\documentclass[reqno]{amsart}
\usepackage[latin1]{inputenc}
\usepackage[T1]{fontenc}
\usepackage{amssymb, amsmath, color, textcomp, url,tikz, mathtools}

\usetikzlibrary{matrix,arrows}
\usetikzlibrary{fit}
\usetikzlibrary{positioning}
\usepackage[labelformat=empty]{caption}
\usepackage{mdwlist}
\usepackage{etoolbox}

\RequirePackage{ifpdf}
\ifpdf
   \usepackage[pdftex]{hyperref}
\else
   \usepackage[hypertex]{hyperref}
\fi

\usepackage{soul}
\usepackage{enumerate,xspace}

\newtheorem*{teoA}{Theorem A}
\newtheorem*{teoB}{Theorem B}
\newtheorem*{teoC}{Theorem C}
\newtheorem*{teoD}{Theorem D}

\theoremstyle{plain}
\newtheorem{theorem}{Theorem}[section]
\newtheorem{cor}[theorem]{Corollary}
\newtheorem{prop}[theorem]{Proposition}
\newtheorem{lemma}[theorem]{Lemma}

\newcounter{claimCount}
\setcounter{claimCount}{0}

\AtBeginEnvironment{proof}{\setcounter{claimCount}{0}}

\theoremstyle{definition}
\newtheorem{remark}[theorem]{Remark}
\newtheorem{fact}[theorem]{Fact}
\newtheorem{definition}[theorem]{Definition}
\newtheorem{example}[theorem]{Example}

\newtheorem*{notation}{Notation}

\newcommand{\nc}{\newcommand}

\nc{\Z}{\mathbb{Z}}
\nc{\Q}{\mathbb{Q}}
\nc{\N}{\mathbb{N}}
\nc{\F}{\mathbb{F}}
\nc{\UU}{\mathbb{U}}
\nc{\C}{\mathbb{C}}

\nc{\PR}{\mathrm{Pr}}
\nc{\SF}{\mathrm{SF}}

\nc{\M}{\mathcal{M}}
\nc\LL{\mathcal L}
\nc\II{\mathcal I}
\renewcommand{\t}{{\tt t}}

\nc{\stt}{\operatorname{St}}
\nc{\stab}{\operatorname{Stab}}
\nc{\GO}[1]{G_{#1}^{00}}
\nc{\Cos}[1]{\operatorname{Cos}(#1)}

\nc{\band}[1]{\bar d_{\mathcal{#1}}}
\nc\BD{\operatorname{BD}}

\nc{\dcl}{\operatorname{dcl}}
\nc{\dclq}{\operatorname{acl^\text{eq}}}

\nc{\acl}{\operatorname{acl}}
\nc{\aclq}{\operatorname{acl^\text{eq}}}
\nc{\nf}[1]{_{\mid {#1}}}
\nc{\restr}[1]{\xspace_{\upharpoonright {#1}}}

\nc{\sbgp}[1]{\langle\xspace {#1}\xspace\rangle}
\nc{\strp}[1]{\langle\xspace {#1}\xspace\rangle_\text{pure}}

\nc\CAN{\operatorname{CB}}
\nc\inv{ ^{-1}}

\nc{\tp}{\operatorname{tp}}
\nc\cb{\operatorname{Cb}}
\nc\U{\operatorname{U}}
\nc{\cf}{\text{cf.\,}}
\nc{\eg}{\text{e.g. }}

\def\Ind#1#2{#1\setbox0=\hbox{$#1x$}\kern\wd0\hbox to
	0pt{\hss$#1\mid$\hss} \lower.9\ht0\hbox to
	0pt{\hss$#1\smile$\hss}\kern\wd0}
\def\Notind#1#2{#1\setbox0=\hbox{$#1x$}\kern\wd0\hbox to
	0pt{\mathchardef\nn="0236\hss$#1\nn$\kern1.4\wd0\hss}\hbox to
	0pt{\hss$#1\mid$\hss}\lower.9\ht0 \hbox to
	0pt{\hss$#1\smile$\hss}\kern\wd0}
\def\ind{\mathop{\mathpalette\Ind{}}}
\def\nind{\mathop{\mathpalette\Notind{}}}

\def\indip{\mathop{\ \ \hbox to 0pt{\hss$\mid^{\hbox to
				0pt{$\scriptstyle P$\hss}}$\hss}
		\lower4pt\hbox to 0pt{\hss$\smile$\hss}\ \ }}
\def\nindip{\mathop{\ \ \hbox to 0pt{\hss$\!\not{\mid}^{\hbox to
				0pt{$\scriptstyle\, P$\hss}}$\hss}
		\lower4pt\hbox to 0pt{\hss$\smile$\hss}\ \ }}

\begin{document}

\title{Supersimplicity and arithmetic progressions}

\date{\today}

\author{ Amador Martin-Pizarro and Daniel Palac\'in}
\address{Mathematisches Institut,
  Albert-Ludwigs-Universit\"at Freiburg, D-79104 Freiburg, Germany}
\email{pizarro@math.uni-freiburg.de}

\address{Departamento de \'Algebra, Geometr\'ia y Topolog\'ia, 	
	Facultad de Ciencias Matem\'aticas, 
		Universidad Complutense de Madrid, Plaza Ciencias 3, 28040, 
		Madrid, Spain}
	\email{dpalacin@ucm.es}
 
\keywords{Model Theory, Supersimplicity, Additive Combinatorics}
\subjclass{03C45}

\begin{abstract}
The main motivation for this article is to explore the connections between the existence of certain combinatorial patterns (as in van der Corputs's theorem on arithmetic progressions of length $3$) with well-known tools and theorems for definable groups in simple theories. 
 In the last sections of this article, we apply our model-theoretic results to bound the number of initial points starting few arithmetic progression of length $3$ in  the structure of the additive group of integers with a predicate for the prime integers, assuming Dickson's conjecture, or with a predicate for the square-free integers, as well as for asymptotic limits of finite fields. Our techniques yield similar results for the elements appearing as distances in skew-corners and for S\'ark\"ozy's theorem on the distance of distinct elements being perfect squares. 
\end{abstract}

\maketitle

\section*{Introduction}

Van der Corput \cite{jC39} proved that the set of primes contains infinitely many arithmetic progressions of length $3$ (in short $3$-AP's), in contrast to Roth's theorem \cite{kR53}, which is only valid for subsets of the natural numbers (or of the integers) of positive upper density. In \cite{bG05}, Green showed that every subset of the primes of positive relative upper density contains infinitely many $3$-AP's. The case of arithmetic progressions of higher length in the primes (akin to Szemer\'edi's theorem \cite{eS75}) was shown by Green and Tao in their celebrated theorem \cite{GT08}. Note that the associated complexity for finding a $k$-AP with $k\ge 4$ is much higher than solving a single $3$-AP, or equivalently, solving the equation $x+z=2y$ with all entries in the subset of interest. 

The work of Tao and Ziegler \cite[Corollary 1.6]{TZ23} yields with a minor  adaption, privately communicated, that the subset of the prime integers $\pm p$, with $p$ a prime number in $\N$, is \emph{unstable} in the model-theoretic sense (see also \cite[Proposition 3.6]{KS17}). Recall that a subset $X$ of $\Z$ is unstable if there is an infinite sequence $(a_n, b_n)_{n\in \N}$ of integers such that $a_n+b_m$ belongs to $X$ if and only if $n < m$. Stable subsets of a group have received consideration in the last years, by a fruitful adaptation of some of the ideas of \emph{local stability} developed by Hrushovski and Pillay \cite{HP94} in order to treat problems of combinatorial interests (see among others \cite{TW19, CPT20, gC21, MPPW21}, as well as \cite{CP24} in an analytic context). Despite the fact that the prime integers are not stable, Kaplan and Shelah \cite{KS17} showed that the theory of the structure $(\Z, 0, +, \PR)$ with a distinguished predicate $\PR$ for all prime integers is simple, assuming Dickson's conjecture. Simple theories, introduced by Shelah \cite{sS80}, provide a suitable framework to extend many of the techniques and tools from stability. A fundamental difference between stability and simplicity is that the latter cannot be characterised in terms of a local behaviour, which may explain why it may seem less suitable for possible applications of model theory to additive combinatorics. 

In \cite[Theorem 3.14]{MPP24}, several of the ideas in Hrushovski's \emph{stabilizer theorem} \cite{eH12} were adapted in order to provide a non-quantitative version of Roth's theorem for finite abelian groups of odd order. The main tool of the proof was an adaptation of a classical result due to Pillay, Scanlon and Wagner \cite{PSW98} on the behaviour of three \emph{coset-compatible} generic types in a group definable in a simple theory, see Fact \ref{F:pqr}. One of the main motivations for this article is to explore the connection between the existence of certain combinatorial patterns (as in van der Corputs's theorem) with the result of three authors above, under the global assumption that the structure we work with has a simple theory. 
During the CIRM conference in November 2024, we noticed that a similar interaction between the above two topics is also present in Pillay and Stonestrom's \emph{arithmetic regularity lemma} \cite{PS24} for finite fields, whose asymptotic theory is \emph{supersimple theory of rank $1$}. Recall that a first-order theory $T$ is supersimple theory of rank $1$ if in no model $M$ of $T$ there is a definable bipartite relation $R\subset M\times M^k$ and a sequence $(\bar b_n)_{n\in \N}$ in $M^k$ such that each fiber $R_{\bar b_n}$ is infinite yet any two distinct fibers $R_{\bar b_n}$ and $R_{\bar b_m}$ have finite intersection. 

Our main purely model-theoretic result is the following statement. 

\begin{teoA}\textup{(}\cf Theorem \ref{T:Main}\textup{)}~
Consider a definable subset $X$ of an abelian group $G$ without involutions, all definable in a supersimple theory of rank $1$.  The  subset 
\[ \left\{x \in X\ | \ \text{ there are only finitely many $g$ in $G$ with $x$, $x+ g$ and $x+2g$ in $X$}\right\} 
\] is definable and finite. In particular, if $X$ is infinite, all but finitely many elements $a$ in $X$ are initial points of infinitely many $3$-AP's in $X$. 
\end{teoA}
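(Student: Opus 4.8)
The plan is to argue by contradiction: extract a single generic point of the bad set, feed the $3$-term configuration it determines into Fact \ref{F:pqr}, and read off an impossibility. If $X$ is finite, the set in the statement --- call it $Y$ --- is a subset of $X$ and there is nothing to prove, so assume $X$ is infinite. Since the ambient theory has $\operatorname{SU}$-rank one we have $\operatorname{SU}(X)=1$, and --- what will matter below --- every definable subset of $G$, in particular $2X-X$ and $X-X$, has $\operatorname{SU}$-rank at most one. Work inside a monster model and fix a small model $M$ carrying the parameters of $X$ and $G$, so that $Y$ is $M$-definable. Suppose $Y$ is infinite; then $\operatorname{SU}(Y)=1$, so we may choose $a\in Y$ with $\operatorname{SU}(a/M)=1$, and we put $p:=\tp(a/M)$. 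As $a\in X$ and $\operatorname{SU}(a/M)=1=\operatorname{SU}(X)$, the type $p$ is a generic type of $X$.

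Next I would produce the configuration. Let $y$ realise the nonforking extension of $p$ over $Ma$, so that $y\models p$, $y\ind_M a$ and $\operatorname{SU}(y/Ma)=1$; in particular $y\in X$ and $y\notin\acl(Ma)$. Since $a\in Y$, the $Ma$-definable set $\{y'\in X : 2y'-a\in X\}$ is finite, hence contained in $\acl(Ma)$, so $y$ does not belong to it; as $y\in X$ this forces $z:=2y-a\notin X$. I regard this implication --- that membership of $a$ in $Y$ is exactly what ejects $z$ from $X$ --- as the one non-routine idea in the proof. Now set $g:=y-a$, so $y=a+g$ and $z=a+2g$. Because $G$ has no involutions, each of $a,g,y,z$ lies in $\dcl(M,\cdot)$ of any two of the others, and $g$ is interdefinable with $2g$ over $M$; combining this with $\operatorname{SU}(2X-X)\le 1$, $\operatorname{SU}(X-X)\le 1$ and $\operatorname{SU}(a/M)=\operatorname{SU}(y/Ma)=1$, the Lascar inequalities yield $\operatorname{SU}(g/M)=1$ and that $(a,g,y)$ is pairwise independent over $M$, with $a+g=y$.

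Now apply Fact \ref{F:pqr} to the types $\tp(a/M)=p$, $\tp(g/M)$ and $\tp(y/M)=p$ realised by $(a,g,y)$: it yields a type-definable subgroup $S$ with $S=\stab(p)=\stab(\tp(g/M))$ such that $p$ is the generic type of the coset $a+S$ and also of the coset $y+S$. Since a consistent type lies on a single coset of $S$, these cosets coincide, whence $g=y-a\in S=\stab(p)$, and therefore $2g\in S$ too. One more Lascar computation, again using the absence of involutions, gives $2g\ind_M a$, so the defining property of $\stab(p)$ yields $a+2g\models p$, and hence $a+2g\in X$. But $a+2g=2y-a=z\notin X$: contradiction. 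Therefore $Y$ is finite. The last sentence of the statement follows at once: if $X$ is infinite then $X\setminus Y$ is cofinite in $X$, hence nonempty, and each of its elements $a$ admits infinitely many $g\in G$ with $a,a+g,a+2g\in X$, distinct $g$'s giving distinct (and, except for $g=0$, nonconstant) $3$-AP's in $X$.

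The step I expect to demand the most care is checking the hypothesis of Fact \ref{F:pqr}, namely the pairwise $M$-independence of $(a,g,y)$; this is precisely the point where the rank-one assumption is used, to force $\operatorname{SU}(z/M)$ and $\operatorname{SU}(g/M)$ to be $1$ rather than $2$ --- equivalently, to prevent $a\in\acl(Mz)$. It cannot be dropped: already for a pseudo-finite field $\F$ of characteristic $\neq 2$ and $X$ the parabola $\{(t,t^2):t\in\F\}\subseteq\F^2$, the only $g$ with $a,a+g,a+2g\in X$ is $g=0$ for every $a\in X$, so the analogous $Y$ equals $X$ and is infinite. Past this verification the argument is essentially bookkeeping with the interdefinabilities granted by ``$G$ has no involutions'', together with the single appeal to Fact \ref{F:pqr}.
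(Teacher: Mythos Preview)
There is a genuine gap at the invocation of Fact~\ref{F:pqr}. That fact does not output a stabiliser: its hypothesis is three generic types $p,q,r$ over $M$ with $\Cos p + \Cos q = \Cos r$ in $G/\GO M$, and its conclusion is the \emph{existence} of realisations $a'\models p$ and $b'\models q$ with $a'\ind_M b'$ and $a'+b'\models r$. Applied to your triple $(p,\tp(g/M),p)$ it merely reproduces a configuration you already possess; it does not deliver a subgroup $S$, let alone an identification $S=\stab(p)$, and the paper never uses $\stab(p)$.

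The step that actually fails is the final implication: from $2g\in\GO M$ (which you do get for free, since $a$ and $y$ lie in the same $\GO M$-coset) and $2g\ind_M a$, you infer $a+2g\models p$. This is stationarity of $p$, and it is false in simple unstable theories. Fact~\ref{F:G00}(c) only guarantees that \emph{some} realisation of $p$ generic over $M,2g$ translates by $2g$ into a realisation of $p$; there is no reason your particular $a$ should work. Over a pseudo-finite field, for instance, there are in general many $2$-types of independent pairs $(a,y)$ of realisations of a fixed generic $p$, and they assign different types to $2y-a$; nothing forces that type to be $p$, or even to lie in $X$.

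The paper runs Fact~\ref{F:pqr} in the intended direction, applied to the triple $p,p,r$ where $r=\tp(2a/M)$ for $a\models p$ (generic since $G$ has no involutions, by Remark~\ref{R:gen_squares}(b)). This \emph{produces} independent $a',c'\models p$ with $a'+c'\models r$, so $a'+c'=2b'$ for some $b'\models p$; setting $g=b'-a'$ gives a $3$-AP $(a',b',c')=(a',a'+g,a'+2g)$ lying entirely in $X$, and a short computation shows $g\notin\acl(M,a')$. Since $a'\models p$ lies in $\mathrm{B}(X)$, this contradicts the defining property of $\mathrm{B}(X)$. The essential difference from your attempt is that Fact~\ref{F:pqr} is allowed to \emph{choose} the right pair $(a',c')$; one cannot fix $a$ and $y$ first and expect $2y-a$ to land in $X$.
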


As mentioned before, assuming Dickson's conjecture, Kaplan and Shelah showed that the theory of the abelian group $(\Z, 0, +, \PR)$  is supersimple of rank $1$. A key step in their proof is a partial quantifier elimination result, so that the definable sets are exactly the \emph{constructible} sets (\cf Definition \ref{D:basic_subset}) given by finite unions of subsets of the form 
\[ \left\{ \bar z\in \Z^m \ | \  \Sigma(\bar z) \text{ holds with }  \bigwedge\nolimits_{i,k}  t_{i,k}(\bar z) \in \PR_k^{\epsilon_{i, k}}   \right\}, 
\] for a finite collection of affine-linear forms $t_{i, k}(\bar z)$ with integer coefficients, a finite system of congruences $\Sigma(\bar z)$ involving some of the above forms, as well as  some $\epsilon_{i,k}$'s in $\{-1,1\}$, with the convention that $\PR_k^{-1}=\Z\setminus \PR_k$ and $\PR_k=\{n\in k\Z \ | \ n/k \in \PR\}$. The \emph{complexity} of a constructible set is the minimal length of a formula defining it. 

In the particular structure $(\Z, 0, +, \PR)$, Theorem A above yields the following:
\begin{teoB}\textup{(}\cf Corollary \ref{C:Main}\textup{)}~
Assume Dickson's conjecture. For every $k\ge 1$ and every infinite constructible set $X$, all but finitely many elements in $X$ are initial points of infinitely many arithmetic progressions in $X$ of length $3$ with distance divisible by $k$.

More precisely, given a complexity $C>0$ and an integer $k\ge 1$, there are constants $M$ and $N$ in $\N$ such that for every constructible subset $X(x;\bar y)$ in the structure $(\Z, 0, +, \PR)$ of complexity at most $C$ and every choice $\bar b$ of parameters, the set 
\[ \left\{a \in X(x, \bar b) \ | \ \exists^{\le M} g  \text{ in  $k\Z$  with $a$, $a+ g$ and $a+2g$ in $X(x, \bar b)$}\right\} 
\]
is finite of size at most $N$. Moreover, if there are at least $M+1$ many $g$'s in $k\Z$ for the element $a$ in $X(x, \bar b)$, then there are infinitely many such $g$'s. 
\end{teoB}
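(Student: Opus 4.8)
The plan is to derive Theorem~B from Theorem~A in two steps: a coset decomposition trading $3$-AP's of distance in $k\Z$ for ordinary $3$-AP's in a torsion-free group, which yields the qualitative statement; and then a compactness argument, fed by the elimination of $\exists^\infty$, to produce the uniform constants $M$ and $N$. Throughout write $T=\operatorname{Th}(\Z,0,+,\PR)$, which is supersimple of rank~$1$ by Kaplan--Shelah under Dickson's conjecture.

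\emph{Coset decomposition.} Fix $k\ge 1$ and decompose $X=\bigsqcup_{r=0}^{k-1}X_r$ with $X_r=X\cap(r+k\Z)$, each $X_r$ again constructible. The map $\iota_r\colon\Z\to\Z$, $n\mapsto r+kn$, is a definable injection with image $r+k\Z$, so $\widetilde X_r:=\{\,n\in\Z : r+kn\in X\,\}$ is constructible. Writing $g=kh$, a triple $(a,a+g,a+2g)$ with $a\in X_r$ and $g\in k\Z$ is a $3$-AP inside $X$ precisely when $\bigl(\iota_r^{-1}(a),\,\iota_r^{-1}(a)+h,\,\iota_r^{-1}(a)+2h\bigr)$ is a $3$-AP inside $\widetilde X_r$, and distinct $g$'s correspond to distinct $h$'s; in particular the two sets of witnessing parameters are in definable bijection, one being infinite iff the other is. Since $(\Z,+)$ is torsion-free, hence has no involutions, Theorem~A applies to $\widetilde X_r\subseteq(\Z,+)$ inside $(\Z,0,+,\PR)$: all but finitely many points of $\widetilde X_r$ start infinitely many $3$-AP's inside $\widetilde X_r$. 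Pushing this back along $\iota_r$ and unioning over the $k$ residue classes gives the first assertion of Theorem~B. This argument does not use anything particular to $\Z$: for any $\mathcal M\models T$ the group $(\mathcal M,+)$ is $2$-torsion-free, the cosets $r+k\mathcal M$ partition $\mathcal M$ into $k$ pieces, and $\iota_r$ remains a definable injection, so the same conclusion holds in $\mathcal M$.

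\emph{Uniform constants.} Bounding the complexity by $C$ (which in particular bounds the moduli and coefficients occurring) leaves only finitely many constructible formulas $X(x;\bar y)$; it suffices to establish the bounds for each of them, so fix one, say $\Phi(x;\bar y)$. Let $\psi(g;a,\bar y)$ be the formula ``$g\in k\Z$, and each of $a$, $a+g$, $a+2g$ satisfies $\Phi(\,\cdot\,;\bar y)$'', so that the admissible distances for $a$ form the definable set $\psi(\mathcal M;a,\bar b)$. Because $T$ is supersimple of finite rank it eliminates $\exists^\infty$, and applied to $\psi$ this gives an integer $M$ with
\[
T\vdash\forall a\,\forall\bar y\;\bigl(\exists^{>M}\,g\;\psi(g;a,\bar y)\ \longrightarrow\ \exists^{\infty}\,g\;\psi(g;a,\bar y)\bigr),
\]
which in $\Z$ is precisely the ``moreover'' clause of Theorem~B. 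With this $M$ fixed, the set $B(\bar b)=\{\,a\in\Phi(\mathcal M;\bar b) : \exists^{\le M}\,g\;\psi(g;a,\bar b)\,\}$ is defined by a single formula $\theta(a;\bar y)$, and running the coset decomposition inside $\mathcal M$ exhibits it as a subset of the finite set of points of $\Phi(\mathcal M;\bar b)$ having only finitely many distances in $k\mathcal M$; hence $\theta(\mathcal M;\bar b)$ is finite for every $\mathcal M\models T$ and every $\bar b$. A standard compactness argument now gives a uniform bound: were there none, $T$ together with $\{\,\exists^{\ge n}a\;\theta(a;\bar y) : n\in\N\,\}$ in new constants $\bar y$ would be finitely satisfiable, producing a model with an infinite bad set. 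So some integer $N$ satisfies $|\theta(\mathcal M;\bar b)|\le N$ for all $\mathcal M\models T$ and all $\bar b$, and specializing to $\mathcal M=\Z$ proves Theorem~B.

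The only external input, and the point I expect to need the most care, is the elimination of $\exists^\infty$ for $T$: this can be seen either as an instance of the general behaviour of supersimple theories of finite $SU$-rank, or directly from the Kaplan--Shelah description of constructible sets together with Dickson's conjecture, which forces a one-variable constructible set to be infinite unless it is finite for a reason controlled by its complexity. The remaining ingredients — constructibility of $\widetilde X_r$, reduction to a single formula of bounded complexity, and the compactness argument — are routine once Theorem~A is available.
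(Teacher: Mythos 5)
Your proof is correct, and the scheme for extracting the uniform constants (finitely many formulas of complexity at most $C$, elimination of $\exists^\infty$ from Lemma~\ref{L:Existsinfty} applied once to the set of distances to get $M$ together with the `moreover' clause, and once more --- equivalently, your compactness argument --- to the definable bad set $\theta$ to get $N$) is exactly what the paper carries out in the proof of Corollary~\ref{C:Sarkozy_primes} and tacitly reuses for Corollary~\ref{C:Main}. Where you genuinely diverge is in how the restriction to distances in $k\Z$ is obtained. The paper's Theorem~\ref{T:Main} is already stated for an arbitrary definable subgroup $H$ of finite index: the witness $g$ constructed there is the difference of two realizations of one and the same type over a model $M$, hence lies in $G^{00}_M$, which is contained in every definable finite-index subgroup, so Corollary~\ref{C:Main} follows by taking $H=k\Z$ with no decomposition of $X$. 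You instead use only the introduction's Theorem~A, with distances ranging over the whole group, and recover the congruence condition by splitting $X$ into residue classes mod $k$ and rescaling along $\iota_r$; this is a correct and elementary substitute, needing only Fact~\ref{F:basics_qe}(a) to see that $\widetilde X_r$ is again constructible. What the paper's formulation buys is that no decomposition is needed and the result holds for any definable finite-index subgroup; what yours buys is independence from the refined form of Theorem~\ref{T:Main}. Finally, your claim that $\theta(\mathcal M;\bar b)$ is finite in every model of $T$ does require passing to a saturated elementary extension in order to apply Theorem~\ref{T:Main}, but since $\theta$ is a single formula (thanks to elimination of $\exists^\infty$) this transfer is immediate, as you indicate.
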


The same statement holds for  $(\Z, 0, +, \SF)$ with a distinguished predicate $\SF$ for all square-free integers, without assuming Dickson's conjecture. At the moment of writing, even if the element $a$ belongs to $X(\N,\bar b)=X(x,\bar b)\cap \N$, we do not know whether there are infinitely many $g$'s in $\N$ (or any $g$ at all, for that matter!)  as in the above theorem. Our techniques cannot be applied directly, since $\N$ cannot be definable in any of the supersimple structures $(\Z, 0, +, \SF)$ or $(\Z, 0, +, \PR)$. Indeed,  the monoid structure on $\N$ canonically defines the linear order of $\N$, contradicting the elimination of $\exists^\infty$ (see Lemma \ref{L:Existsinfty}). 

As mentioned before, a classical class of structures whose theories are supersimple of rank $1$ are \emph{pseudo-finite fields}, that is, non-principal ultraproducts of finite fields, or rather fields elementarily equivalent to such ultraproducts, in the language of rings \cite{jA68,CDM92}. The core idea at the work of Chatzidakis, van den Dries and Macintyre \cite[Main Theorem]{CDM92} was to generalize the Lang-Weil estimates for varieties to arbitrary uniformly definable sets over finite fields. Their work has played an important role in Tao's algebraic arithmetic lemma \cite{tT15} (see also \cite{tTblog}), as well as in the recent work of Boege \cite{tB25} on the entropic behaviour of definable sets over finite fields. The Main Theorem of \cite{CDM92} roughly states the following: For every complexity $C>0$, there are finitely many values $(d, \mu)$ with $d$ in $\N$ and $\mu$ in $\Q^{>0}$ such that for every finite field $k$ and every non-empty definable subset $X\subset k^n$ of complexity at most $C$ there is a pair $(d, \mu)$ with $0\le d\le n$ and $$|X|=\mu |k|^d + O_C\big(|k|^{d-\frac{1}{2}}\big).$$ Moreover, the above estimate yields a dichotomy for the cardinality of definable sets within a uniformly definable family $(Y_{\bar b})$ of $k$ of complexity bounded by $C$, that is, there are constants $\ell_C$ in $\N$ and $0<\delta_C\le 1$ such that either $|Y_{\bar b}|\le \ell_C$ or its density $|Y_{\bar b}|/|k|$ is bounded from below by $\delta_C$. 

In this context, our Theorem A translates as follows:

\begin{teoC}\textup{(}\cf Theorem \ref{T:Roth_fte}\textup{)}~ 
Given a complexity $C>0$ and natural numbers $r$ and $s$, there is a constant $t=t(C, r, s)$ in $\N$ such that for every finite field $k$, every definable additive subgroup $H$ of $k$ of index at most $s$ and every definable subset $X$ of $k$, if both $X$ and $H$ have complexity at most $C$, then the set of \emph{bad} initial points
\[ 
\left\{x\in X\ | \ \exists^{\le r} g \text{ in $H$ with $x, x+g$ and $x+2g$ in $X$} \right\}
\] 
has size at most $t$.  Moreover, there are two constants $\eta=\eta(C,s)>0$ in $\N$ and $\delta=\delta(C)>0$ such that whenever $|X|\ge \eta$, the subset  
\[ 
\left\{x\in X\ | \ \exists^{\ge  \delta|k|} g  \text{ in $H$ with $x, x+g$ and $x+2g$ in $X$} \right\}
\] has cardinality at least $\delta |k|$. 
\end{teoC}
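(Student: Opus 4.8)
The plan is to transfer the statement from finite fields to their non-principal ultraproducts, which are pseudo-finite fields and hence fall within the scope of Theorem~A, extracting the \emph{uniform} constants by a compactness argument for the qualitative part and by the Lang--Weil type dichotomy of \cite{CDM92} recalled above for the quantitative part. It is worth stressing that the mere existence of $r_C$ and $t_C$ needs no quantitative input at all: only the conclusion of Theorem~A together with the fundamental theorem of ultraproducts.

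For the first assertion I would argue by contradiction. Writing $B_r(Y):=\{x\in Y : \exists^{\le r} g\,(x,x+g,x+2g\in Y)\}$, if the assertion fails for some complexity $C$ then, fixing an arbitrary $r\ge 1$, for every $t\in\N$ there is a finite field $k_t$ and a definable $X_t\subseteq k_t$ of complexity at most $C$ with $|B_r(X_t)|>t$. Since $B_r(X_t)\subseteq X_t\subseteq k_t$ this already forces $|k_t|>t$, so $|k_t|\to\infty$; and since there are only finitely many formulas of complexity at most $C$ in the language of rings, after re-indexing we may assume $X_t=\varphi(k_t;\bar b_t)$ for a single formula $\varphi(x;\bar y)$ and parameters $\bar b_t\in k_t^{|\bar y|}$. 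Fixing a non-principal ultrafilter $\mathcal U$ on $\N$, the ultraproduct $K:=\prod_t k_t/\mathcal U$ is an infinite pseudo-finite field, so $\mathrm{Th}(K)$ is supersimple of rank $1$; with $\bar b:=[(\bar b_t)]_{\mathcal U}$ and $X:=\varphi(K;\bar b)=\prod_t X_t/\mathcal U$, Theorem~A applies to $X\subseteq(K,+)$ as soon as $\mathrm{char}(K)\ne 2$, and when $\mathrm{char}(K)=2$ one checks directly that the relevant set is finite (indeed empty if $X$ is infinite, since there $x,x+g,x+2g\in X$ just says $x,x+g\in X$). Either way $B(X):=\{x\in X : \text{only finitely many }g\in K\text{ with }x,x+g,x+2g\in X\}$ is finite. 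On the other hand, ``$x\in X$ and at most $r$ elements $g$ satisfy $x,x+g,x+2g\in X$'' is a first-order property, so transfer gives $B_r(X)=\prod_t B_r(X_t)/\mathcal U$, which is infinite because $|B_r(X_t)|>t\to\infty$; as $B_r(X)\subseteq B(X)$ this is the sought contradiction. Hence one may take $r_C:=r$ and $t_C:=\sup\{|B_r(X)| : X\text{ of complexity}\le C\}<\infty$.

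For the quantitative refinement I would invoke the dichotomy of \cite{CDM92} twice. Applied to the uniformly definable family of fibres $D_X(a):=\{g\in k : a,a+g,a+2g\in X\}$, which has complexity bounded in terms of $C$, it yields constants $\ell_1$ and $\delta_1>0$ such that for every finite field $k$, every $X$ of complexity at most $C$ and every $a\in X$ one has $|D_X(a)|\le\ell_1$ or $|D_X(a)|\ge\delta_1|k|$; taking $r:=\max(\ell_1,1)$ in the previous paragraph, every $a\in X\setminus B_{r_C}(X)$ then starts at least $\delta_1|k|$ arithmetic progressions of length $3$ in $X$. Applied now to the family $\{X\}$ itself, the dichotomy yields $\ell_2,\delta_2>0$ with $|X|\le\ell_2$ or $|X|\ge\delta_2|k|$; enlarging $t_C$ to $\max(t_C,\ell_2)$ (which keeps $|B_{r_C}(X)|\le t_C$), the hypothesis $|X|\ge 2t_C+1$ rules out the first case, so $|X|\ge\delta_2|k|$ and therefore $|X\setminus B_{r_C}(X)|\ge|X|-t_C\ge|X|/2\ge(\delta_2/2)|k|$. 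With $\delta:=\min(\delta_1,\delta_2/2)$ this establishes the second half of the statement. The single point that really requires care --- and where both inputs enter --- is reconciling the finitary clause ``at most $r_C$ values of $g$'' in the statement with the qualitative clause ``finitely many $g$'' to which Theorem~A speaks: the existence of $r_C,t_C$ comes from this by pure compactness, while the uniform densities $\delta|k|$ are exactly what the estimates of \cite{CDM92} provide; the arithmetic merging the various constants is routine.
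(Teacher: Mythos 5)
Your proposal is correct and follows essentially the same route as the paper: a non-principal ultraproduct of putative counterexamples reduces the uniform finiteness of the bad set to Theorem~\ref{T:Main} applied to the pseudo-finite field (with the characteristic-$2$ case disposed of separately), while the dichotomy of Fact~\ref{F:CDM}~(c), applied both to the fibres $\{g : x, x+g, x+2g \in X\}$ and to a set of complexity $O_C(1)$ containing $X\setminus \mathrm{B}_{r_C}(X)$, supplies the thresholds $r_C$, $t_C$ and the density $\delta$. The only differences from the paper's argument are bookkeeping choices (e.g.\ extracting $t_C$ by pure compactness for an arbitrary $r$ rather than reading it off as $\ell_{D_1}$, and bounding $|X\setminus \mathrm{B}_{r_C}(X)|$ via the dichotomy for $X$ itself rather than for the formula $\chi$), none of which affects the substance.
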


We conclude this introduction by noticing that similar techniques apply for other combinatorial patterns. As mentioned at the beginning, stability has played an important role in problems of additive combinatorics from a model-theoretic perspective. For instance, in previous joint work with Wolf \cite{MPPW24}, we studied the presence of {\em corners} and {\em squares} for a stable $2$-dimensional relation in a group. Our proof heavily relies on the \emph{stationarity principle} which characterizes stability. In simple (unstable) theories, stationarity does no longer hold, so the techniques from \cite{MPPW24} do not immediately apply. However, in the particular case of pseudo-finite fields of characteristic $0$, some of the model-theoretic methods can be easily adapted to yield the following result on \emph{skew-corners} for $2$-dimensional definable sets in finite fields (\cf \cite{kP24, JLO24, lM24}). 

\begin{teoD}\textup{(}\cf Theorem \ref{T:Skew_fte}\textup{)}~ 
Given a complexity $C>0$, there are constants $n_0$ and $t_C$ in $\N$ as well as  $\delta_C>0$ and $\delta'_C>0$ in $\Q$ such that for every finite field $k$ of characteristic at least $n_0$ and  every definable subset $S$ of $k^2$ of complexity at most $C$ with $|S|\ge \delta_C |k|^2$,  all but $t_C$ many points $g$ in $k$ satisfy that $|\mathcal X_g|\ge \delta'_C|k|^3 $, where $\mathcal X_g$ is the set of \emph{symmetric skew-corners} induced by $g$
in $S$: \[ \mathcal X_g =\{(x, y, y') \in k^3 \ | \ (x, y), (x, y'), (x+g, y) \text{ and } (x, y'+g) \text{ all lie in } S\}. \] 
\end{teoD}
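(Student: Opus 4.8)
The plan is to deploy the machinery behind Theorems~A--C in a two-dimensional situation: pass to an ultraproduct that is a pseudo-finite field of characteristic $0$, establish there a statement about $\operatorname{SU}$-ranks, and descend by compactness together with the Chatzidakis--van den Dries--Macintyre (CDM) estimates.

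\emph{Transfer.} Assume the statement fails and fix $\delta_C$ to be the CDM dichotomy constant attached to complexity $C$. Then for every $m$ there are a finite field $k_m$ with $\operatorname{char}(k_m)\to\infty$ and a definable $S_m\subseteq k_m^2$ of complexity at most $C$ with $|S_m|\ge\delta_C|k_m|^2$ but with more than $m$ values of $g$ for which $|\mathcal X_g|$ is too small. Let $K=\prod_m k_m/\mathcal U$ be a non-principal ultraproduct; it is a pseudo-finite field of characteristic $0$, so $(K,+)$ and $(K^2,+)$ are torsion-free -- in particular without involutions -- and their only definable subgroups are the evident ones ($\{0\}$, the whole group, and in the second case the $1$-dimensional lines), the characteristic hypothesis $\operatorname{char}\ge n_0$ being used precisely to push the ultraproduct into characteristic $0$ and thereby forbid an additive subgroup descending from a definable prime field. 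The theory of $K$ is supersimple of $\operatorname{SU}$-rank $1$ and $K$ carries the CDM dimension $\dim$ and measure $\mu$. The $S_m$'s glue to a definable $S\subseteq K^2$ over a finite tuple $\bar b$ with $\mu(S)\ge\delta_C$, hence $\dim S=2$, and the bad set $B=\{g\in K:\dim\mathcal X_g\le 2\}$ is infinite (by the CDM dichotomy for the uniformly definable family $(\mathcal X_g)_g$, each $\mathcal X_g$ is either $3$-dimensional of density bounded below or of dimension at most $2$). Thus it is enough to prove: \emph{if $S\subseteq K^2$ is definable over $\bar b$ with $\dim S=2$, then $\{g\in K:\dim\mathcal X_g\le 2\}$ is finite.}

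\emph{Decomposition.} Writing $S_a=\{b:(a,b)\in S\}$, one has $\mathcal X_g=\{(x,y,y'):y\in S_x\cap S_{x+g}\text{ and }y'\in S_x\cap(S_x-g)\}$, so $\dim\mathcal X_g=3$ exactly when $G_g:=\{x:\dim(S_x\cap S_{x+g})=\dim(S_x\cap(S_x-g))=1\}$ has dimension $1$. In that case, picking $x$ generic in $G_g$ over $g\bar b$, then $y$ generic in $S_x\cap S_{x+g}$ over $xg\bar b$, then $y'$ generic in $S_x\cap(S_x-g)$ over $xyg\bar b$, the Lascar inequalities give an element of $\mathcal X_g$ of $\operatorname{SU}$-rank $1+1+1=3$ over $g\bar b$. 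Since $B$ is $\bar b$-definable and $K$ has $\operatorname{SU}$-rank $1$, showing $B$ finite is the same as showing $\{g\in K:\dim G_g=1\}$ cofinite -- and a cofinite definable subset of $K$ contains every element of full rank over its parameters, which is exactly how the rank computation above is used.

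\emph{The main obstacle.} Everything therefore reduces to: for definable $S\subseteq K^2$ with $\dim S=2$, the set of $g$ with $\dim G_g=1$ is cofinite in $K$. The difficulty -- the same one faced in Theorem~A -- is that a bare dimension count only places such $g$ in a \emph{generic}, i.e. dimension-$1$, set, whereas in $(K,+)$ this is strictly weaker than being cofinite (e.g. the squares are $1$-dimensional yet very far from cofinite). The passage from ``generic'' to ``cofinite'' rests on the supersimplicity of $K$ together with Fact~\ref{F:pqr}: one first notes that a $1$-dimensional definable $T\subseteq K$ has $T\pm T$ cofinite -- the definable group $\langle T-T\rangle$ is nontrivial by the chain conditions, hence all of $K$, and pseudo-finiteness yields the two-step bound -- so that for $g$ outside a finite set $g$ is a ``popular difference'' of $S_x$, and $S_x$ meets $S_{x+g}$ in dimension $1$, for a $1$-dimensional set of $x$; and then, applying Fact~\ref{F:pqr} to the coset-compatible generic types arising from the difference relation $\{(u,v):u\in S,\ u+v\in S\}$, the residual degeneracies are confined to cosets of definable subgroups of $(K,+)$ or $(K^2,+)$, which -- the subgroups being trivial, the whole group, or a line -- one then rules out for the two directions $(g,0)$ and $(0,g)$. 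Carrying this through, and checking that the finitely many $g$'s discarded at the various sub-steps together exhaust $B$, is the heart of the proof.

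\emph{Descent.} Finally, $\dim\mathcal X_g=3$ for all but finitely many $g$ transfers back, via the CDM dichotomy for the family $(\mathcal X_g)_g$, to $|\mathcal X_g|\ge\delta'_C|k|^3$ for every $g$ outside a set of size at most some $t_C$, uniformly over finite fields $k$ of characteristic at least some $n_0$; the constants $n_0$, $t_C$, $\delta_C$, $\delta'_C$ emerge from this compactness argument just as Theorem~C is obtained from Theorem~A.
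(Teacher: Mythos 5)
Your overall architecture --- ultraproduct transfer, an infinitary dimension statement over the resulting pseudo-finite field of characteristic $0$, and descent via the Chatzidakis--van den Dries--Macintyre dichotomy --- is exactly the paper's (Proposition \ref{P:skew} plus Theorem \ref{T:Skew_fte}), and your transfer and descent paragraphs are essentially correct. The problem is that the infinitary statement itself is not proved. You explicitly defer it: ``Carrying this through \dots is the heart of the proof.'' What you offer in its place (fiber decomposition into $S_x\cap S_{x+g}$ and $S_x\cap(S_x-g)$, cofiniteness of $T\pm T$, popular differences, Fact \ref{F:pqr} applied to a difference relation, and ``ruling out'' cosets of definable subgroups of $(K^2,+)$) is a sketch of a different and much less developed route; as written it does not establish that the bad set of $g$'s is finite, and the claim about the classification of definable subgroups of $(K^2,+)$ is asserted rather than justified. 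The paper's actual engine is Fact \ref{F:G00}~(c): for a generic type $p$ over $A$ and a generic element $h$ of $\GO{A}$, there is a realization $x$ of $p$, generic over $A,h$, with $h+x$ again realizing $p$. Applied first to $\tp(x_1/M,y)$ with $h=g$ (giving $(x,y)$ and $(x+g,y)$ in $S$ with $\dim(x,y/M,g)=2$) and then to $\tp(y/M,x)$ (giving $(x,y')$ and $(x,y'+g)$ in $S$ with $y'$ generic over $M,x,y,g$), it produces a point of $\mathcal X_g$ of dimension $3$ over $M,g$ for \emph{any} $g$ generic over $M$. That two-step generic-translation argument is precisely the missing heart, and none of the machinery you invoke substitutes for it.

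You also somewhat misdiagnose the obstacle. The passage from ``no generic $g$ is bad'' to ``the bad set is finite'' is automatic here: the bad set $\{g : \dim(\mathcal X_g)<3\}$ is definable over the parameters (Remark \ref{R:dim}~(c)), and a definable subset of a rank-$1$ structure containing no element generic over its parameters is algebraic, hence finite by Lemma \ref{L:Existsinfty}. So there is no separate ``generic to cofinite'' step to worry about; the entire difficulty sits in producing the dimension-$3$ witness for a single generic $g$, which is where Fact \ref{F:G00}~(c) (not Fact \ref{F:pqr}) does the work. One further small point: the hypothesis on the characteristic is not only used to force the ultraproduct into characteristic $0$; in the paper's finitary statement the polynomials $P_1,P_2$ must preserve the connected component generically, which in your symmetric case $P_1=P_2=\mathrm{Id}$ is harmless, but your parenthetical classification of definable subgroups is not needed and should be dropped rather than left unproved.
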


\subsection*{Acknowledgments} The second author conducted
research supported by project STRANO PID2021-122752NB-I00 and Grupos UCM 910444. We would like to thank Anton Bernshteyn and Yifan Jing for their helpful comments and suggestions, which led us to improve a previous version of Theorem C. We would also like to thank the anonymous referee for the helpful comments and remarks. 

\section{(Super-)Simplicity}\label{S:SS}
All throughout this work, we will assume a familiarity with (geometric) model theory, basically the first five chapters of \cite{TZ12}. More advanced references for simplicity are \cite{eC11} and \cite{fW00}. For the sake of the presentation, we will attempt to provide a more self-contained exposition, choosing as definitions some equivalent formulations of existing notions instead of the classical ones. 

We consider a first-order complete theory $T$ in some fixed language $\LL$ and work in a sufficiently saturated (infinite) model $\mathbb U$ of $T$. All tuples and subsets are taken inside $\UU$, unless explicitly stated. In this section, finite tuples will be denoted by $a$, $b$, or $x$, $y$, and need not be singletons, unless stated so. We identify a formula $\varphi(x, b)$ with the underlying definable set 
\[ 
\varphi(\UU, b)=\{a \in \UU^{|x|} \ | \ \varphi(a,b) \text{ holds}\}.
\]

We first recall Shelah's definitions of dividing and forking \cite{sS90}.

\begin{definition}\label{D:forking}
A formula $\varphi(x,b)$ \emph{divides} over the subset $C$ of $\UU$ if there exists a $C$-indiscernible sequence $(b_n)_{n\in \N}$ with $b_0=b$ such that the set of formulae $\{\varphi(x, b_n)\}_{n\in \N}$ is inconsistent. 

A formula $\varphi(x, b)$ \emph{forks} over $C$ if it implies a finite disjunction of formulae $\psi_1(x, d_1), \ldots, \psi_n(x, d_n)$, with $d_1, \ldots, d_n$ in $\UU$, each of which divides over $C$ (with respect to different indiscernible sequences). 

Given $C\subset B$, a partial type $\pi(x)$ over $B$  closed under deductions \emph{forks}, resp. \emph{divides} over $C$, if it contains a formula which does. 
\end{definition}
The collection of definable sets which fork over a fixed based set $C$ of $\UU$ form an ideal, and thus every partial type $\pi(x)$ over $B$ as above which does not fork over $C$ admits an extension to every superset $D\supset B$ which does not fork over $C$. 

\begin{definition}\label{D:Simple}\textup{(}\cf \cite[Proposition 4.8 \& 4.13]{eC11}\textup{)}~
The theory $T$ is \emph{simple} if for every single variable $x$ and every type $p(x)$ over $B$ there is some subset $C\subset B$ such that $|C|\le |T|$ and $p$ does not divide over $C$. 
\end{definition}

The impressive result of Kim and Pillay \cite[Theorem 4.2]{KP97} yields that simplicity of $T$ is equivalent to the existence of a suitable notion of independence, satisfying certain axiomatic properties.
\begin{fact}\label{F:Simple_indep}\textup{(}\cf \cite[Theorem 4.2]{KP97}\textup{)}~
The theory $T$ is \emph{simple} if it admits a ternary relation $\ind$ among triples $(A, B, C)$ of subsets of $\UU$, denoted by $A\ind_C B$, which satisfies the following conditions:
\begin{description}
\item[Invariance] If $A, B, C\equiv A', B', C'$, then $A\ind_C B$ if and only if $A'\ind_{C'} B'$.\vskip1mm
 \item[Local character] For any finite $A$ and any $B$, there exists a subset $C\subset B$ with $|C|\le |T|$ such that $A\ind_{C} B$.\vskip1mm
\item[Finite character] We have that $A\ind_C B$ if and only if $a\ind_C b$ for all finite tuples $a$ in $A$ and $b$ in $B$.\vskip1mm
  \item[(Full) Extension] If $C\subset B$, given $A$, there is some  $A'\equiv_{C} A$ with $A'\ind_C B$. In particular, {\bf Existence} holds, that is $A\ind_B \acl(B)$. \vskip1mm
\item[Symmetry] If $A\ind_C B$, then $B\ind_C A$.\vskip1mm
 \item[Transitivity] We have that $A\ind_C B, D$ if and only if $A\ind_C B$ and $A\ind_{B,C} D$, where $X,Y$ denotes the set-theoretic union $X\cup Y$.\vskip1mm
\item[Independence theorem over models]  If $M$ is a model and $C\ind_M D$, given $A\equiv_M A_1$ with $A\ind_M C$ and $A_1\ind_M D$, then there exists some $A_2$ realizing $\tp(A/M,C)\cup \tp(A_1/M, D)$ with $A_2\ind_M C, D$.\vskip1mm
\end{description}
Moreover, if such a ternary relation exists, then it is unique and coincides with \emph{non-forking independence}, that is, the subset $A$ is \emph{independent} from $B$ over $C$, denote by $A\ind_C B$, if the type $\tp(A/B,C)$ (as a type in infinitely many variables) does not fork over $C$. 
\end{fact}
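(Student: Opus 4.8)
The plan is to prove the two assertions of the statement in turn, both resting on a single auxiliary claim: \emph{whenever $A\ind_C B$, the type $\tp(A/B\cup C)$ does not divide over $C$}. Granting this, the simplicity of $T$ is immediate: given a type $p(x)\in S(B)$ (with $x$ a single variable, although the argument is insensitive to the length of $x$) and a realisation $A\models p$, the Local character axiom yields a subset $C\subseteq B$ with $|C|\le|T|$ and $A\ind_C B$, so $p=\tp(A/B)$ does not divide over $C$. This is precisely the formulation of simplicity in Definition \ref{D:Simple}.

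To establish the auxiliary claim I would follow \cite{KP97}. One first extracts from the listed axioms the structural properties not on the list but following formally from it --- left and right monotonicity and base monotonicity, from Transitivity together with Symmetry --- and then reduces, by the customary device, to the case where the base is a model: extend $C$ to a small elementary submodel $M\supseteq C$ with $M\ind_C AB$ (possible by iterating Extension), so that $A\ind_M B$ by Transitivity and Symmetry, and observe that in this configuration non-dividing over $M$ entails non-dividing over $C$. Now fix $\varphi(x,b)\in\tp(A/MB)$ with $b$ a finite tuple from $B$, and let $(b_i)_{i<\omega}$ be an $M$-indiscernible sequence with $b_0=b$; the goal is the consistency of $\{\varphi(x,b_i)\}_{i<\omega}$. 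After replacing $(b_i)$ by a sequence that is simultaneously $M$-indiscernible and $\ind$-independent over $M$ --- obtained by a Ramsey-style extraction (Erd\H{o}s--Rado) from a long $\ind$-independent sequence of realisations of $\tp(b/M)$, checking that this does not affect the dividing statement at issue --- one builds by recursion on $i$ realisations $A_i\models\bigcup_{j\le i}\tp(A/Mb_j)$ with $A_i\ind_M b_{\le i}$: the inductive step is an application of the Independence theorem over the model $M$ along $b_{<i}\ind_M b_i$, amalgamating $A_{i-1}$ with a fresh copy of $A$ over $Mb_i$. A compactness argument then produces a single realisation of $\bigcup_{i<\omega}\tp(A/Mb_i)$, and in particular $\{\varphi(x,b_i)\}_{i<\omega}$ is consistent.

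For the ``moreover'', assume $T$ is simple and write $\ind^{f}$ for non-forking independence. By the standard theory of forking in simple theories (see \cite{eC11}), $\ind^{f}$ is symmetric and transitive, has the extension and local-character properties and satisfies the Independence theorem over models; that is, $\ind^{f}$ is itself an independence relation in the sense of the statement. The auxiliary claim, together with the coincidence of forking and dividing under simplicity, gives $A\ind_C B\Rightarrow A\ind^{f}_C B$. The reverse inclusion $\ind^{f}\subseteq\ind$ is the uniqueness part of \cite{KP97}: one reduces to a model base, uses Extension for $\ind$ to produce an $\ind$-generic copy of the realisation in question, and transports type-information back via the Independence theorem for $\ind^{f}$; I would reproduce that argument. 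Consequently $\ind=\ind^{f}$, which is simultaneously the uniqueness and the identification asserted.

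The main obstacle is the amalgamation at the heart of the second paragraph: passing from an arbitrary witnessing indiscernible sequence to one that is also $\ind$-independent over $M$ without losing control of the dividing statement, and verifying at each recursion step that the two configurations being glued meet the hypotheses of the Independence theorem --- equality of types over $M$ and the relevant independence from the new block. Once this core lemma is in hand, the remaining points --- the structural axioms, the reduction to a model base, the final compactness, and the uniqueness argument --- amount to bookkeeping.
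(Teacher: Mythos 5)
The paper offers no proof of this statement: it is imported verbatim as a Fact from \cite[Theorem 4.2]{KP97}, so there is no internal argument to compare yours against. Judged on its own terms, your outline correctly identifies the overall architecture of the Kim--Pillay proof: the reduction of simplicity to the claim that $A\ind_C B$ implies non-dividing of $\tp(A/BC)$ over $C$, the passage to a model base, the inductive application of the independence theorem along an $M$-indiscernible, $\ind$-independent sequence, and the standard uniqueness argument for the \textquoteleft moreover\textquoteright\ clause. Those parts are fine as a sketch.

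However, the step you describe as ``replacing $(b_i)$ by a sequence that is simultaneously $M$-indiscernible and $\ind$-independent \ldots checking that this does not affect the dividing statement at issue'' is a genuine gap, and it is the crux of the whole theorem rather than a point to be checked. An Erd\H{o}s--Rado extraction from a long $\ind$-independent sequence of realisations of $\tp(b/M)$ yields an $M$-indiscernible, $\ind$-independent sequence $(b'_i)$ in $\tp(b/M)$, but its Ehrenfeucht--Mostowski type over $M$ need bear no relation to that of the original witnessing sequence $(b_i)$; in particular, consistency of $\{\varphi(x,b'_i)\}_{i}$ tells you nothing about $\{\varphi(x,b_i)\}_{i}$, and non-dividing requires consistency along \emph{every} $M$-indiscernible sequence starting at $b$. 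What is missing is the abstract form of Kim's lemma: if $\{\varphi(x,b_i)\}_i$ is $k$-inconsistent for \emph{some} $M$-indiscernible sequence in $\tp(b/M)$, then it is inconsistent along \emph{every} $\ind$-independent $M$-indiscernible sequence in $\tp(b/M)$. In \cite{KP97} this is not a routine verification: one assumes otherwise, spreads $\ind$-independent copies of the witnessing sequence over the nodes of a tree to produce a $\varphi$-tree with consistent branches and $k$-inconsistent siblings, and then combines the already-established consistency along $\ind$-independent indiscernible sequences with \textbf{Local character} and \textbf{Transitivity} to manufacture a forking chain of length $|T|^+$, a contradiction. Without this tree (or an equivalent $D$-rank) argument, neither the simplicity of $T$ nor the inclusion of $\ind$ in non-forking independence is established by your outline.
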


Furthermore, in a simple theory, the above independence relation satisfies these additional properties:
\begin{description}
\item[Algebraicity] We have that $A\ind_B A$ if and only if $A$ is algebraic over $B$. 
\item[Closure]  We have that $A\ind_C B$ if and only if $\acl(A, C)\ind_{C} \acl(B, C)$.  
\end{description}
Both properties hold easily for non-forking independence, but they can be abstractly deduced combining the properties in Definition \ref{D:Simple}.

For our purposes, we shall restrict our attention to a specific subclass of simple theories.

\begin{definition}\label{D:Rank1}
 The  theory $T$ is \emph{supersimple of rank $1$} if every formula $\varphi(x, b)$ with $x$ a single variable dividing (or equivalently, forking) over $\emptyset$ needs to be algebraic. This is equivalent to the same condition for every formula dividing (or forking) over some arbitrary parameter set $C$.  
\end{definition}
Note that every supersimple theory $T$ of rank $1$ is simple, directly from Definition \ref{D:Simple}. 
\begin{remark}\label{R:Rank1_equiv}

A theory $T$ is supersimple of rank $1$ if and only if  for every singleton $a$ and subsets $C\subset B$ with $a$ non-algebraic over $C$, we have that $a\nind_C B$ if and only if $a$ belongs to $\acl(B)$. 

In particular, symmetry of non-forking independence yields that the algebraic closure operator induces a pregeometry (or matroid) satisfying Steinitz's exchange principle. 
\end{remark}
\begin{proof}
Notice that $a\nind_C B$ whenever $a$ lies in $\acl(B)\setminus \acl(C)$ always holds by {\em Closure} and Definition \ref{D:forking}.  If every formula $\varphi(x, b)$ forking over $C$ is algebraic, it is clear that every type $\tp(a/B)$ with $a\nind_C B$ must be algebraic as well. 

For the other direction, consider now a formula $\varphi(x, b)$ with $x$ a single variable dividing over $C$. Every type $\tp(a/B)$ containing this formula needs to be algebraic over $Cb$, so by compactness the formula $\varphi(x, b)$ is algebraic. 
\end{proof}

\begin{lemma}\label{L:Existsinfty}
    Every supersimple theory of rank $1$ \emph{eliminates the quantifier $\exists^\infty$}, that is, for every formula $\varphi(x, y)$ with $x$ a single variable, there exists some $k_\varphi$ in $\N$ such that for every $b$ in $\UU$, either $\varphi(\UU, b)$ is infinite or it has cardinality at most $k_\varphi$. In particular, the collection of $b$'s with $\varphi(\UU, b)$ finite forms a definable set. 
\end{lemma}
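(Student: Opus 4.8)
The plan is to reduce everything to producing the uniform bound $k_\varphi$: once we have it, the ``in particular'' is immediate by compactness, since the finite fibres are then exactly those defined by $\neg\,\exists^{\ge k_\varphi+1}x\,\varphi(x,y)$. So fix $\varphi(x,y)$ with $x$ a single variable and assume for contradiction that no bound exists, i.e.\ for every $n\in\N$ there is $b_n$ with $n\le|\varphi(\UU,b_n)|<\aleph_0$. Fix a non-principal ultrafilter $\mathcal U$ on $\N$ and a small model $M_0$ containing all the $b_n$, and let $b^*$ realise the $\mathcal U$-limit $q_0(y)$ of the types $\tp(b_n/M_0)$; since $|\varphi(\UU,b_n)|\to\infty$, every formula $\exists^{\ge m}x\,\varphi(x,y)$ lies in $q_0$, so $\varphi(\UU,b^*)$ is infinite, while $\tp(b^*/M_0)$ remains finitely satisfiable in $\{b_n:n\in\N\}$. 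Note that $b^*\notin\acl(\emptyset)$: otherwise its (algebraic) type over $\emptyset$ would be isolated by a formula lying in $q_0$, hence satisfied by some $b_n$, forcing $\varphi(\UU,b_n)$ to be infinite.

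The next ingredient is the general observation that an infinite definable set always contains a point not algebraic over the parameters defining it — witnessed by an infinite indiscernible sequence inside it, all of whose members realise one and the same non-algebraic type. Choose such $a\in\varphi(\UU,b^*)$ with $a\notin\acl(b^*)$. It is exactly here that the rank-$1$ hypothesis is decisive: by Remark \ref{R:Rank1_equiv} we get $a\notin\acl(\emptyset)$ and $\tp(a/b^*)$ is a non-forking extension of $\tp(a/\emptyset)$, so $a$ is generic over $b^*$ in the whole universe rather than merely inside some auxiliary fibre (which is precisely what can fail in supersimple theories of higher rank, e.g.\ an equivalence relation with finite classes of unbounded size, where $\exists^\infty$ genuinely fails). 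One can then amalgamate independent conjugates: using the Independence theorem over $M_0$ together with the automorphisms shifting $b^*$ along its $\mathcal U$-Morley sequence, one obtains, for every $n$, parameters $b_1,\dots,b_n$ each realising $\tp(b^*/M_0)$, pairwise independent over $M_0$, whose fibres share a point that is non-algebraic over $M_0b_1\cdots b_n$.

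The heart of the matter — and the step I expect to be the main obstacle — is to turn ``$\varphi(\UU,b^*)$ is infinite'' into a genuine first-order property of $b^*$: a formula $\rho(y)\in\tp(b^*)$ implying that the fibre is infinite, or equivalently, to show that $\{b:\varphi(\UU,b)\text{ finite}\}$ is type-definable (it is trivially $\bigvee$-definable), after which compactness yields the bound. Once such a $\rho$ is produced, it lies in $q_0$, hence holds of some $b_n$, contradicting that $\varphi(\UU,b_n)$ is finite. This certification is the simple-theoretic analogue of the definability of Morley rank and degree in the strongly minimal case, where ``infinite'' is rephrased as ``cofinite''; in the present setting it should instead be extracted from the $\mathrm{SU}$-rank-$1$ structure (stationarity of Lascar strong types over $\acl^{\mathrm{eq}}(\emptyset)$, giving a well-behaved generic type and degree for rank-$1$ definable sets). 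Alternatively, elimination of $\exists^\infty$ for supersimple theories of rank $1$ is available in the literature on simple theories and one may simply invoke it.
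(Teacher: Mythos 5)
Your compactness bookkeeping at the two ends is fine: if the set of parameters $b$ with $\varphi(\UU,b)$ finite, which is trivially $\bigvee$-definable, were also type-definable, then compactness would give the uniform bound $k_\varphi$, and your ultrafilter-limit $b^*$ would yield the contradiction. But the proof stops exactly where the lemma begins. You never produce the certificate $\rho(y)\in\tp(b^*)$ implying that the fibre is infinite, and you say so yourself. Neither of your two suggestions for closing this works as stated: stationarity of Lascar strong types over $\acl^{\mathrm{eq}}(\emptyset)$ is just the independence theorem and holds in every simple theory, where $\exists^\infty$ need not be eliminated (consider an equivalence relation with finite classes of unbounded size, which is stable of rank $2$ but also a reduct situation worth keeping in mind); and the existence of a finite, definable ``degree'' for rank-one sets presupposes the very definability you are trying to establish, so invoking it is circular. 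The middle portion of your argument --- choosing $a\in\varphi(\UU,b^*)$ non-algebraic over $b^*$ and amalgamating independent conjugates of $b^*$ whose fibres share a non-algebraic point --- is never used to derive a contradiction, and it cannot create one on its own: every conjugate of $b^*$ over $M_0$ has an infinite fibre, so no tension with the finiteness of the $\varphi(\UU,b_n)$ arises. Finally, ``one may simply invoke it from the literature'' is not a proof of the statement to be proved; moreover the paper's Definition \ref{D:Rank1} of rank one is phrased via dividing formulas rather than via $\mathrm{SU}$-rank, so even a citation would owe a verification of hypotheses.

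The paper's route is different and much shorter. It quotes \cite[Lemma 4.3]{eH03}: elimination of $\exists^\infty$ holds for any $S_1$-theory, i.e.\ any theory in which there is no $\varphi(x,y)$ with $x$ a singleton and indiscernible $(b_n)_{n\in\N}$ such that each $\varphi(x,b_n)$ is infinite while $\varphi(x,b_0)\land\varphi(x,b_1)$ is finite. It then checks the $S_1$ condition from Definition \ref{D:Rank1}: given such data, the formula $\varphi(x,b_0)\land\neg\varphi(x,b_1)$ divides over $\emptyset$ along the sequence of pairs $(b_n,b_{n+1})_{n\in\N}$, since consecutive instances are already jointly inconsistent; by the rank-one hypothesis a dividing formula in one variable is algebraic, so $\varphi(x,b_0)=\bigl(\varphi(x,b_0)\land\neg\varphi(x,b_1)\bigr)\lor\bigl(\varphi(x,b_0)\land\varphi(x,b_1)\bigr)$ would be finite, a contradiction. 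Note where the rank-one assumption enters: it converts ``divides'' into ``finite'', which is precisely the finite--infinite certification your sketch is missing. If you want to salvage your approach, the honest path is to prove the $S_1$ reduction (or Hrushovski's lemma itself) rather than to defer it.
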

\begin{proof}
By \cite[Lemma 4.3]{eH03}, it suffices to show that our theory is an $S_1$-theory (\cf \cite[Definition 4.1]{eH03}), that is, there is no formula $\varphi(x, y)$ with $x$ a single variable and an indiscernible sequence $(b_n)_{n\in \N}$ with $\varphi(x, b_n)$ infinite but $\varphi(x, b_n)\land \varphi(x, b_{n+1})$ finite. Assume otherwise that there exists such a formula $\varphi$ and such an indiscernible sequence $(b_n)_{n\in\N}$, so the formula $\varphi(x, b_0)\land \neg \varphi(x, b_1)$ would clearly divide over $\emptyset$, witnessed by the indiscernible sequence of pairs $(b_n, b_{n+1})_{n\in\N}$. Now, the set 
\[ \varphi(x,b_0)=\left(\varphi(x, b_0)\land \neg\varphi(x, b_1)\right) \lor \left(\varphi(x, b_0)\land \varphi(x, b_1) \right) \]
would be finite, which gives the desired contradiction.  
\end{proof}
It is immediate that every $S_1$-theory is supersimple of rank $1$, so these two notions coincide.  Furthermore, using the previous lemma, a theory is supersimple of rank $1$ if and only if there is no formula $\varphi(x, y)$ with $x$ a single variable and a  sequence $(b_n)_{n\in \N}$ (possibly not indiscernible) with $\varphi(x, b_n)$ infinite but $\varphi(x, b_0)\land \varphi(x, b_1)$  finite. 

\begin{example}\label{E:s1_theories}~

\begin{enumerate}[(a)]
\item The theory of the random graph is axiomatized in the language of graphs by stating that for every two finite disjoint sets  $A$ and $B$ of vertices there is a vertex connected to every point in $A$ and not connected to any point in $B$. The probabilistic Erd\"os-R\'enyi countable graph is a model of this theory \cite[Section 4.3]{pC90}. A straightforward quantifier elimination result yields that this theory is supersimple of rank $1$, where the model theoretic algebraic closure of a set $X$ coincides with the set $X$ itself. 
\item Every strongly minimal theory, or more generally every stable theory of Lascar rank $1$, is supersimple of rank $1$. In particular, the theories of the abelian groups $\mathbb Q$ and of $\mathbb Z$  in the language of groups are supersimple of rank $1$. 
\item In the language $\LL_P$ of groups expanded by a unary predicate $P$, the theory of the abelian group $\mathbb Z$  where the predicate $P$ is interpreted as the set of square-free integers   is supersimple (yet unstable) of rank $1$ \cite{BT21}. The same is true, assuming Dickson's conjecture, whenever the predicate $P$ is interpreted as the set of prime integers \cite{KS17}. These two expansions will be discussed in more detail in Section \ref{S:integers}.
\item The common theory of non-principal ultraproducts of finite fields in the language of rings is supersimple of rank $1$ \cite[Proposition 4.5]{CDM92}. This particular theory will be discussed in more detail in Section \ref{S:psf}.
\end{enumerate}
\end{example}

Remark \ref{R:Rank1_equiv} and Lemma \ref{L:Existsinfty} yield every supersimple theory of rank $1$ is \emph{geometric} \cite[Definition 2.1 \& Examples on p. 316]{jG05}, so there is a well-behaved additive dimension function:  Given a finite tuple $a$ and a subset of parameters $B$, set $\dim(a/B)$ to be the length of a maximal subtuple $a'$ of $a$ which is $B$-independent, that is, for each coordinate $x$ of $a'$, we have that $x$ is not algebraic over $B, a'\setminus \{x\}$.  In particular, for every $C\subset B$ 
\[ a\ind_C B \ \text{ if and only if} \ \dim(a/C)=\dim(a/B).\] 
Transitivity of non-forking independence corresponds to the following equation for all parameter sets $C$ and all finite tuples $a$ and $b$:
\[
\dim(a,b/C)=\dim(a/C,b)+\dim(b/C).
\]
More generally, given a definable subset $X$ over the parameter $B$, set 
\[ 
\dim(X)=\max \{ \dim(a/B) \ | \ a \in X\}, 
\]
with the convention that $\dim(\emptyset)=0$.
\begin{remark}\label{R:dim}~
\begin{enumerate}[(a)]
    \item A definable subset $X$ is finite if and only if $\dim(X)= 0$. 
    \item For any two definable sets $X$ and $Y$ of $\UU^n$, \[ \dim(X\cup Y)=\max\{\dim(X),\dim(Y)\}.\]
    \item By Lemma \ref{L:Existsinfty}, given a formula $\varphi(x, y)$  and an integer $k$, the collection 
    \[  \left\{ b\in \UU^{|y|} \ | \ \dim\left(\varphi(x, b)\right)=k \right\} 
    \] is definable without parameters \cite[Fact 2.4]{jG05}.  
\end{enumerate}
\end{remark}

From now on, we will fix a supersimple theory $T$ of rank $1$ such that the universe $\UU$ admits a group law definable in $T$. For ease of the presentation, we will denote the universe by $G$ and assume that both the group law and the inverse map are definable without parameters, as in Example \ref{E:s1_theories} (b)--(d). 

In the particular case of a supersimple theory of rank $1$, the classical definitions of generic types becomes much simpler (no pun intended) to present. The reader should be aware that our definition of genericity below only applies to this particular case and does not correspond to the general definition \cite[Definition 4.1.1]{fW00}.

\begin{definition}\label{D:gen}
A unary type $p$ over the parameter set $A$ is \emph{generic} if the type $p$ is not algebraic. More generally, an  element $g$ of $G$ is \emph{generic} over the parameter set $A$ if $\tp(g/A)$ is not algebraic, equivalently if $\dim(g/A)=1$. 
\end{definition}
\begin{remark}\label{R:prop_gen}
It follows immediately from the definitions that a non-forking extension of a generic type remains generic. Moreover, if $g$ is generic over $A$, then so are $g\inv$, $a\cdot g$ and $g\cdot a$ for every $a$ in $\acl(A)$. In particular, the product of two generic independent elements over $A$ is again generic and independent of each factor over $A$. Every element $x$ of $G$ can be written as the product of two generic elements over $A$ (which need not be an independent pair over $A$, unless $x$ itself is generic over $A$). 
\end{remark}

\begin{definition}\label{D:G00}
Given a subset $A$ of $G$, we denote by $\GO A$ the  intersection of all subgroups type-definable over $A$ and of bounded index. This is the smallest type-definable 
subgroup over $A$ of bounded index, see \cite[Definition 4.1.10 \& Lemma 4.1.11]{fW00}. Furthermore, the subgroup $\GO A$ is normal in $G$. 
\end{definition}

\begin{fact}\label{F:G00}~
Fix a subset $A$ of $G$. 
\begin{enumerate}[(a)]
    \item The subgroup $\GO A$ is an intersection of $A$-definable subgroups of $G$ of finite index \cite[Theorem 5.5.4]{fW00}. 
    \item We say that  \emph{the connected component of $G$ exists (absolutely)} if $\GO \emptyset=\GO A$ for every subset of parameters $A$. For a supersimple expansion of rank $1$ of the additive group $\Z$, every definable subgroup of finite index is definable without parameters, so the connected component exists. If the abelian group $G$ is divisible, the connected component exists, since $G=\GO A$. 
    \item Given a generic type $p$ over $A$ and an element $g$ of $\GO A$ generic over $A$, the partial type $p\cup g\cdot p$ is defined over $A, g$ and is not algebraic.  Thus, it can be extended to a generic type over $A, g$, so there exists some $b_1=g\cdot b$ realizing $p$ generic over $A, g$ with  $b$ realizing $p$ again, see \cite[Proposition 4.1.21 \& Lemma 4.1.23]{fW00}.     
\end{enumerate}
\end{fact}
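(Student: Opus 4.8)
All three items are standard facts about groups type-definable in simple theories, and in the supersimple rank $1$ setting I would obtain them as follows. For (a), set $K=\bigcap\{H : H\le G\text{ is }A\text{-definable of finite index}\}$; the inclusion $\GO A\subseteq K$ is immediate, since each such $H$ is type-definable over $A$ of bounded index. The reverse inclusion reduces to the claim that \emph{every} type-definable-over-$A$ subgroup $H\le G$ of bounded index contains an $A$-definable subgroup of finite index: granting this, $K\subseteq H$ for every such $H$, and hence $K\subseteq\GO A$. This claim is \cite[Theorem 5.5.4]{fW00}; the idea is to write $H$ as a filtered intersection of $A$-definable sets and invoke the chain condition — valid in every simple theory — on intersections of conjugates of members of a uniformly definable family of subgroups, which stabilises the situation and lets one extract a genuine $A$-definable finite-index subgroup lying below $H$.

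For (b) I would argue by elementary group theory, using (a). If $G$ is a supersimple rank $1$ expansion of $(\Z,+)$, then a definable finite-index subgroup of $G$ is in particular a finite-index subgroup of a group elementarily equivalent to $(\Z,+)$, hence of the form $mG$ for some $m\ge 1$ (the finite-index subgroups of $\Z$ being the $m\Z$), and $mG$ is already $\emptyset$-definable; thus the family of $A$-definable finite-index subgroups, and with it $\GO A$ by (a), does not depend on $A$. If $G$ is divisible abelian, then $G$ has no proper finite-index subgroup, since $[G:H]=n<\infty$ forces $nG\subseteq H$ (apply Lagrange's theorem to $G/H$) while $nG=G$ by divisibility; hence $G$ itself is the only $A$-definable finite-index subgroup and $\GO A=G=\GO\emptyset$ by (a).

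For (c), both $p$ and $g\cdot p$ are generic over $A,g$ by Remark \ref{R:prop_gen}, and $p\cap g\cdot p$ is plainly type-definable over $A,g$, so the content is that it is not algebraic. The role of the hypothesis $g\in\GO A$ is precisely that for $c\models p$ the translate $gc$ stays in the same coset of $\GO A$ as $c$, so that $gc$ can still realise $p$; for a general generic $g$ the intersection could even be inconsistent. To produce $b\models p$ with $g\cdot b\models p$ one then passes to a model extending $A$ and amalgamates, via the independence theorem of Fact \ref{F:Simple_indep}, a realisation of $p$ with a $g$-translate of a realisation of $p$ into a single generic type extending both $p$ and $g\cdot p$; this is the argument of \cite[Proposition 4.1.21 \& Lemma 4.1.23]{fW00}, and a generic completion of $p\cap g\cdot p$ supplies the required $b$, generic over $A,g$.

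The genuinely substantial step is the nontrivial direction of (a) — that a bounded-index type-definable subgroup already contains a finite-index \emph{definable} one — where the strength of simplicity enters through the chain condition on definable subgroups; parts (b) and (c) are then, respectively, elementary group theory and a routine application of the independence theorem, so the effort would go into carrying out the chain-condition argument for (a) cleanly in the rank $1$ setting.
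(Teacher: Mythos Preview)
Your proposal is correct and aligns with the paper's approach. The paper does not actually give its own proof of this Fact: it cites \cite[Theorem 5.5.4]{fW00} for (a) and \cite[Proposition 4.1.21 \& Lemma 4.1.23]{fW00} for (c), and embeds the brief group-theoretic justifications for (b) directly in the statement; your sketches unpack precisely these references and inline arguments.
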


An immediate application of Fact \ref{F:G00} (c), together with Lemma \ref{L:Existsinfty}, will yield Proposition \ref{P:Sarkozy} below. For this, we need to introduce first for our purposes a definition, resonating with the combinatorial notion of \emph{intersective sets} \cite[Definition]{tL14}. 

\begin{definition}\label{D:intersective}
Consider a supersimple group $G$ of rank $1$ and a subset $W$ of $G$ definable over an elementary substructure $M$ of $G$. We say that $W$ is \emph{generically intersective} if the set $W\cap \GO M$ is infinite. 
\end{definition}

\begin{remark}\label{R:intersective}
The definition of being generically intersective does not depend on the elementary substructure over which $W$ is defined. Indeed, given another elementary substructure $M'$ over which $W$ is defined, we can find a common elementary substructure $N$ extending both $M$ and $M'$. By construction, the subgroup $\GO N$ is a subset of $\GO{M'}$, so we need only show that $W\cap \GO N$ is infinite. Otherwise, Fact \ref{F:G00} (a) and compactness yields that there is a subgroup $H$ defined over $N$ of finite index such that $W\cap H$ is finite. Since $M$ is an elementary substructure of $N$, we conclude that there is an $M$-definable subgroup $\widetilde H$ of finite index such that $W\cap \widetilde H$ is finite, contradicting that $W\cap \GO M$ is infinite. 
\end{remark}

\begin{prop}\label{P:Sarkozy}
Consider a definable generically intersective subset $W$ of a supersimple group $G$ of rank $1$. For every definable subset $X$, the set  \[ X_0=\{ x \in X \ | \ \emph{there are only finitely many $w$ in $W$ with } w\cdot x \in X \}\]
is finite.\end{prop}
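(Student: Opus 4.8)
The plan is to prove that $X_0$ is finite by showing $\dim(X_0)=0$, i.e.\ that $X_0$ contains no element which is generic over a suitably chosen model. First I would fix an elementary substructure $M$ over which $W$ is definable and which also contains the finitely many parameters needed to define $X$; by Remark~\ref{R:intersective} the set $W\cap\GO M$ is still infinite. Since $W$ and $X$ are $M$-definable, the set $X_0$ is then itself definable over $M$: applying Lemma~\ref{L:Existsinfty} to the formula $y\in W\wedge y\cdot x\in X$ (in the variables $x$ and $y$) produces a bound $k$ such that, for each $x$, the set $\{y\in W : y\cdot x\in X\}$ is either infinite or of size at most $k$, whence
\[
X_0=\bigl\{\, x\in X : \bigl|\{y\in W : y\cdot x\in X\}\bigr|\le k \,\bigr\}
\]
is definable over $M$. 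This first reduction is what makes the transfer argument below legitimate.

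Now suppose towards a contradiction that $X_0$ is infinite. Then $\dim(X_0)=1$, so there is a generic (i.e.\ non-algebraic) complete type $p$ over $M$ containing the formula ``$x\in X_0$''. A short compactness argument using Fact~\ref{F:G00}(a) yields an element $g\in W\cap\GO M$ generic over $M$: for every $M$-definable finite-index subgroup $H$ the set $W\cap H$ contains $W\cap\GO M$ and is therefore infinite, so the partial type over $M$ asserting that $x$ lies in $W$ and in every $M$-definable finite-index subgroup while avoiding every finite $M$-definable set is finitely satisfiable, and any realization $g$ lies in $W\cap\GO M$ with $\dim(g/M)=1$. With $p$ and this $g$ in hand, I would invoke Fact~\ref{F:G00}(c) with $A=M$: it provides some $b\models p$, generic over $M,g$, such that $g\cdot b\models p$ as well.

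The contradiction is then extracted as follows. Since ``$x\in X_0$'' lies in $p$, both $b$ and $g\cdot b$ realise $X_0\subseteq X$; in particular $g\cdot b\in X$ while $g\in W$. On the other hand, from $\dim(b/M,g)=1$, $\dim(g/M)=1$ and $\dim(b/M)=1$ together with additivity of $\dim$, we get $\dim(b,g/M)=\dim(b/M,g)+\dim(g/M)=2$ and also $\dim(b,g/M)=\dim(g/M,b)+\dim(b/M)$, hence $\dim(g/M,b)=1$, so $g\notin\acl(M,b)$. But $b\in X_0$ means the $(M,b)$-definable set $\{w\in W : w\cdot b\in X\}$ is finite, hence contained in $\acl(M,b)$; as $g$ is one of its elements, $g\in\acl(M,b)$ — a contradiction. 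Therefore $X_0$ is finite.

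The only genuinely delicate point is the transfer step, namely that $b\models p$ forces $b\in X_0$; this is precisely why it is essential to establish beforehand, via elimination of $\exists^\infty$, that $X_0$ is an honest $M$-definable set rather than a set carved out by an infinitary condition. Granting that, the rest is a routine application of the ``$g\cdot p$ meets $p$'' phenomenon of Fact~\ref{F:G00}(c) together with the additive behaviour of the dimension in a supersimple theory of rank~$1$.
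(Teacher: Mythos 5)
Your proof is correct and takes essentially the same route as the paper's: extend $X_0$ to a generic type $p$ over $M$, pick a generic element of $W\cap\GO M$, apply Fact~\ref{F:G00}(c) to get $b$ and $g\cdot b$ both realizing $p$, and derive the contradiction from the dimension computation $\dim(g/M,b)=1$. The only differences are cosmetic: you spell out the compactness argument producing the generic element of $W\cap\GO M$ (which the paper merely asserts), and you phrase the final contradiction via $g\in\acl(M,b)$ rather than via the infinitude of realizations of the non-algebraic type $\tp(g/M,b)$.
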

Note that $X_0$ is definable by Lemma \ref{L:Existsinfty}. 
\begin{proof}
Assume otherwise for a contradiction. By Remark \ref{R:intersective}, there exists an elementary substructure $M$ over which both $W$ and $X$ are defined, and choose some $w$ in $W\cap \GO M$ generic over $M$. We can now extend the definable subset $X_0$ (seen as a partial type) to a complete generic (i.e. non-algebraic) type $p$ over $M$.  Fact \ref{F:G00} (c) applied to $p$ and $w$ yields that there exists some $x$ realizing $p$ generic over $M, w$ such that $w\cdot x$ realizes $p$ again. Thus, both $x$ and $w\cdot x$ belong to $X$. Furthermore, 
\begin{multline*}
   \dim(w/M, x) = \dim(x, w/M) - \dim(x/M) = \\ =\dim(w/M)+ \dim(x/M,w) - \dim(x/M) = 1 + 1- 1 =1. 
\end{multline*}
Hence, the type $\tp(w/M, x)$ is not algebraic so it has infinitely many realizations. Every such realization $w'$ yields that $w'\cdot x$ belongs to $X$, contradicting that $x$ belongs to $X_0$. 
\end{proof}

If $M$ is an elementary substructure of $G$, each coset of the subgroup $\GO M$  is type-definable over $M$ and 
hence $M$-invariant (it need not be the case that this coset has a representative in $M$). Thus, every type $p$ over 
$M$ contained in $G$ must determine a coset of $\GO M$. We 
denote by $\Cos p$ the coset of $\GO M$ containing some (and hence every) realization of $p$. Given three types $p$, $q$ and $r$ over $M$ such that $a\cdot b$ realizes $r$ for some realizations $a$ of $p$ and $b$ of $q$, then $\Cos p\cdot \Cos q=\Cos r$ in the quotient group $G/\GO M$. The following result can be seen as a sort of converse. 

\begin{fact}\label{F:pqr}\textup{(}\cite[Proposition 2.2]{PSW98} \& \cite[Lemma 2.3]{MPP04}\textup{)}~
Consider an elementary substructure $M$ of $G$ and three generic types $p$, $q$ and $r$ of $G$ over $M$ such that 
\[
\Cos p \cdot \Cos q=\Cos r
\]
in the quotient group $G/\GO M$. 
Then, there are realizations $a$ of $p$ and $b$ of $q$ with  $a\ind_M b$ and $a\cdot b$ realizing $r$.
\end{fact}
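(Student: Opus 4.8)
The plan is first to produce a generic product that lands in the correct coset of $\GO M$, and then to repair its type \emph{inside} that coset without destroying independence.

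First I would pick $a_0\models p$ and $b_0\models q$ with $a_0\ind_M b_0$; by Remark~\ref{R:prop_gen} the element $c_0:=a_0 b_0$ is generic over $M$, and since $\Cos(a_0b_0)=\Cos p\cdot\Cos q=\Cos r$, the type $r_0:=\tp(c_0/M)$ is a generic type over $M$ lying in the same coset of $\GO M$ as $r$. So it is enough to show: if $r_0$ and $r$ are generic over $M$ in the same coset of $\GO M$ and $r_0=\tp(ab/M)$ for some $M$-independent pair with $a\models p$ and $b\models q$, then $r$ is also of this form. To carry out this transport, I would take $c\models r$ with $c\ind_M\{a_0,b_0\}$; using that $\GO M$ is normal, the element $g:=c\,c_0^{-1}$ then lies in $\GO M$, is generic over $M$, satisfies $g\ind_M\{a_0,b_0\}$, and fulfils $c=g\,a_0b_0$. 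A short dimension count shows that $g a_0$ and $b_0$ are independent over $M$, so $c=(g a_0)b_0$ already displays $r$ as $\tp(a'b'/M)$ for an $M$-independent pair in which $b'\models q$ and $a'$ realises \emph{some} generic type in the coset $\Cos p$; the difficulty is that this type need not be $p$.

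Fact~\ref{F:G00}~(c) is exactly what addresses this: it allows one to absorb a generic element of $\GO M$ into a realisation of $p$, producing some $\beta\models p$ which still realises $p$ after left multiplication by that element. Feeding such a $\beta$ into the configuration above and amalgamating this ``$p$-side'' datum with the ``product-side'' datum over the model $M$ by means of the independence theorem (Fact~\ref{F:Simple_indep}) would then yield an $M$-independent pair $a\models p$, $b\models q$ with $ab\models r$, as desired.

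The hard part is precisely this last transport and amalgamation. In a stable theory a coset of $\GO M$ carries a unique generic type and the correction is automatic; in a simple theory a coset may carry many generic types, and multiplying a realisation of $r$ by a generic element of $\GO M$ only controls its coset, not its type, so one genuinely needs the ``generic $\GO M$-invariance'' packaged in Fact~\ref{F:G00}~(c). One must also take care to invoke Fact~\ref{F:G00}~(c) only over the model $M$, over which $\GO M$ is computed, and to import every further independence required for the amalgamation through the independence theorem rather than by enlarging the base set; by Fact~\ref{F:G00}~(b) the connected component is parameter-free in the settings relevant to the applications, which makes this point harmless there.
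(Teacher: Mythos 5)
The paper does not prove this statement (it is quoted as a Fact from \cite{PSW98} and \cite{MPP04}), so I am assessing your argument on its own terms. Your preparatory steps are correct: the configuration $c=ga_0b_0$ with $g=cc_0^{-1}\in\GO{M}$ generic over $M$ and independent from $a_0,b_0$, and the diagnosis that the only defect is that $ga_0$ realises some generic type of $\Cos{p}$ other than $p$. The gap is in the amalgamation, which is exactly the step you flag as hard and leave in outline, and as outlined it does not close. The two conditions you would need the amalgamated realisation of $p$ to satisfy are ``$g\cdot x\models p$'' (a condition over $M,g$, supplied by your $\beta$) and ``$g\cdot x\cdot b_0\models r$'' (a condition over $M,g,b_0$, supplied by $a_0$). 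These two parameter sets share the non-algebraic element $g$, so they are not independent over $M$ and the independence theorem over the model $M$ does not apply to them; nor can you amalgamate over $M,g$, which is not a model. If instead you weaken the product-side datum to $\tp(a_0/M,b_0)$, whose parameter set \emph{is} independent from $g$ over $M$, the amalgam $a_2$ only satisfies $a_2b_0\models r_0$ with $a_2b_0\ind_M g$; since $\tp(a_2b_0/M,g)$ and $\tp(c_0/M,g)$ are merely two non-forking extensions of $r_0$ and generic types are not stationary, the fact that $gc_0\models r$ gives no information about $g\,a_2b_0$. So the non-stationarity you correctly identify as the crux resurfaces untouched at the last step.

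The repair is to amalgamate types of the \emph{translator} rather than of the realisation of $p$, over the two independent anchors $a_0$ and $c_0=a_0b_0$ of the configuration (note $a_0\ind_M c_0$, since $\dim(c_0/M,a_0)=\dim(b_0/M,a_0)=1$). Let $\tau=\tp(cc_0^{-1}/M)$, the generic type of $\GO{M}$ forced on you by the requirement of translating $r_0$ to $r$; its realisation $h''=cc_0^{-1}$ satisfies $h''\ind_M c_0$ and $h''c_0=c\models r$. Apply Fact~\ref{F:G00}~(c) to $p$ and a realisation $h^{*}$ of $\tau$ to get $\beta\models p$ with $\beta\ind_M h^{*}$ and $h^{*}\beta\models p$; an automorphism over $M$ sending $\beta$ to $a_0$ yields $h'\models\tau$ with $h'\ind_M a_0$ and $h'a_0\models p$. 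Now $h'\equiv_M h''$, $h'\ind_M a_0$, $h''\ind_M c_0$ and $a_0\ind_M c_0$, so the independence theorem over $M$ produces $h\in\GO{M}$ with $h\ind_M a_0,c_0$, $ha_0\models p$ and $hc_0\models r$. Then $a:=ha_0$ and $b:=b_0$ work: $ab=ha_0b_0=hc_0\models r$, and $h\ind_M a_0,b_0,c_0$ (as $b_0\in\dcl(a_0,c_0)$) gives $\dim(ha_0/M,b_0)=1$, i.e.\ $a\ind_M b$. So your ingredients (the translator $g$, Fact~\ref{F:G00}~(c) applied to $p$, the independence theorem) are the right ones; the missing idea is which element to amalgamate and over which independent pair of parameter sets. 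This is essentially the argument of \cite[Proposition 2.2]{PSW98}.
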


\begin{definition}\label{D:gen_squares}\textup{(}c.f.~\cite[Theorem 1.5]{tS17}\textup{)}~
A subset $X$ of $G$ definable  over the parameter subset $A$ has \emph{generically distinct squares} if $x^2\neq y^2$ for every two generic independent elements $x$ and $y$ of $X$ over $A$.
\end{definition}
\begin{remark}\label{R:gen_squares}~
\begin{enumerate}[(a)]
    \item If the definable set $X$ is finite, then it clearly has generically distinct squares. Similarly, if $G$ is abelian and has only finitely many involutions, every definable subset has generically distinct squares. 
\item Assume now that the $A$-definable subset $X$ of $G$ has generically distinct squares. An element $x$ in $X$ is generic over $A$ if and only if so is $x^2$, for the type $\tp(x/A, x^2)$ is algebraic in the supersimple group $G$ of rank $1$. 
\end{enumerate}
\end{remark}
 
\begin{theorem}\label{T:Main}
Consider a supersimple group $G$ of rank $1$ and a definable subset $X$ of $G$. If $X$ has generically distinct squares, then for every definable subgroup $H$ of $G$ of finite index the subset of \emph{bad initial points}  
\[ \mathrm{B}(X)=\left\{x \in X\ | \ \exists^{<\infty} g \text{ in $H$ with $x$, $x\cdot g$ and $g\cdot x\cdot g$ in $X$}\right\} 
\] is finite. 
 \end{theorem}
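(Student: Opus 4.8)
The plan is to argue by contradiction, imitating the proof of Proposition~\ref{P:Sarkozy} but now feeding a suitable \emph{square type} into Fact~\ref{F:pqr}. First note that $\mathrm{B}(X)$ is definable: the formula $\theta(g;x)$ expressing that $g\in H$, that $x\cdot g\in X$ and that $g\cdot x\cdot g\in X$ admits, by Lemma~\ref{L:Existsinfty}, a uniform bound $k_\theta$ on the size of its finite fibres, so that $\mathrm{B}(X)=\{x\in X : |\theta(\UU;x)|\le k_\theta\}$. Suppose $\mathrm{B}(X)$ were infinite, and fix a small elementary substructure $M\preceq G$ over which $X$ and $H$ --- hence also $\mathrm{B}(X)$ --- are defined. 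Since $\mathrm{B}(X)$ is infinite, by Remark~\ref{R:dim} the partial type $\{y\in\mathrm{B}(X)\}$ is non-algebraic, so it extends to a generic type $p$ over $M$; as $\mathrm{B}(X)\subseteq X$ we have $p\vdash(y\in X)$ as well. The computation that drives the argument is that, for $x$ and $g$ in $G$, writing $a=x$, $b=x\cdot g$ and $c=g\cdot x\cdot g$ one has $a\cdot c=(x\cdot g)^2=b^2$ and $g=a\inv\cdot b$. Hence it suffices to produce some realization $a$ of $p$ and some $g$ in $H$ with $\dim(g/M,a)=1$ such that both $a\cdot g$ and $g\cdot a\cdot g$ lie in $X$: then $\tp(g/M,a)$ is non-algebraic, so it has infinitely many realizations $g'$, each of which again satisfies $g'\in H$, $a\cdot g'\in X$ and $g'\cdot a\cdot g'\in X$, contradicting $a\in\mathrm{B}(X)$ (recall $p\vdash(y\in\mathrm{B}(X))$).

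To construct such a configuration, pick $b_0\models p$ and let $s=\tp(b_0^2/M)$. Since $b_0$ lies in $X$, is generic over $M$, and $X$ has generically distinct squares, Remark~\ref{R:gen_squares}(b) shows that $b_0^2$ is generic over $M$; thus $s$ is a generic type over $M$, and it lies in the coset $\Cos p\cdot\Cos p$ of $\GO M$. Apply Fact~\ref{F:pqr} to the three generic types $p$, $p$ and $s$, which satisfy $\Cos p\cdot\Cos p=\Cos s$: we obtain $a\models p$ and $c\models p$ with $a\ind_M c$ and $a\cdot c\models s$. Since $a\cdot c$ realizes $\tp(b_0^2/M)$, there is $b\models p$ with $b^2=a\cdot c$. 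Now $a$, $b$ and $c$ all lie in $X$; put $g=a\inv\cdot b$, so that $a\cdot g=b\in X$ and $g\cdot a\cdot g=a\inv\cdot b^2=c\in X$. Moreover $a$ and $b$ both realize $p$, hence lie in the same coset of $\GO M$, so $g\in\GO M\subseteq H$, the last inclusion because $H$ is an $M$-definable subgroup of finite index (Definition~\ref{D:G00}).

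It remains to check $\dim(g/M,a)=1$. From $b^2=a\cdot c$ together with $b\in X$, Remark~\ref{R:gen_squares}(b) gives $b\in\acl(M,a,c)$; combined with $a\ind_M c$ this yields
\[ \dim(a,b,c/M)=\dim(a,c/M)=\dim(a/M)+\dim(c/M,a)=1+1=2, \]
so that $\dim(b,c/M,a)=\dim(a,b,c/M)-\dim(a/M)=1$. Were $\dim(b/M,a)$ equal to $0$, then $b\in\acl(M,a)$ and hence $c=a\inv\cdot b^2\in\acl(M,a)$, forcing $\dim(a,b,c/M)=\dim(a/M)=1$, a contradiction. Therefore $\dim(b/M,a)=1$, and since $b=a\cdot g$ and $g=a\inv\cdot b$ are interdefinable over $M,a$, we get $\dim(g/M,a)=1$. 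As explained above, this contradicts $a\in\mathrm{B}(X)$ and completes the proof.

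The substantive points are the reformulation $a\cdot c=b^2$ of the three-term condition and the use of the square type $s$ in Fact~\ref{F:pqr}: the hypothesis that $X$ has generically distinct squares is exactly what is needed, first so that $s$ is generic (so that Fact~\ref{F:pqr} applies), and then in the dimension count, in order to recover $b$ algebraically from $b^2=a\cdot c$. Everything else --- the definability of $\mathrm{B}(X)$, the coset bookkeeping placing $g$ inside $\GO M\subseteq H$, and the additivity of $\dim$ --- is routine.
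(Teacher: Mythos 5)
Your proof is correct and takes essentially the same route as the paper's: both extend $\mathrm{B}(X)$ to a generic type $p$ over a model $M$ containing the relevant parameters, apply Fact~\ref{F:pqr} to the triple $(p,p,r)$ with $r$ the (generic, by Remark~\ref{R:gen_squares}) type of a square of a realization of $p$, obtain $M$-independent $a,c\models p$ with $a\cdot c=b^2$ for some $b\models p$, set $g=a^{-1}\cdot b\in\GO{M}\subseteq H$, and contradict $a\in\mathrm{B}(X)$ via the non-algebraicity of $\tp(g/M,a)$. The only differences are cosmetic: you spell out the definability of $\mathrm{B}(X)$ and the dimension count in more detail than the paper does.
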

 As before, note that $\mathrm{B}(X)$ is definable by Lemma \ref{L:Existsinfty}. 
\begin{proof}
Consider an elementary substructure $M$ of $G$ over which both $X$ and $G$ are defined. If the $M$-definable subset $\mathrm{B}(X)$ were infinite, we can proceed as in the proof of Proposition \ref{P:Sarkozy} and complete it to a generic (that is, a non-algebraic) type $p$ over the elementary substructure $M$. Since $X$ has generically distinct squares, the type $r=\tp(a^2/M)$, with $a$  realizing $p$ (and thus in $X$), is again generic by Remark \ref{R:gen_squares} (b). 

Fact \ref{F:pqr} applied to the triple $p$, $p$ and $r$ yields two 
$M$-independent  realizations $a$ and $c$ of $p$ with $a\cdot c$ realizing $r$, so $a\cdot c=b^2$ for some $b$ realizing $p$.
Set $g= a\inv \cdot b$. Notice that the elements $a$, $b=a\cdot g$ and 
$c=g\cdot a\cdot g$ all lie in $\mathrm{B}(X)\subset X$. We need only show that $g$ belongs to $H$ and that $\tp(g/M, a)$ is not algebraic to contradict that $a$ belongs to $\mathrm{B}(X)$. Indeed, whenever $g_1$ realizes $\tp(g/M,a)$, we have that $a, a\cdot g_1$ and $g_1\cdot a\cdot g_1$ all lie in 
$X$. 

Now, the element $b$ cannot be algebraic over $M,a$, since the generic element $c=a\inv\cdot b^2$ is independent from $a$ over $M$. It follows that $g= a\inv \cdot b$ cannot be algebraic over $M,a$. Now, the elements $a$ and $b$ are two independent realizations of the same  generic type $p$, so they belong to the same coset $\Cos p$ of $\GO M$. Hence, their difference $g$ belongs to $\GO M$, which is a subgroup of $H$ (\cf Definition \ref{D:G00}), as desired. 
\end{proof}

\begin{definition}\label{D:corner}
Given an abelian group $G$ (written additively) and subset $S\subset G\times G$,  a \emph{(non-trivial) corner} for $S$ consists of a pair $(x, y)$ in $G\times G$ as well as some $g\ne 0_G$ in $G$ such that  all three pairs $(x, y)$,  $(x, y+g)$ and $(x+g, y)$ belong to $S$. 

A \emph{(non-trivial) skew-corner} for $S$ consists of a triple $(x, y, y')$ in $G^3$ and some $g\ne 0_G$ in $G$  such that the pairs $(x,y)$, $(x, y+g)$  and $(x+g, y')$ all belong to $S$. 
\end{definition}

If $S$ is a definable subset of $G\times G$ over the parameter set $A$, given an element $g$, the collection $\mathrm{sk}_{\llcorner}(S)_g$ of triples $(x, y, y')$ which build a skew-corner for $S$ with respect to $g$ is a subset of $G^3$ definable over $A, g$. In particular, we can consider the $g$'s for which $\dim(\textrm{sk}_{\llcorner}(S)_g)<3$. We will prove the desired result in this context in a slightly more general situation. 

\begin{definition}\label{D:preserves_gen}
A definable map $f:G\to G$ of the supersimple group $G$ of rank $1$ \emph{preserves generically the connected component} if, for every subset of parameters $A$ over which $f$ is defined and every generic element $g$ of $\GO A$ over $A$, we have that $f(g)$ is again a generic element of $\GO A$ over $A$.   
\end{definition}

\begin{remark}
In a supersimple abelian group of rank $1$ with finite $k$-torsion for some $k\ge 2$, the map $x\mapsto k\cdot x$ preserves generically the connected component. 
\end{remark}

\begin{prop}\label{P:skew}
Consider a supersimple abelian group $G$ of rank $1$ such that the connected component exists (\cf Fact \ref{F:G00} (b)) and a definable subset $S$ of $G^2$ of dimension $2$. Let $f_1$ and $f_2$ be two definable functions which preserve  generically the connected component. The  set
\[ 
\left\{ g\in \GO  \ \ | \ \dim(\mathcal X_g)<3 \right\}
\] is finite, where $\mathcal X_g=\mathcal X_g(S,f_1,f_2)$ denotes the set \[ \{(x, y, y') \in G^3 \ | \ (x, y), (x, y'), (x+f_1(g), y) \text{ and } (x, y'+f_2(g)) \text{ all lie in } S\}.\] 
In particular, with $f_1=f_2=\mathrm{Id}_G$, the set 
\[
\left\{ g\in \GO  \ \ | \ \dim(\mathrm{sk}_{\llcorner}(S)_g)<3 \right\}
\]
is finite.
\end{prop}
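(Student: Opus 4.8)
The plan is to follow the template of Proposition \ref{P:Sarkozy} and Theorem \ref{T:Main}. Fix an elementary substructure $M$ over which $G$, $S$, $f_1$ and $f_2$ are all defined, and for $y,a\in G$ write $S_y=\{x:(x,y)\in S\}$ and $S^a=\{y:(a,y)\in S\}$. Since the connected component is absolute, the displayed set $B$ is type-definable over $M$ (it is the intersection of the $M$-definable set $\{g:\dim(\mathcal X_g)<3\}$, see Remark \ref{R:dim}(c), with $\GO{}$). Assume $B$ is infinite; then, by saturation, it contains an element $g$ generic over $M$, exactly as one obtains a generic element of $W\cap\GO M$ in the proof of Proposition \ref{P:Sarkozy}. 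So $g$ is a generic element of $\GO M=\GO{}$ over $M$, whence $g_1:=f_1(g)$ and $g_2:=f_2(g)$ are again generic elements of $\GO{}$ over $M$. The goal is to produce $(a,b,b')\in\mathcal X_g$ with $\dim(a,b,b'/M,g)=3$, which contradicts $g\in B$.

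I will use twice the following consequence of Fact \ref{F:G00}(c): \emph{if $W\subseteq G$ is infinite and definable over a set $A$, and $h\in\GO A$ is generic over $A$, then $\dim\big(W\cap(W-h)\big)=1$}. Indeed, choose $w_0\in W$ with $\dim(w_0/A)=1$ and set $p=\tp(w_0/A)$, a generic type over $A$ containing the formula ``$x\in W$''; Fact \ref{F:G00}(c) yields $w\models p$ generic over $A,h$ with $h+w\models p$, so $w$ lies in the set $W\cap(W-h)$, which is definable over $A,h$, with $\dim(w/A,h)=1$.

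For the construction, let $F=\{y:S_y\text{ infinite}\}$, which is definable over $M$ by Lemma \ref{L:Existsinfty}. Splitting $S$ into the part over $F$ and the part over its complement, the latter has dimension at most $1$ and the former has dimension $\dim F+1$ by additivity of $\dim$; since $\dim S=2$ this forces $\dim F=1$. Pick $b_0\in F$ with $\dim(b_0/M,g)=1$; then $b_0\ind_M g$, so $g_1$ is a generic element of $\GO{}=\GO{M,b_0}$ over $M,b_0$, and the sub-claim applied to $W=S_{b_0}$, $h=g_1$ gives $\dim\big(S_{b_0}\cap(S_{b_0}-g_1)\big)=1$. Choose $a$ generic in this set over $M,b_0,g$: then $(a,b_0),(a+g_1,b_0)\in S$ and $\dim(a,b_0/M,g)=2$, hence $\dim(a/M,g)=1$ and $\dim(b_0/M,a,g)=1$. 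In particular $a\ind_M g$, so $g_2$ is a generic element of $\GO{}=\GO{M,a}$ over $M,a$, and $S^a$ is infinite. The sub-claim applied to $W=S^a$, $h=g_2$ gives $\dim\big(S^a\cap(S^a-g_2)\big)=1$; choose $b'$ generic in this set over $M,a,b_0,g$. Then $(a,b'),(a,b'+g_2)\in S$ as well, so $(a,b_0,b')\in\mathcal X_g$ and
\[ \dim(a,b_0,b'/M,g)=\dim(b'/M,a,b_0,g)+\dim(b_0/M,a,g)+\dim(a/M,g)=1+1+1=3, \]
the desired contradiction. The final assertion is the case $f_1=f_2=\mathrm{Id}_G$.

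The step I expect to demand the most care is the decoupling of the two translation constraints: the $y$-coordinate of a point of $\mathcal X_g$ must lie in $S^a\cap S^{a+g_1}$ and the $y'$-coordinate in $S^a\cap(S^a-g_2)$, and choosing $a$ generic and simultaneously good for both translates fails, since the two corresponding sets of $a$'s have dimension $1$ but need not meet in dimension $1$. The remedy is to realise the $g_1$-constraint first, through a generic point $(a,b_0)$ of $S\cap(S-(g_1,0))$: this automatically makes $\dim S^a=1$ and, crucially, $a\ind_M g$, so that $g_2=f_2(g)$ is still a generic element of the (absolute) connected component over $M\cup\{a\}$, which is what allows the second use of Fact \ref{F:G00}(c). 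One must also check at each stage that $f_i(g)$ stays generic in $\GO{}$ over the base in play, which is precisely what the hypothesis that $f_i$ preserves generically the connected component provides, once one knows $\GO{}=\GO M$.
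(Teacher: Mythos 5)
Your proof is correct and follows essentially the same route as the paper's: both arguments pick $g$ generic in $\GO{}$ over $M$, apply Fact \ref{F:G00} (c) once in the $x$-direction (to a generic fibre $S_{b_0}$ with $b_0\ind_M g$, so that $f_1(g)$ stays generic in the connected component over the enlarged base) and once in the $y$-direction (to $S^a$ with $a\ind_M g$), and conclude by the additive dimension count $\dim(a,b_0,b'/M,g)=3$. Your detour through the set $F$ of points with infinite fibre and the packaged sub-claim $\dim(W\cap(W-h))=1$ are only presentational variants of the paper's direct choice of a generic point $(x_1,y)$ of $S$ independent from $g$.
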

\begin{proof}
Fix an elementary substructure $M$ containing the parameters needed to define the set $S$ as well as the maps $f_1$ and $f_2$. Assume for a contradiction that the set 
\[ 
\left\{ g\in \GO  \ \ | \ \dim(\mathcal X_g)<3 \right\}
\] 
is infinite and choose $g$ in $G^{00}$ generic over $M$ such that $\mathcal X(S, f_1, f_2)_g$ has dimension strictly less than $3$. Since the definable subset $S$ has dimension $\dim(S)=2$, we can choose $(x_1,y)$ in $S$  of dimension $2$ over $M$ with $x_1, y\ind_M g$, so  $\dim(x_1,y/M,g)=2$. 
Symmetry of forking yields that the element $g$ is generic (that is, not algebraic) over $M, x_1, y$. 

By assumption $G^{00}=\GO{M,y}$, so Fact \ref{F:G00} (c) applied to the non-algebraic type $\tp(x_1/M,y)$ and the generic element $f_1(g)$ of $\GO{M,y}$ over $M, y$ yields an element $x$ in $G$ generic over $M, y, g$ such that both $x$ and $x+f_1(g)$ realize $\tp(x_1/M,y)$. Since $S$ is $M$-definable, we deduce by invariance that both pairs $(x,y)$ and $(x+f_1(g),y)$ belong to $S$. Furthermore, we have that 
\[
\dim(x,y/M, g)=\dim(x/M,g,y)+\dim(y/M,g)=1+1=2. 
\]
Our assumption on $f_2$ yields that the element $f_2(g)$ in $G^{00}=\GO{M,x}$ is generic over $M, x$ (by symmetry of non-forking).  Applying now Fact \ref{F:G00} (c) to the non-algebraic type $\tp(y/M, x)$, we can find some element $y'$ generic over $M, x, g$ such that both $y'$ and $y'+f_2(g)$ realize $\tp(y/M,x)$. We may assume that $y'\ind_{M, x, g} y$, so $\dim(y'/M, x, y, g)=1$. 

As before, we deduce that the pairs  $(x,y')$ and $(x,y'+f_2(g))$ belong to $S$, so the triple $(x, y, y')$ lies in  $\mathcal X(S, f_1, f_2)_g$.  By construction, 
\[ \dim(x, y, y'/M, g)= \dim(x,y/M, g) + \dim(y'/M, g, x, y)=2+1=3,\]
 which yields the desired contradiction to our choice of $g$. 
\end{proof}


\section{Some additive configurations in expansions of the integers}\label{S:integers}

Kaplan and Shelah \cite{KS17} showed that the theory of the additive group $\Z$ together with a unary predicate $\PR$ for the prime integers is supersimple of rank $1$, assuming Dickson's conjecture. A similar result was shown unconditionally by Bhardwaj and Tran \cite{BT21} for the theory of the additive group $\Z$ together with a unary predicate $\SF$ for the square-free integers. One of the key aspects of both proofs was a partial quantifier-elimination which allows to reduce the study of definable sets to those of a certain shape. In this section, we will present the main results from \cite{KS17, BT21} (see also \cite{MPP25} for a self-contained exposition of their results) in a unified manner and deduce some consequences of combinatorial nature using the general results of Section \ref{S:SS}. 

In the following, the predicate $P$ denotes either the subset $\PR$ of prime integers (assuming Dickson's conjecture) or the subset $\SF$ of square-free integers. Analogously, given a natural number $n\ge 1$, set 
\[ P_n =\{ k\in n\Z \ | \ k/n \text{ belongs to } P\}  .
\]

\begin{definition}\label{D:basic_subset}
A \emph{constructible} subset of $\Z^{|\bar x|+|\bar y|}$ in the partition $(\bar x; \bar y)$  is a finite union of \emph{basic} subsets $X(\bar x, \bar y)$, where each basic subset is given by some finite subsets of indexes $i$ and $k$, some $\epsilon_{i,k}$'s in $\{-1,1\}$, a finite collection of affine-linear forms \[ t_{i, k}(\bar x, \bar y)=c_{i,k}+\sum_{j=1}^{|\bar x|} m_{i,k,j} x_j + \sum_{\ell=1}^{|\bar y|} m'_{i, k, \ell} y_\ell\ \] with $c_{i,k}$ an integer, both $m_{i, k,j}$ and $m'_{i,k, j}$ in $\Z$ as well as a finite system of congruences $\Sigma(\bar x, \bar y)$ involving some of the above forms such that
\[(\bar x, \bar y) \in X \ \iff \  \Sigma(\bar x, \bar y) \text{ holds with } \bigwedge\limits_{i,k}  t_{i,k}(\bar x, \bar y) \in P_k^{\epsilon_{i, k}} ,\]  with the convention that $P_k^{-1}=\Z\setminus P_k$. 

In an abuse of notation, given a finite tuple $\bar b$ of length $|\bar y|$ in the subset $B$ of parameters, the corresponding subsets of $\Z^{|\bar x|}$ given by $X(\bar x, \bar b)$ are \emph{basic}, resp. \emph{constructible}, subsets defined over $B$. 
\end{definition}
Note that the family of all constructible subsets of $\Z^n$ defined over a subset $B$ is closed under Boolean combinations. 

The following fact contains the essence of the results of \cite{KS17, BT21} for the purposes of this section.
\begin{fact}\textup{(}\cf \cite[Proposition 3.1 \& Theorem 3.3]{MPP25}\textup{)}~\label{F:basics_qe}
\begin{enumerate}[(a)]
    \item In each of the structures $(\Z,0, 1, +, -,\PR)$ (assuming Dickson's conjecture) and $(\Z,0, 1, +, -,\SF)$, the collection of constructible subsets coincide with the parameter-free definable subsets. Hence, the projection of a constructible subset is again constructible.  
    \item The theory of each of the structures $(\Z,0, 1, +, -,\PR)$ (assuming Dickson's conjecture) and $(\Z,0, 1, +, -,\SF)$ is supersimple of rank $1$. In particular, by Lemma \ref{L:Existsinfty}, for every constructible subset $X(x; \bar y)$  there exists some $M_X$ in $\N$ such that for every tuple $\bar b$ in $\Z^{|\bar y|}$, either $X(x, \bar b)$ has size at most $M_X$ or it is infinite. 
    \item Within any given model of each of the  theories of the structures $(\Z,0, 1, +, \PR)$ (assuming Dickson's conjecture) and $(\Z,0, 1, +, \SF)$,  the model-theoretic algebraic closure of a subset $B$ of parameters coincide with the pure closure $\strp {B,1}$ of the subgroup $\sbgp{B, 1}$ generated by $B$ and $1$. 
\end{enumerate}
\end{fact}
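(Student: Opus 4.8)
Item (a) is the crux; items (b) and (c) follow from it by soft model-theoretic arguments, so the plan is to concentrate the number-theoretic effort there. Every constructible set is visibly definable, so for (a) it suffices to prove the converse; and since the constructible subsets of $\Z^n$ over a fixed parameter set already form a Boolean algebra containing all atomic conditions, the whole statement reduces to one closure property: \emph{the projection of a constructible set is constructible}. Concretely, given affine-linear forms $t_{i,k}(x,\bar y)$, exponents $\epsilon_{i,k}\in\{-1,1\}$ and a system of congruences $\Sigma$, the task is to describe by a constructible condition on $\bar y$ the set of tuples $\bar b$ for which some $x$ satisfies $\Sigma(x,\bar b)$ together with $\bigwedge_{i,k} t_{i,k}(x,\bar b)\in P_k^{\epsilon_{i,k}}$.

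First I would apply the Chinese Remainder Theorem to split along the congruence classes of $\bar b$ and of $x$ modulo a large common modulus, fixing thereby the behaviour of every form against any prescribed finite set of ``small'' primes. After this reduction the positive conditions $t_{i,k}(x,\bar b)\in P_k$ (for $P=\PR$) amount to asking that a tuple of linear forms in the single variable $x$ take simultaneously prime values, which --- once the system is \emph{admissible}, i.e. no fixed prime is forced to divide the product --- happens for infinitely many $x$ by Dickson's conjecture; admissibility is itself a congruence condition on $\bar b$, hence constructible. The negative conditions $t_{j,k}(x,\bar b)\notin P_k$ are cheap, since the positive part already leaves infinitely many admissible values of $x$ while each ``$t_{j,k}(x,\bar b)\in P_k$'' cuts out a comparatively sparse set, so one keeps $x$ large enough in the chosen progression to sidestep the finitely many coincidences that matter. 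For $P=\SF$ one argues identically, replacing the appeal to Dickson by the unconditional fact (a routine square-free sieve) that an admissible system of linear forms takes simultaneously square-free values infinitely often --- which is why the $\SF$-case needs no conjectural input. The constructible description of the projection is then the disjunction, over the admissible residue data for $\bar b$, of the associated congruence and admissibility conditions.

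Granted (a), item (c) follows. The inclusion $\strp{B,1}\subseteq\acl(B)$ is immediate: in any model the group is torsion-free (a first-order property), so $nx\in\sbgp{B,1}$ determines $x$ uniquely and thus $x\in\dcl(B,1)$. For the converse I would prove the contrapositive: if no nonzero multiple of $a$ lies in $\sbgp{B,1}$, then $\tp(a/B)$ has infinitely many realisations. Expressing $\tp(a/B)$ by (a) as a constructible condition, the hypothesis says that $a$ is not confined to a coset of a proper $B$-definable finite-index subgroup, and one manufactures further realisations by translating $a$ far away and re-solving the finitely many prime (resp. square-free) conditions present --- once more the simultaneous-solvability input used in (a).

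Finally, item (b): supersimplicity of rank $1$ follows from (a) and (c) by checking, for the relation ``$a\ind_C B$ iff $\dim(a/C)=\dim(a/B)$'' with $\dim$ counting coordinates outside the pure closure, the Kim--Pillay axioms of Fact~\ref{F:Simple_indep}. All are routine from the explicit descriptions of definable sets and of $\acl$ except the independence theorem over a model, which one obtains by amalgamating the constructible solution sets of $\tp(A/M,C)$ and $\tp(A/M,D)$ --- again a Dickson- (resp. sieve-) style simultaneous-solvability argument, now over an algebraically closed base. Rank $1$ is then Remark~\ref{R:Rank1_equiv}: by (a) any formula in one variable defines either a finite set or one meeting every infinite constructible set in an infinite set, so no one-variable formula forks without being algebraic; the uniform bound $M_X$ is then delivered by Lemma~\ref{L:Existsinfty}. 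The sole genuine obstacle in all of this is the number-theoretic ingredient --- simultaneous prime / square-free values of admissible linear systems --- which is precisely where Dickson's conjecture enters, and the only place the square-free variant manages without it.
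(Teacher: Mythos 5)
This statement is quoted as a \emph{Fact} with a citation to \cite{MPP25} (ultimately to \cite{KS17} and \cite{BT21}); the paper contains no proof of it, so there is nothing internal to compare you against. Your overall architecture --- quantifier elimination down to constructible sets via closure under projection, then the description of $\acl$ as pure closure, then verification of the Kim--Pillay axioms with the independence theorem as the one nontrivial item --- does match the cited sources in outline.

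There is, however, a genuine gap in the one step you single out as the crux. Your disposal of the negative conditions $t_{j,k}(x,\bar b)\notin P_k$ by a sparsity/largeness argument does not work. First, Dickson's conjecture only asserts that an admissible system takes simultaneously prime values for \emph{infinitely many} $x$; it gives no density for that set, so you cannot subtract a density-zero set from it and keep anything --- and the ``coincidences'' where an extra form $t_{j,k}(x,\bar b)$ is prime are in general themselves infinite in the chosen progression, not finite as you claim. Second, and worse, in the $\SF$ case the condition ``$t(x)$ is square-free'' is not sparse at all (it has density $6/\pi^2$ along generic progressions), so the negative conditions cannot be sidestepped by taking $x$ large. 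The correct device, as in \cite{KS17,BT21}, is to encode each negative condition as a further \emph{congruence}: force $t_{j,k}(x,\bar b)$ to be divisible by an auxiliary prime $q$ (with $|t_{j,k}(x,\bar b)|\neq q$), resp.\ by $q^2$ in the square-free case, choosing $q$ large enough not to destroy admissibility of the positive system. With that repair your reduction of (a) to simultaneous solvability is sound; the remaining items are sketched at a level consistent with citing the external sources, though the independence theorem over a model --- the real technical content of (b) --- is asserted rather than argued.
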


\begin{notation}
For every $C>0$, there are only finitely many possible basic subsets which can be defined by a formula of \emph{complexity} at most $C$, where the complexity of a formula is computed in terms of its length  in the language $\{0, 1, +, -, P\}$.  
\end{notation}

A straight-forward application of Proposition \ref{P:Sarkozy} (using the above fact) yields the following results: 

\begin{cor}\label{C:Sarkozy_primes}\textup{(}\cf \cite{aS78}\textup{)}~
Assume Dickson's conjecture. For any complexity $C>0$, there are constants $M$ and $N$ in $\N$ such that for any constructible subset $X(x;\bar y)$ in the structure $(\Z, 0, +, \PR)$ of complexity at most $C$ and every choice $\bar b$ of parameters, 
the set \[ \{ a \in X(x, \bar b)\ | \ \ \exists^{\le M} \text{ $p$ in $\PR$ with  $a + (p-1)$ in $X(x, \bar b)$} \}\]
is finite of size at most $N$. 

In particular, if the constructible subset $X(x, \bar b)$ is infinite, then for  all but finitely many $a$'s in $X(x, \bar b)$, there are infinitely many primes $p$ with $a+(p-1)$ in $X(x, \bar b)$. 
\end{cor}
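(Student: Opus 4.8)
The plan is to derive Corollary~\ref{C:Sarkozy_primes} as a direct application of Proposition~\ref{P:Sarkozy} combined with the compactness/uniformity machinery packaged in Fact~\ref{F:basics_qe} (b) and the finiteness of basic subsets of bounded complexity. First I would fix the relevant group: we work in the structure $(\Z, 0, +, \PR)$, which by Fact~\ref{F:basics_qe} (b) has supersimple theory of rank $1$, and with $G$ the additive group itself. The set to be hit is $W=\{p-1 \ |\ p\in \PR\}=\PR-1$, which is constructible (it is a basic set up to the shift by $1$), hence definable. To invoke Proposition~\ref{P:Sarkozy} I must check that $W$ is generically intersective, i.e.\ that $W\cap G^{00}_M$ is infinite for an elementary substructure $M$ over which $W$ is defined. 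Since the connected component of the additive group $\Z$ exists (Fact~\ref{F:G00} (b)) and $G^{00}=\bigcap_{n\ge 1} n\Z$ in any model, it suffices to see that for each $n\ge 1$ there are infinitely many primes $p$ with $p-1\in n\Z$, i.e.\ $p\equiv 1 \pmod n$; this is Dirichlet's theorem on primes in arithmetic progressions (which follows a fortiori from Dickson's conjecture), and genericity of one such $w=p-1$ over $M$ follows because $W\cap G^{00}_M$ being infinite means it is non-algebraic.

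Granting this, Proposition~\ref{P:Sarkozy} applied to $W=\PR-1$ and the constructible set $X$ gives that the set
\[
X_0=\{a\in X \ |\ \text{there are only finitely many } w\in W \text{ with } w+a\in X\}
\]
is finite; rewriting $w=p-1$ this is exactly the asserted set $\{a\in X \ |\ \exists^{\le M} p\in\PR \text{ with } a+(p-1)\in X\}$ for a suitable $M$. The ``in particular'' clause is then immediate from Lemma~\ref{L:Existsinfty} (the $\exists^\infty$ elimination): once there are more than $M$ primes $p$ with $a+(p-1)\in X$, there must be infinitely many.

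The remaining work is purely the uniformity statement — extracting the constants $M$ and $N$ depending only on the complexity bound $C$. Here I would argue by compactness exactly as in the discussion after Definition~\ref{D:basic_subset}: for fixed complexity $C$ there are only finitely many basic (hence finitely many constructible of complexity $\le C$) formulas $X(x;\bar y)$, so it suffices to handle a single formula $X(x;\bar y)$ and produce $M_X, N_X$ working for all parameter choices $\bar b$, then take the maxima. For a fixed $X(x;\bar y)$, apply the $\exists^\infty$-elimination of Lemma~\ref{L:Existsinfty} twice: once to the formula $\psi(w;x,\bar y)$ expressing ``$w\in \PR-1 \wedge x\in X(x,\bar b)\wedge w+x\in X(x,\bar b)$'' with $w$ the single variable, obtaining the threshold $M=k_\psi$ so that the fiber over each $(x,\bar b)$ is either of size $\le M$ or infinite; and once to the (now definable, by Remark~\ref{R:dim} (c) or Lemma~\ref{L:Existsinfty}) set $X_0(x;\bar y)$ of bad initial points, obtaining $N=k_{X_0}$ so that $X_0(x,\bar b)$ is either of size $\le N$ or infinite — and the non-uniform conclusion above rules out the infinite case for each fixed $\bar b$.

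The only genuine point requiring care is the verification of generic intersectivity of $\PR-1$, and more precisely that Dirichlet's theorem (primes $\equiv 1 \bmod n$ for every $n$) is indeed subsumed by Dickson's conjecture or otherwise available in the theory; everything else is bookkeeping of the compactness argument that is already spelled out in the surrounding text. There is a small subtlety in that Proposition~\ref{P:Sarkozy} is phrased for $W\cdot x\in X$ with multiplicative notation and a possibly non-abelian $G$, whereas here $G=\Z$ is additive and commutative, so ``$w\cdot x$'' reads as ``$w+x$'' — this is harmless, but I would note it explicitly so the translation $w=p-1$, $w+a=a+(p-1)$ is transparent.
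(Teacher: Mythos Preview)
Your proposal is correct and follows essentially the same approach as the paper. Both verify via Dirichlet's theorem and a compactness argument that $\PR-1$ is generically intersective in a saturated model, apply Proposition~\ref{P:Sarkozy}, and then extract the uniform constants $M$ and $N$ from the $\exists^\infty$-elimination (Lemma~\ref{L:Existsinfty}, packaged as Fact~\ref{F:basics_qe}(b)) together with the fact that there are only finitely many formulae of bounded complexity.
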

\begin{proof}
By Fact \ref{F:basics_qe}, for every constructible $X(x, \bar y)$ of complexity at most $C>0$, the subset $Z(u; x, \bar y)= X(x+u;\bar y)$ is again definable and thus constructible of complexity $D=O_C(1)$. With the notation of Fact \ref{F:basics_qe} (b),  choose $M=\max(M_Z)$ where $Z$ runs over all possible constructible subsets of complexity at most $D$. Again using Fact \ref{F:basics_qe}, the subset \[ W(x; \bar y)= X(x, \bar y) \cap \exists^{\le M} u \  Z(u, x, \bar y)\] is constructible of complexity $D_1=O_C(1)$, so choose  $N=\max(N_W)$, where $W$ runs over all possible constructible subsets of bounded complexity as above.

By Dirichlet's theorem on arithmetic progressions, given $d\ge 2$, infinitely many primes can be written as $1+dn$ for some $n$ in $\N$, so the definable set $P-1$ intersects the subgroup $d\Z$ in an infinite set. Passing to an elementary extension $G$ of the structure $(\Z, 0, +, P)$, the connected component $\GO A$ does not depend on the parameter set $A$ by Fact \ref{F:G00} (b) and equals $\bigcap_{d\ge 2} dG$. An easy compactness argument yields that the corresponding definable set $P-1$ in $G$ is generically intersective, so we conclude the result by Proposition \ref{P:Sarkozy} and our choice of $M$ and $N$.
\end{proof}

Primes are square-free integers, so Dirichlet's theorem and the proof of Corollary \ref{C:Sarkozy_primes} yields the following result, without assuming Dickson's conjecture. 

\begin{cor}\label{C:Sarkozy_sq}
For any complexity $C>0$, there are constants $M$ and $N$ in $\N$ such that for any constructible subset $X(x;\bar y)$ in the structure $(\Z, 0, +, \SF)$ of complexity at most $C$ and every choice $\bar b$ of parameters, 
the set \[ \{ a \in X(x, \bar b)\ | \ \exists^{\le M} n  \text{  in $\SF$ with  $a + (n-1)$ in $X(x, \bar b)$} \}\]
is finite of size at most $N$. 

In particular, if the constructible subset $X(x, \bar b)$ is infinite, then for  all but finitely many $a$'s in $X(x, \bar b)$, there are infinitely many square-free $n$ with $a+(n-1)$ in $X(x, \bar b)$.  \qed
\end{cor}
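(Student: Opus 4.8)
The plan is to imitate, essentially verbatim, the proof of Corollary~\ref{C:Sarkozy_primes}; the only genuinely new ingredient is the trivial observation that every prime is square-free, so that $\PR-1\subseteq \SF-1$. The crucial structural input is that, by Fact~\ref{F:basics_qe}, the structure $(\Z,0,+,\SF)$ is supersimple of rank $1$ \emph{unconditionally}, so that the entire toolkit of Section~\ref{S:SS} — in particular Proposition~\ref{P:Sarkozy} and Lemma~\ref{L:Existsinfty} — is available without appealing to Dickson's conjecture.

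First I would handle the uniformity bookkeeping exactly as in Corollary~\ref{C:Sarkozy_primes}. Given a constructible $X(x;\bar y)$ of complexity at most $C$, the set $Z(u;x,\bar y)=X(x+u;\bar y)$ is again constructible of complexity $D=O_C(1)$, so Fact~\ref{F:basics_qe} (b) provides a single integer $M=\max(M_Z)$, the maximum taken over the finitely many constructible sets of complexity at most $D$, past which every such fibre is either infinite or of size at most $M$. Then $W(x;\bar y)=X(x,\bar y)\cap \exists^{\le M}u\, Z(u,x,\bar y)$ is constructible of complexity $D_1=O_C(1)$, and $N=\max(N_W)$ over the finitely many constructible sets of complexity at most $D_1$ bounds the size of the relevant set of ``bad initial points'' once we know it is finite.

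It remains to see that $\SF-1$ is generically intersective, the only step that uses a genuine property of $\SF$ — and here the argument is in fact easier than for $\PR$. By Dirichlet's theorem on arithmetic progressions, for every $d\ge 2$ there are infinitely many primes $p$ with $p\equiv 1\pmod d$; since each such $p$ is square-free, $p-1$ lies in $(\SF-1)\cap d\Z$, so $\SF-1$ meets every subgroup $d\Z$ in an infinite set. Passing to a saturated elementary extension $G$ of $(\Z,0,+,\SF)$, the connected component $\GO A$ is independent of $A$ and equals $\bigcap_{d\ge 2}dG$ by Fact~\ref{F:G00} (b), and a routine compactness argument upgrades the previous observation to the statement that the definable set $\SF-1$ in $G$ is generically intersective in the sense of Definition~\ref{D:intersective}. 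Applying Proposition~\ref{P:Sarkozy} with $W=\SF-1$ and the definable set $X(x,\bar b)$ then shows that the set of $a\in X(x,\bar b)$ with only finitely many $n\in\SF$ satisfying $a+(n-1)\in X(x,\bar b)$ is finite; together with the bounds $M$ and $N$ this gives the displayed uniform statement, and the ``in particular'' clause follows at once from Lemma~\ref{L:Existsinfty}. I do not foresee any real obstacle: replacing Dickson's conjecture by Dirichlet's theorem is precisely what makes the square-free case unconditional, and every remaining step is identical to the prime case.
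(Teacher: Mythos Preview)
Your proposal is correct and follows essentially the same route as the paper, which simply records (just before the corollary) that primes are square-free and that the proof of Corollary~\ref{C:Sarkozy_primes} then carries over unconditionally via Dirichlet's theorem. One small remark: the reason the result is unconditional is not that Dirichlet replaces Dickson --- Dirichlet is used in both corollaries --- but rather that the supersimplicity of $(\Z,0,+,\SF)$ in Fact~\ref{F:basics_qe} is established unconditionally, which you in fact note correctly earlier in your write-up.
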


As a consequence of Remark \ref{R:gen_squares} (a), Theorem \ref{T:Main} applies to every constructible subset $X(x, \bar y)$ in the supersimple abelian group $(\Z, 0, +, P)$. In particular, we obtain a stronger version of van der Corput's theorem on 3-AP's, by  Example \ref{E:s1_theories}:  

\begin{cor}\label{C:Main}
Assume Dickson's conjecture. For any complexity $C>0$ and every integer $k\ge 1$, there are constants $M$ and $N$ in $\N$ such that for every constructible subset $X(x;\bar y)$ in the structure $(\Z, 0, +, \PR)$ of complexity at most $C$ and every choice $\bar b$ of parameters, the set 

\[ \left\{a \in X(x, \bar b) \ | \ \exists^{\le M} g  \text{ in  $k\Z$  with $a$, $a+ g$ and $a+2g$ in $X(x, \bar b)$}\right\} 
\]
is finite of size at most $N$. In particular, if the constructible subset $X(x, \bar b)$ is infinite, all but finitely elements in $X$ are the initial points of infinitely many arithmetic progressions in $X$ of length $3$ with distance divisible by $k$.
\end{cor}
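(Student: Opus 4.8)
The plan is to transfer Theorem \ref{T:Main} into the language of constructible sets in $(\Z,0,+,\PR)$ and then harvest the uniform constants $M$ and $N$ by the same definability bookkeeping used in the proof of Corollary \ref{C:Sarkozy_primes}. Work inside a sufficiently saturated model $G$ of the theory of $(\Z,0,1,+,-,\PR)$, which by Fact \ref{F:basics_qe}(b) is a supersimple abelian group of rank $1$. Since $\Z$, hence $G$, is torsion-free, it has no involutions, so by Remark \ref{R:gen_squares}(a) every definable subset of $G$ has generically distinct squares. For the given $k\ge 1$ the subgroup $kG=\{x\in G : k\mid x\}$ is $\emptyset$-definable (by a congruence) and has finite index $k$. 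Thus Theorem \ref{T:Main}, applied with $H=kG$, says that for every constructible $X(x;\bar y)$ and every parameter tuple $\bar b$ the set $\mathrm{B}(X(\cdot,\bar b))=\{a\in X(\cdot,\bar b) : \exists^{<\infty} g\in k\Z \text{ with } a,\, a+g,\, a+2g\in X(\cdot,\bar b)\}$ is finite.

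Next I would manufacture the constants. Given a constructible $X(x;\bar y)$ of complexity at most $C$, the formula $Z(g;x,\bar y)$ expressing ``$k\mid g\ \wedge\ X(x,\bar y)\ \wedge\ X(x+g,\bar y)\ \wedge\ X(x+2g,\bar y)$'' is again constructible: the substitutions $x\mapsto x+g$ and $x\mapsto x+2g$ keep all affine-linear forms with non-negative integer coefficients, and $k\mid g$ fits into the system $\Sigma$; its complexity is $D=O_{C,k}(1)$. By Fact \ref{F:basics_qe}(b) applied to the finitely many such $Z$ (in the partition ``$g\,;\,x,\bar y$''), set $M=\max(M_Z)$. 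Then $W(x;\bar y):=X(x,\bar y)\ \wedge\ \exists^{\le M} g\, Z(g;x,\bar y)$ is constructible of complexity $D_1=O_{C,k}(1)$ --- projections of constructible sets are constructible by Fact \ref{F:basics_qe}(a), and $\exists^{\le M}$ is a bounded Boolean combination of such projections --- and, since $M\ge M_Z$, the quantifiers $\exists^{\le M} g$ and $\exists^{<\infty} g$ agree on $Z$, so $W(G,\bar b)=\mathrm{B}(X(\cdot,\bar b))$ is finite for every $\bar b$. Letting $N=\max(N_W)$ over the finitely many constructible $W$ of complexity at most $D_1$ (again Fact \ref{F:basics_qe}(b)), we obtain $|W(G,\bar b)|\le N$ for all $\bar b$. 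All of this is captured by sentences of the (complete) theory with the numbers $M,N$ fixed, hence holds of the standard model $\Z$, which is precisely the asserted bound on $\{a\in X(x,\bar b) : \exists^{\le M} g\in k\Z \text{ with } a,\, a+g,\, a+2g\in X(x,\bar b)\}$. For the final clause: if $X(x,\bar b)$ is infinite then this finite exceptional set has cofinite complement in $X(x,\bar b)$, and any $a$ outside it admits more than $M\ge M_Z$, hence --- by the $\exists^\infty$-bound --- infinitely many distances $g\in k\Z$ with $a,a+g,a+2g$ in $X(x,\bar b)$.

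The only delicate point is the passage from the pointwise finiteness supplied by Theorem \ref{T:Main} to bounds uniform in $\bar b$; this is carried entirely by the fact that, for bounded complexity, there are only finitely many constructible sets, together with the closure of constructible sets under affine substitution, Boolean combinations and the bounded quantifiers $\exists^{\le M}$ --- the last coming from the $\exists^\infty$-elimination of Lemma \ref{L:Existsinfty} (i.e.\ Fact \ref{F:basics_qe}(b)). No genuinely new obstacle appears, since the substantive content --- finiteness of the bad initial points for each individual parameter --- is already Theorem \ref{T:Main}. Finally, the same argument, now invoking only the unconditional part of Fact \ref{F:basics_qe}, proves the corresponding statement for $(\Z,0,+,\SF)$ with no appeal to Dickson's conjecture.
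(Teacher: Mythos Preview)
Your proof is correct and follows essentially the same approach as the paper: the paper does not give a detailed argument for Corollary~\ref{C:Main} but simply notes (in the sentence preceding the statement) that Remark~\ref{R:gen_squares}(a) makes Theorem~\ref{T:Main} applicable to $(\Z,0,+,P)$, leaving the extraction of the uniform constants $M,N$ implicit by analogy with the proof of Corollary~\ref{C:Sarkozy_primes}. You have spelled out precisely that bookkeeping --- defining the auxiliary constructible sets $Z$ and $W$, invoking the $\exists^\infty$-elimination of Fact~\ref{F:basics_qe}(b), and transferring back to $\Z$ --- exactly as intended.
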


The same statement holds unconditionally in the structure $(\Z, 0, +, \SF)$, without assuming Dickson's conjecture. 

\begin{remark}\label{R:acl_hyperplane}
Fix some ambient model $\UU$ of either the theory $\mathrm{Th}(\Z, 0, +, \PR)$ (assuming Dickson's conjecture) or  $\mathrm{Th}(\Z, 0, +, \SF)$ and a subset $B$ of parameters.  Fact \ref{F:basics_qe} (c) and the definition of the dimension in a supersimple theory of rank $1$ in terms of the matroid given by algebraic closure yield that a $n$-tuple $\bar a$ of $\UU$ does not have full dimension over  $B$  if and only if $\bar a$ belongs to some (non-trivial) \emph{affine hyperplane} defined over $B$, that is, 
\[ \bar a \in \mathcal H_{t(\bar b)}=\bigg\{ \bar x \in \UU^{n} \ | \ \sum_{j=1}^{n} m_j x_j= t(\bar b)\bigg\},\] for integers $m_i$ (not all zero), some affine-linear form $t(\bar y)$ and a finite tuple $\bar b$ in $B$.  

Given $k$ many  affine-linear forms $t_i(\bar y_i)$ and tuples of integers $\bar m_i$, the family of finite unions 
\[ \bigcup_{i=1}^k \mathcal H_{t_i(\bar c_i)}\]
of their associated affine hyperplanes, where each $\bar c_i$ runs over $\UU^{|\bar y_i|}$, is uniformly definable. 
\end{remark}
A straight-forward compactness argument together with Remarks \ref{R:dim} (c) and \ref{R:acl_hyperplane} yields the following result. 
\begin{lemma}\label{L:hyperplanes_dim}
Given a constructible subset $X(\bar x,\bar y)$ of $\Z^{|\bar x|+|\bar y|}$, there are finitely many affine hyperplanes $\mathcal H_{t_i(\bar y)}$ such that for every tuple $\bar b$ in $\Z^{|\bar y|}$ the subset $X(\bar x,\bar b)$ of  $\Z^{|\bar x|}$ has full dimension if and only if \[ X(\bar x,\bar b) \not\subset  \bigcup_{i=1}^k \mathcal H_{t_i(\bar b)}.\]
In particular, if the latter conditions holds for $\bar b$, then $X(\bar x,\bar b)$ is never contained in any finite union of affine hyperplanes defined over $\bar b$.
\qed
\end{lemma}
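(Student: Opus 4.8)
The plan is to combine the definability of the dimension function (Remark \ref{R:dim} (c)) with the description of algebraic closure in these structures (Remark \ref{R:acl_hyperplane}) via a compactness argument. First I would work inside a sufficiently saturated model $\UU$ of the relevant theory and consider the $\bar y$-definable family $Z(\bar y)=\{\bar b \mid \dim(X(\bar x,\bar b))<|\bar x|\}$, which is definable without parameters by Remark \ref{R:dim} (c). For each such $\bar b$, every tuple $\bar a$ in $X(\bar x,\bar b)$ fails to have full dimension over $\bar b$, so by Remark \ref{R:acl_hyperplane} it lies in some affine hyperplane $\mathcal H_{t(\bar b)}$ defined over $\bar b$; by saturation and compactness, finitely many affine-linear forms $t_1,\ldots,t_k$ (with associated tuples of non-negative integer coefficients) suffice, uniformly in $\bar b$, so that $X(\bar x,\bar b)\subseteq\bigcup_{i=1}^k\mathcal H_{t_i(\bar b)}$ for every $\bar b\in Z(\bar y)$.

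The reverse implication is the easy direction: if $X(\bar x,\bar b)$ is contained in a finite union of affine hyperplanes defined over $\bar b$, then each point of $X(\bar x,\bar b)$ satisfies a non-trivial affine-linear equation over $\bar b$, hence is algebraic over $\bar b$ together with fewer than $|\bar x|$ of its own coordinates, so it has dimension strictly less than $|\bar x|$ over $\bar b$; taking the maximum over points of $X(\bar x,\bar b)$ gives $\dim(X(\bar x,\bar b))<|\bar x|$. For the contrapositive statement about full dimension, note that if $X(\bar x,\bar b)$ has full dimension then it is not contained in $\bigcup_{i=1}^k\mathcal H_{t_i(\bar b)}$, and moreover it is not contained in \emph{any} finite union of affine hyperplanes over $\bar b$: a point of $\dim |\bar x|$ over $\bar b$ cannot lie on any non-trivial affine hyperplane defined over $\bar b$, again by Remark \ref{R:acl_hyperplane}.

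The last step is to transfer from $\UU$ back to $\Z$. Since both the family $Z(\bar y)$ and the family $\{\bar b\mid X(\bar x,\bar b)\subseteq\bigcup_i\mathcal H_{t_i(\bar b)}\}$ are definable without parameters, and the two coincide in $\UU$, they coincide in every model of the theory, in particular in $\Z$; here one uses that the relevant notions (dimension of a definable set, containment in a union of hyperplanes) are expressible by first-order formulas in the parameter $\bar b$, which is exactly the content of Remark \ref{R:dim} (c) combined with the uniform definability of finite unions of affine hyperplanes noted at the end of Remark \ref{R:acl_hyperplane}.

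The main obstacle I expect is the compactness extraction of a \emph{finite} and \emph{uniform} list of affine-linear forms $t_1,\ldots,t_k$: a priori the hyperplane witnessing failure of full dimension depends on $\bar b$, and even its defining coefficient tuple $\bar m$ could vary, so one has to argue that by $\aleph_0$-saturation only finitely many coefficient tuples $\bar m_i$ (together with forms $t_i(\bar y)$) arise, using that $X$ is a single constructible set of fixed complexity and that by Remark \ref{R:acl_hyperplane} the hyperplanes in play are of a restricted shape (non-negative integer coefficients, affine-linear forms in $\bar b$). Once that uniformity is in hand, the rest is bookkeeping.
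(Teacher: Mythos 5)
Your proposal is correct and is essentially the paper's own argument: the paper gives no detailed proof, stating only that the lemma follows from ``a straight-forward compactness argument together with Remarks \ref{R:dim} (c) and \ref{R:acl_hyperplane}'', which is exactly what you carry out (inconsistency of the type asserting $\dim(X(\bar x,\bar b))<|\bar x|$, $\bar a\in X(\bar x,\bar b)$ and $\bar a\notin\mathcal H_{t(\bar b)}$ for all forms $t$, followed by transfer via the uniform definability of both conditions). The uniformity worry you flag at the end is already discharged by that compactness step, since only finitely many of the countably many hyperplane conditions are needed to witness the inconsistency.
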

We will now conclude this section with the following result, using  Fact \ref{F:G00} (b), Proposition \ref{P:skew} and the above lemma. 

\begin{cor}\label{C:skew_Z}
Given  a constructible subset $S(z_1,z_2, \bar u)$ in the structure $(\Z,0,+,\PR)$ (assuming Dickson's conjecture), there are:
\begin{itemize}
    \item a natural numbers $k\ge 1$,
    \item finitely many affine hyperplanes $\mathcal H_{t_i(\bar u)}$ of $\Z^2$ and
    \item finitely many affine hyperplanes $\mathcal H_{r_j(\bar u)}$ of $\Z^3$
\end{itemize}
such that for every tuple $\bar b$, whenever $S(z_1, z_2,\bar b)$ is not contained in $\bigcup_i \mathcal H_{t_i(\bar b)}$, then the subset
\[ 
\left\{ g\in k\Z  \ \ | \ \mathcal X_g \subset \bigcup\nolimits_j \mathcal H_{r_j(\bar b)} \right\}
\] is finite, where \[ \mathcal X_g=\{(x, y, y') \in \Z^3 \ | \ (x, y), (x, y'), (x+g, y) \text{ and } (x, y'+g) \text{ all lie in } S(\bar z, \bar b)\} .\]
\end{cor}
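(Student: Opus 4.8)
The plan is to reduce the entire statement, through Lemma~\ref{L:hyperplanes_dim}, to a single uniform finiteness assertion about the definable family $\{g\in k\Z:\dim\mathcal X_g<3\}$, and then to derive that assertion from Proposition~\ref{P:skew} by two successive compactness arguments. First I would fix a sufficiently saturated model $\UU$ of $\mathrm{Th}(\Z,0,+,\PR)$ (Dickson's conjecture being in force); by Fact~\ref{F:G00}~(b) the connected component exists absolutely and equals $\GO{}=\bigcap_{d\ge1}dG$, and by Fact~\ref{F:basics_qe} both $S$ and the family $\mathcal X_g$ are definable. Applying Lemma~\ref{L:hyperplanes_dim} to the constructible set $S(z_1,z_2;\bar u)$ produces the affine hyperplanes $\mathcal H_{t_i(\bar u)}$ of $\Z^2$ with $\dim S(z_1,z_2,\bar b)=2$ precisely when $S(z_1,z_2,\bar b)\not\subset\bigcup_i\mathcal H_{t_i(\bar b)}$; applying it to $\mathcal X$, viewed as a constructible subset of $\Z^3$ with parameters $(g,\bar u)$, produces hyperplanes $\mathcal H_{r_j}$ with $\dim\mathcal X_g=3$ precisely when $\mathcal X_g\not\subset\bigcup_j\mathcal H_{r_j}$. (These may a priori involve $g$, but this is harmless: a proper affine hyperplane of $\Z^3$ has dimension $<3$ by Remark~\ref{R:acl_hyperplane}, so I only need the containment $\{g\in k\Z:\mathcal X_g\subset\bigcup_j\mathcal H_{r_j}\}\subseteq\{g\in k\Z:\dim\mathcal X_g<3\}$ together with the last sentence of Lemma~\ref{L:hyperplanes_dim}.)

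The heart of the matter is then the claim that there exist $k\ge1$ and $N\in\N$ with $|\{g\in k\Z:\dim\mathcal X_g<3\}|\le N$ for every $\bar b$ satisfying $\dim S(z_1,z_2,\bar b)=2$. Here I would invoke Proposition~\ref{P:skew} with $f_1=f_2=\mathrm{Id}_G$ (which preserves generically the connected component, and the connected component exists): for each such $\bar b$ it gives that $\GO{}\setminus\mathcal Y_{\bar b}$ is finite, where $\mathcal Y_{\bar b}=\{g\in G:\dim\mathcal X_g(S(\cdot,\bar b))=3\}$ is definable uniformly in $\bar b$ by Remark~\ref{R:dim}~(c). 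Since $\GO{}=\bigcap_d dG$, a first compactness argument yields, for each such $\bar b$, some $D_{\bar b}\ge1$ and $n_{\bar b}\in\N$ with $|D_{\bar b}G\setminus\mathcal Y_{\bar b}|\le n_{\bar b}$. The set $\{\bar b:\dim S(z_1,z_2,\bar b)=2\}$ is definable (again Remark~\ref{R:dim}~(c)), hence closed in the logic topology, and it is covered by the definable sets $\{\bar b:|DG\setminus\mathcal Y_{\bar b}|\le n\}$ as $(D,n)$ ranges over the positive integers times $\N$; a second compactness argument then extracts a finite subcover, with moduli $D_1,\dots,D_m$ and bounds $n_1,\dots,n_m$. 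Taking $k=\operatorname{lcm}(D_1,\dots,D_m)$ and $N=\max_i n_i$, and using $kG\subseteq D_iG$, establishes the claim.

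To finish, for $\bar b$ with $S(z_1,z_2,\bar b)\not\subset\bigcup_i\mathcal H_{t_i(\bar b)}$ --- equivalently $\dim S(z_1,z_2,\bar b)=2$ --- the containment noted above and the claim show that $\{g\in k\Z:\mathcal X_g\subset\bigcup_j\mathcal H_{r_j(\bar b)}\}$ has at most $N$ elements; since all the ingredients are first-order and parameter-free, this descends from $\UU$ to $\Z$, where the equivalences in play are exactly those of Lemma~\ref{L:hyperplanes_dim}. For the concluding assertion, given any affine-linear forms $s_j(\bar u)$, the last sentence of Lemma~\ref{L:hyperplanes_dim} shows that whenever $\mathcal X_g$ has full dimension it is contained in no finite union of affine hyperplanes over its parameters, in particular not in $\bigcup_j\mathcal H_{s_j(\bar b)}$; so the set of $g\in k\Z$ with $\mathcal X_g\subset\bigcup_j\mathcal H_{s_j(\bar b)}$ is contained in $k\Z\setminus\mathcal Y_{\bar b}$ and again has size at most $N$. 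I expect the main obstacle to be precisely this passage from Proposition~\ref{P:skew}, which controls only the type-definable subgroup $\GO{}$ lying inside every $dG$, to one definable modulus $k$ working uniformly over all $\bar b$: the first compactness step upgrades cofiniteness in $\bigcap_d dG$ to cofiniteness in some finite-index subgroup $D_{\bar b}G$, and the second, relying on the definability of dimension from Remark~\ref{R:dim}~(c), trades the $\bar b$-dependent moduli for their least common multiple. The remaining bookkeeping with the two hyperplane families and the $\UU$-to-$\Z$ transfer is routine.
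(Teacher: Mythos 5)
Your proposal is correct and follows essentially the same route as the paper: reduce the hyperplane-containment statements to dimension statements via Lemma \ref{L:hyperplanes_dim}, apply Proposition \ref{P:skew} (with $f_1=f_2=\mathrm{Id}$ and $\GO{}=\bigcap_d dG$ from Fact \ref{F:G00}(b)) in a saturated model, and use compactness to extract a uniform modulus $k$ and a uniform finiteness bound. The only difference is organizational: the paper runs a single compactness argument on the decreasing chain $W\cap Y_1\supset W\cap Y_2\supset\cdots$, where $Y_k$ simultaneously encodes the modulus $k!$ and the count $k$, whereas you split this into a per-parameter compactness step followed by a finite-subcover argument; both yield the same conclusion.
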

There is an analogous unconditional result for constructible subsets in the structure  $(\Z,0,+,\SF)$, without assuming Dickson's conjecture. 
\begin{proof}
 For the constructible subset $S(z_1, z_2, \bar u)$, consider the corresponding constructible subset \[ 
 X(x, y, y'; v, \bar u)= (x, y) \in S \land  (x, y') \in S \land (x+v, y) \in S\land  (x, y'+v) \in S.
 \] 
 Choose finitely many affine hyperplanes $\mathcal H_{t_i(\bar u)}$ of $\Z^2$ for $S$ and $\mathcal H_{r_j(\bar u)}$ of $\Z^3$ for $X$ as in Lemma \ref{L:hyperplanes_dim}. For every $k$ in $\N$ the constructible set 
 \[ 
 Y_k(\bar u)= \exists v_1 \cdots \exists v_k \in k!\Z \text{  with $v_\ell\neq v_{\ell'}$ and each }  X(x, y, y', v_\ell, \bar u)\subset \bigcup\nolimits_j \mathcal H_{r_j(\bar u)}
 \]
witnesses that there are at least $k$ many $g$'s in $k!\Z$ with $X(x, y, y', g, \bar u)$ of dimension strictly less than $3$. Similarly, let $W(\bar u)$ be the constructible subset expressing that $S(\bar z, \bar u)$ is not contained in $\bigcup\nolimits_i \mathcal H_{t_i(\bar u)} $. 
If no  member in the decreasing chain 
\[   
W \cap Y_{1} \supset \cdots \supset  W\cap Y_{k} \supset \cdots  
\] 
is the empty set, then a standard compactness argument produces a tuple $\bar b$  in some elementary extension $G$ of the structure $(\Z,0,+,\PR)$ and infinitely many $g$'s in \[ \bigcap_k k!G \stackrel{\text{\ref{F:G00} (b)}}{=} \GO{}\] with $\dim(S(x_1, x_2, \bar b))=2$ yet $\dim(\mathcal X_g)<3$, contradicting Proposition \ref{P:skew}. Hence for some $k\ge 1$ in $\N$, the set $W$ is contained in the complement of $Y_k$, which yields the desired result (replacing $k$ with $k!$ in the statement).  

The last assertion in the statement follows directly from Lemma \ref{L:hyperplanes_dim}. 
\end{proof}

\section{Some additive configurations in (pseudo-)finite fields}\label{S:psf}

A \emph{pseudo-finite field} is an infinite model of the common theory of all finite fields. Ax's work on the elementary theory of finite fields \cite[Section 8]{jA68} yields that the axioms expressing being pseudo-finite are elementary and the completions are given by the isomorphism type of the relative algebraic closure of the prime field. 

In the seminal work of Chatzidakis, van den Dries and Macintyre \cite{CDM92}, the asymptotic theory of finite fields was studied from a model-theoretic point of view within Shelah's classification spirit. In particular, they showed that every pseudofinite field in the language of rings has a supersimple theory of rank $1$ \cite[Proposition 4.5]{CDM92}. 
Moreover, they used Lang-Weil estimates together with a partial quantifier elimination to conclude their striking result (see Fact \ref{F:CDM} below). 

Whilst there is a partial quantifier elimination valid for all finite fields \cite[Lemma 2.9]{CDM92}, we nevertheless believe it will be more convenient to work with the notion of definable sets, instead of formulating the results for \emph{basic/constructible} sets as in Section \ref{S:integers}. Similar to the use of complexity in the previous section, we will refer to the complexity of the definable sets in terms of the length of the corresponding $\LL_{\rm ring}$-formula as a sequence of symbols (\cf \cite[p. 30]{tT15}). Note that there are only finitely many formulae of a given complexity.

\begin{fact}\label{F:CDM}\textup{(}\cite[Main Theorem 3.7 \& Proposition 3.8]{CDM92}\textup{)}
For every complexity $C>0$, there are finitely many values $(d, \mu)$ with $d$ in $\N$ and $\mu$ in $\Q^{>0}$ such that the following holds. 
\begin{enumerate}[(a)]
    \item For every $\LL_{\rm ring}$-formula $\varphi(x_1,\ldots, x_n;y_1,\ldots, y_m)$ of complexity at most $C$ and every $m$-tuple $b$ in the finite field $k$, whenever the set $\varphi(k^n, b)$ is not empty, then there is a pair $(d, \mu)$ with $0\le d\le n$ and 
\[ 
\big|\varphi(k^n, b) \big|= \mu |k|^d   + O_C\big(|k|^{d-\frac{1}{2}}\big). 
\]  
\item For every such pair $(d, \mu)$ there is a formula $\psi(y_1,\ldots, y_m)$ of complexity $O_C(1)$ such that for every finite field $k$, an $m$-tuple $b$ satisfies $\psi$ if and only if the (cardinality of the) non-empty set $\varphi(k^n, b)$ satisfies the above equation. 
\item  For an $\LL_{\rm ring}$-formula $\varphi(x;y_1,\ldots, y_m)$ of complexity at most $C$, there are constants $\ell_C$ in $\N$ and $0<\delta_C\le 1$  such that \[ \text{either }  \big|\varphi(k, b) \big|\le \ell_C \text{ or }  \big|\varphi(k, b) \big|\ge \delta_C |k|\]  for every $m$-tuple $b$ in the finite field $k$.
\end{enumerate} 
\end{fact}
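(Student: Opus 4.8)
The plan is to prove Fact \ref{F:CDM}, or rather the part that our later arguments need, by reducing everything to the Lang--Weil estimates for absolutely irreducible varieties over finite fields and then propagating these estimates through a partial quantifier elimination for the theory of finite fields. I would begin by recalling the Lang--Weil bound in the form: if $V\subset \mathbb{A}^n$ is an absolutely irreducible variety of dimension $d$ defined over $\mathbb{F}_q$ by polynomials of bounded degree, then $|V(\mathbb{F}_q)|=q^d+O(q^{d-1/2})$, where the implied constant depends only on $n$, $d$ and the degrees. The first real step is to pass from closed irreducible varieties to constructible sets: writing a constructible set as a finite Boolean combination of Zariski-closed sets, decomposing each closed set into its absolutely irreducible components (keeping track that the number and degrees of these components are bounded in terms of the defining data, which is a standard effective Noetherianity / B\'ezout argument), and handling the non-absolutely-irreducible components by a Galois-descent argument showing their $\mathbb{F}_q$-points are negligible (of size $O(q^{d-1/2})$ or living in strictly lower dimension). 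This yields the estimate of part (a) for constructible sets, with the pair $(d,\mu)$ read off as the top dimension and the number of top-dimensional absolutely irreducible components defined over the prime field, up to the Galois action.

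Next I would invoke the partial quantifier elimination for finite fields (\cite[Lemma 2.9]{CDM92}): every $\LL_{\mathrm{ring}}$-formula is, over all finite fields simultaneously, equivalent to a Boolean combination of formulas of the shape ``$\exists y\, (P(x,y)=0 \wedge y\neq 0)$'' together with quantifier-free formulas --- more precisely, definable sets are, uniformly, finite unions of differences of images of varieties under coordinate projections, intersected with divisibility-type conditions expressed by the solvability of one-variable polynomial equations. The point is that projections of constructible sets are again handled by the fibration argument: a projection $\pi\colon X\to \mathbb{A}^m$ with $X$ constructible of bounded complexity has image whose generic fibre dimension is constant on a bounded-complexity stratification, so $|X(\mathbb{F}_q)|$ and $|\pi(X)(\mathbb{F}_q)|$ are each governed by the Lang--Weil count on the pieces. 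Running the Boolean combination through these estimates, and observing that only boundedly many pairs $(d,\mu)$ can arise because the complexity bound $C$ caps the number of strata, their dimensions, and the degrees of all the varieties involved, gives part (a) in full and the definability of the condition ``$|\varphi(k^n,b)|$ has growth type $(d,\mu)$'' for part (b), since that condition is a Boolean combination of finitely many polynomial-solvability conditions on $b$, hence itself $\LL_{\mathrm{ring}}$-definable of complexity $O_C(1)$. Part (c) is then immediate: for $n=1$ the only possible values of $d$ are $0$ and $1$; if $d=0$ the set has size at most the bounded number $\ell_C$ of solutions of the relevant polynomials, and if $d=1$ then $\mu\geq \mu_{\min}=:\delta_C>0$ where $\mu_{\min}$ is the smallest positive numerator appearing among the finitely many pairs $(1,\mu)$, so $|\varphi(k,b)|\geq \delta_C|k| + O_C(|k|^{1/2})\geq \tfrac{1}{2}\delta_C|k|$ for $|k|$ large, and one absorbs the finitely many small fields into $\ell_C$.

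The main obstacle, and the place where real work is hidden, is the uniformity of the algebraic-geometry input: one must ensure that when a bounded-complexity formula is unwound into varieties, projections, and components, every numerical invariant in sight --- the number of irreducible components, their degrees, the number of strata needed to make fibre dimension constant, the codimension of the bad locus where absolute irreducibility fails --- is bounded purely in terms of $C$ and $n$, not in terms of $q$ or the particular coefficients. This is exactly the content that makes the implied constants $O_C(\cdot)$ legitimate and the set of pairs $(d,\mu)$ finite, and it is the technical heart of \cite{CDM92}; I would treat it by effective elimination theory (bounds on the degrees of the polynomials produced by resultant-based projection, and effective primary decomposition), citing the relevant estimates rather than reproving them. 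I would emphasise that in our paper we only ever use Fact \ref{F:CDM} as a black box --- specifically the dichotomy (c) and the definability (b) --- so a reasonable exposition here is simply to state that the result is \cite[Main Theorem 3.7 \& Proposition 3.8]{CDM92} and sketch, as above, how Lang--Weil plus partial quantifier elimination plus effective bounds combine to yield it, referring the reader to \cite{CDM92} for the details.
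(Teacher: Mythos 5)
This statement is imported verbatim from \cite{CDM92} as a Fact, and the paper gives no proof of it, so your decision to treat it as a black box and merely sketch the Lang--Weil plus partial quantifier elimination argument is exactly what the paper does. Your sketch is a fair outline of the actual proof in \cite{CDM92} --- the only imprecision worth flagging is that the rational density $\mu$ arises from a Chebotarev-style count over the Galois groups of the one-variable polynomials produced by the partial quantifier elimination, not merely from counting top-dimensional absolutely irreducible components --- and your deduction of (c) from (a) and (b) by separating $d=0$ from $d=1$ and absorbing the finitely many small fields into $\ell_C$ is correct.
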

The existence of the pairs of constants $(d, \mu)$ follows from the study of the corresponding definable set in the ultraproduct of finite fields. In fact, given a definable set $X\subset K^n$, where $K$ is a pseudo-finite field, the value $d$ equals the dimension $\dim(X)$ of $X$, as in Remark \ref{R:dim}, and $\mu$ equals its \emph{density} when $n=1=\dim(X)$ (see \cite[p.130]{CDM92} for further details). 

Running over all pairs $(d, \mu)$ with $d<n$ in Fact \ref{F:CDM} (b), one deduces the following result:
\begin{cor}\label{C:CDM_maxdim}
For an $\LL_{\rm ring}$-formula $\varphi(x_1,\ldots, x_n;y_1,\ldots, y_m)$ of complexity at most $C$, there exists a formula $\rho(y_1,\ldots, y_m)$ of complexity $O_C(1)$ and some value $0<\delta_C\le 1$ such that for every finite field $k$, 

\begin{align*}
    \rho(k) & =\left\{ b \in k^m \ | \ |\varphi(k^n, b)| =\mu |k|^d+ O_C(|k|^{d-\frac{1}{2}})  \text{ for some $0\le d< n$ and $\mu>0$} \right\}   \\& =\left\{ b \in k^m \ | \ |\varphi(k^n, b)|< \delta_C|k|^n \right\}     .
\end{align*}
\end{cor}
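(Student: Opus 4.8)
The plan is to build $\rho$ out of the finitely many ``normalising'' formulae supplied by Fact \ref{F:CDM}. Let $\mathcal P_C$ be the finite set of pairs $(d,\mu)$, with $0\le d\le n$ and $\mu\in\Q^{>0}$, that Fact \ref{F:CDM} attaches to complexity $C$ uniformly in $k$; for each $(d,\mu)\in\mathcal P_C$ write $\psi_{(d,\mu)}(\bar y)$ for the formula of complexity $O_C(1)$ from Fact \ref{F:CDM}(b), which holds of $b$ exactly when $\varphi(k^n,b)$ is non-empty and $|\varphi(k^n,b)|=\mu|k|^d+O_C(|k|^{d-1/2})$. I would then set
\[
\rho(\bar y)\ :=\ \neg\exists\bar x\,\varphi(\bar x,\bar y)\ \lor\ \bigvee_{(d,\mu)\in\mathcal P_C,\ d<n}\psi_{(d,\mu)}(\bar y).
\]
As $\mathcal P_C$ is finite, $\rho$ is a disjunction of boundedly many formulae of complexity $O_C(1)$, so it has complexity $O_C(1)$. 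The first of the two displayed descriptions is then essentially forced by the definition: by Fact \ref{F:CDM}(a) a non-empty fibre meets the cardinality equation of at least one pair, and by Fact \ref{F:CDM}(b) it meets the equation of some pair with $d<n$ precisely when one of the disjuncts $\psi_{(d,\mu)}$, $d<n$, holds, while the empty fibre -- of dimension $0<n$ -- is caught by the first disjunct and, under the natural reading, belongs to the left-hand description as well.

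For the second equality the key point is that the finiteness of $\mathcal P_C$ pins the densities down from both sides. Put $\mu_{\max}:=\max\{\mu:(d,\mu)\in\mathcal P_C\}$, and, if some pair has $d=n$, put $\mu_n:=\min\{\mu:(n,\mu)\in\mathcal P_C\}>0$ and $\delta_C:=\mu_n/2$; otherwise put $\delta_C:=1$. Suppose first that $|k|$ is large in terms of $C$. If $b\in\rho(k)$ then either $\varphi(k^n,b)=\emptyset$, so $|\varphi(k^n,b)|=0<\delta_C|k|^n$, or $|\varphi(k^n,b)|=\mu|k|^d+O_C(|k|^{d-1/2})$ with $d<n$ and $\mu\le\mu_{\max}$, giving $|\varphi(k^n,b)|=O_C(|k|^{n-1})<\delta_C|k|^n$. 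Conversely, if $b\notin\rho(k)$ then the fibre is non-empty and meets no $d<n$ equation, so by Fact \ref{F:CDM}(a) its governing pair has $d=n$ and $|\varphi(k^n,b)|\ge\mu_n|k|^n-O_C(|k|^{n-1/2})\ge\delta_C|k|^n$; if there were no pair with $d=n$, the same reasoning shows that $b\notin\rho(k)$ is impossible for $|k|$ large. Hence the two sets coincide for all $k$ with $|k|\ge N_C$, some $N_C=O_C(1)$.

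Finally I would absorb the finitely many finite fields $k$ with $|k|<N_C$ -- there are $O_C(1)$ of them, of size $O_C(1)$ -- by a bounded correction: each $\{b\in k^m:|\varphi(k^n,b)|<\delta_C|k|^n\}$ is a finite subset of $k^m$, hence defined by an explicit $\theta_k(\bar y)$ of complexity $O_C(1)$, and ``the field has exactly $|k|$ elements'' is a sentence $\sigma_k$ of complexity $O_C(1)$, so replacing $\rho$ by $(\rho\wedge\bigwedge_k\neg\sigma_k)\vee\bigvee_k(\sigma_k\wedge\theta_k)$ over these exceptional fields keeps the complexity $O_C(1)$ and enforces the equalities everywhere. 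The only genuine ingredient is the finiteness of $\mathcal P_C$ from Fact \ref{F:CDM}, which bounds the dimension-$n$ densities away from $0$ and the lower-dimensional ones from above. I expect the sole mild nuisance to be the small-field bookkeeping, together with the convention for the empty fibre, neither of which is a real obstacle since $N_C=O_C(1)$.
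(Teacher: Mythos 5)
Your proposal is correct and is essentially the paper's argument: the paper's one-line proof (``running over all pairs $(d,\mu)$ with $d<n$ in Fact \ref{F:CDM}~(b)'') is exactly your disjunction $\bigvee_{d<n}\psi_{(d,\mu)}$, and your choice of $\delta_C$ from the minimal dimension-$n$ density is the intended way to get the second equality. The extra care you take with the empty fibre and the finitely many small fields (where Galois-invariance of the target set makes the parameter-free $\theta_k$ available) only fills in bookkeeping the paper leaves implicit.
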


\subsection*{Roth's theorem on $3$-AP's for (pseudo-)finite fields}\label{S:3AP_psf}

Using a standard ultraproduct construction (often referred to as \emph{pseudo-finite yoga}), we obtain a finitary version of Theorem \ref{T:Main}. 

\begin{theorem}\label{T:Roth_fte}
Given a complexity $C>0$ and natural numbers $r$ and $s$, there is a constant $t=t(C, r, s)$ in $\N$ such that for every finite field $k$, every definable additive subgroup $H$ of $k$ of index at most $s$ and every definable subset $X$ of $k$, whenever both $X$ and $H$ have complexity at most $C$, then the set of \emph{bad} initial points
\[ 
\mathrm{B}_{r}(X)=\left\{x\in X\ | \ \exists^{\le r} g \text{ in $H$ with $x, x+g$ and $x+2g$ in $X$} \right\}
\] 
has size at most $t$.  Moreover, there are two  constants $\eta=\eta(C,s)>0$ in $\N$ and $\delta=\delta(C)>0$ such that whenever $|X|\ge \eta$, the subset  
\[ 
X_1=\left\{x\in X\ | \ \exists^{\ge  \delta|k|} g  \text{ in $H$ with $x, x+g$ and $x+2g$ in $X$} \right\}
\] has cardinality at least $\delta |k|$. 
\end{theorem}

\begin{proof}
 For every two formulae  $\varphi(x,y)$ and $\vartheta(z,y)$
both of complexity at most $C$, the formula 
\[ 
\psi(z,x,y)= \varphi(x, y)\land \varphi(x+z, y)\land \varphi(x+2z, y) \land \vartheta(z, y) 
\] 
has complexity at most $D=O_C(1)$. Fact \ref{F:CDM} (c) yields constants $\ell_D$ and $\delta_D$ (only depending on $C$) such that for fixed $x$ and every $|y|$-tuple $b$ of a finite field $k$, the cardinality of the set $\psi(k, x, b)$ is either bounded by $\ell_D$ or is at least $\delta_D\cdot|k|$. Now, the formulae \[ \theta_r(x, y)= \varphi(x, y) \land \exists^{\le r} z \psi(z, x, y) \ \text{ and } \ \chi_r=\varphi\land\neg \theta_r \] have both complexity at most $D_1=D_1(C, r)$, so Fact \ref{F:CDM} (c) again yields the corresponding values $\ell_{D_1}$ and $\delta_{D_1}$ for $\theta_r$ and $\chi_r$.

In order to show the first assertion in the theorem, it suffices to show that for every fixed complexity $C>0$ and every natural numbers $r$ and $s$, there exists a value $n_0$ in $\N$ such that for every finite field $k$ of size at least $n_0$, the statement holds with $t=\ell_{D_1}$. Indeed, it then suffices to set $t'$ the maximum of $t$ and $n_0$.  

Assume now otherwise, so there are natural numbers $r$ and $s$ and some complexity $C>0$ such for every natural number $n$ there exists a finite field $k_n$ of size $|k_n|\ge n$ as well as an $\LL_{\rm ring}$-definable additive subgroup $H_n$ of index at most $s$ and an $\LL_{\rm ring}$-definable subset $X_n$ of $k_n$, given respectively by the instances $\vartheta_n(z, b_n)$ and $\varphi_n(x, b_n)$ for some tuple $b_n$ in $k_n$ of two formulae of complexity at most $C$, such that the corresponding set $\mathrm{B}_{r}(X_n)$ in $k_n$ has size at least $\ell_{D_1}+1$. It follows then from Fact \ref{F:CDM} (c) that the cardinality of $\mathrm{B}_{r}(X_n)$ is at least $\delta_{D_1}|k_n|$.

Let $\mathcal U$ be a non-principal ultrafilter on $\N$. As there are only finitely many formulae of complexity at most $C$, there exists a fixed single formula $\varphi(x, y)$ such that for $\mathcal U$-almost all $n$ in $\N$, the set $X_n$ is defined by $\varphi(x,b_n)$. Similarly, we may assume that the subgroup $H_n$ is defined by the formula $\vartheta(z,b_n)$ for $\mathcal U$-almost all $n$ in $\N$.

In the pseudo-finite field $K=\prod_{n\to \mathcal U} k_n$ consider the tuple $b=[(b_n)_{n\in \N}]_{\mathcal U}$ of $K^{|y|}$. Denote now by $H$ the definable additive subgroup $\vartheta(K, b)$ and by $X$ the definable set $\varphi(K,b)$. By \L o\'s's Theorem, the definable subset $\mathrm{B}_{r}(X)$ of $K$ is infinite,  and thus so is $X$. Moreover, the subgroup $H$ of $K$  has finite index at most $s$. 

Note that the characteristic of $K$ cannot be $2$. Otherwise, since $K$ is a finite union of cosets of $H$, there exists some $x_0$ in $\mathrm{B}_{r}(X)$ with $H\cap \mathrm{B}_{r}(X)-x_0$ infinite, so  $H\cap X-x_0$ is infinite as well. Any element $g$ in the latter intersection yields that both $x_0+g$ and $x_0=x_0+2g$ belong to $X$ as well, which contradicts the choice of $x_0$ in $\mathrm{B}_{r}(X)$. 

As a consequence, the subset $X$ of the additive group $K^+$ has generically distinct squares by Remark \ref{R:gen_squares} (a). Theorem \ref{T:Main} yields that the set 
\[ 
\mathrm{B}(X)=\left\{x\in X\ | \ \exists^{<\infty} g \text{ in $H$ with $x, x+g$ and $x+2g$ in $X$} \right\}
\]
is finite. Since the infinite set $\mathrm B_r(X)$ is clearly contained in $\mathrm B(X)$, we obtain the desired contradiction. This finishes the first part of the proof.

In order to show the \textquoteleft moreover clause\textquoteright, let now $k$ be a finite field and $\varphi(x,y)$ a formula of complexity at most $C$. Given a tuple $b$ in $k$, denote by $X$ the subset $\varphi(k,b)$.  It follows from the first part of the statement that there exists some value $t$ for $r=\ell_D$ (so $t$ only depends on $C$ and $s$) such that the corresponding subset $\mathrm B_{\ell_D}(X)$ has size at most $t$. 

Consider the corresponding formulae $\theta_{\ell_D}$ and $\chi_{\ell_D}$. Notice that their complexity $D_1=D_1(C)$ only depends on the complexity $C$, and similarly we have that the value $\delta=\min(\delta_D, \delta_{D_1})>0$ only depends on $C$. Fact \ref{F:CDM} (c) applied to the formula $\theta_{\ell_D}$ of complexity at most $D_1$ yields that $|\theta_{\ell_D}(k, b)|\le \ell_{D_1}$ whenever
\[ 
|k|\ge \frac{t+1}{\delta}\ge \frac{t+1}{\delta_{D_1}}.
\] 
The set $X$ is the disjoint union of $\theta_{\ell_D}(k, b)$ and $\chi_{\ell_D}(k, b)$. Clearly, the subset $X_1$  contains $\chi_{\ell_D}(k, b)=X\setminus \mathrm{B}_{\ell_D}(X)$ (again by Fact \ref{F:CDM} (c) applied to the formula $\psi$). Thus, if $|X|\ge 2\ell_{D_1}+1$, so is $|\chi_{\ell_D}(k, b)|\ge \ell_{D_1}+1$ and thus 

\[ 
|X_1|\ge |\chi_{\ell_D}(k, b)|\ge \delta_{D_1} |k|\ge \delta|k|,
\]  as desired. Therefore, it suffices to take $\eta=\max(2\ell_{D_1}+1,\frac{t+1}{\delta})$. 
\end{proof}

\subsection*{S\'ark\"ozy's theorem for (pseudo-)finite fields}\label{S:Sarkozy_psf}

In \cite{aS78}, S\'ark\"ozy showed that a subset of $\N$ with positive upper density contains two distinct elements whose difference is a perfect square. An alternative proof was given by Furstenberg \cite{hF77} using ergodic theory. Now, the subgroup of non-zero squares in a pseudo-finite field has finite multiplicative index, so it is generically intersective \cite[Lemma 3.2]{PSW98} as in the statement of Proposition \ref{P:Sarkozy}, which allows us to obtain an asymptotic version of Proposition \ref{P:Sarkozy} for finite fields. We would like to thank Haydar G\"oral for pointing out the possibility of a connection between S\'ark\"ozy's theorem and our techniques.

\begin{theorem}\label{T:Sarkozy_subgroup_fte}
Given a complexity $C>0$, there are constants  $r_C$ and $t_C$ in $\N$ such that for every finite field $k$ of size at least $n_0$ and every $\LL_{\rm ring}$-definable subsets $X$ and $H$ of $k$ of complexity at most $C$ with $H$ a multiplicative subgroup of $k^\times$,  either $H$ has size at most $\ell_C$ (as in Fact \ref{F:CDM} (c)) or
the set
\[ 
\left\{x\in X\ | \ \exists^{\le r_C} h \text{ in $H$ with $x+h$ in $X$} \right\}
\] has size at most $t_C$.  Moreover, there is a constant $\delta=O_C(1)>0$  such that whenever  $|X|\ge 2t_C+1$ and $|H|\ge \ell_C+1$, the subset  
\[ 
X_1 = \left\{x\in X\ | \ \exists^{\ge  \delta|k|} h \text{ in $H$ with $x+h$ in $X$} \right\}
\] has cardinality at least $\delta |k|$. In particular there exists some $h$ in $H$ with $x+h$ in $X$ for all $x$ in $X_1$.  
\end{theorem}

\begin{proof}
The proof is an immediate application of Proposition \ref{P:Sarkozy} and pseudo-finite yoga as in the proof of Theorem \ref{T:Roth_fte} considering the formulae 
\[ 
\psi(z,x,y)= \varphi(x, y)\land \varphi(x+z, y)\land \xi(w, y)
\] 
as well as  
\[ 
\theta(x, y)= \varphi(x, y) \land \exists^{\le \ell_D} z \psi(z, x, y) 
\text{ and  } \chi=\varphi\land \neg \theta,
\] 
where the definable subsets $X$ and $H$ are given by instances of the formulae $\varphi(x, y)$ and $\xi(w, y)$ respectively. Indeed, if in a given finite field $k$ the subgroup $H=\xi(k, b)$ has size at least $\ell_C+1$, then Fact \ref{C:CDM_maxdim} (c) yields that the multiplicative index of $H$ in $k^\times$ is at most $\delta_C$. Thus, following the lines of the proof of Theorem \ref{T:Roth_fte}, we would have that the corresponding multiplicative definable subgroup in the ultraproduct has finite index and so it is generically intersective by  \cite[Lemma 3.2]{PSW98} (see also \cite[Lemma 2.4]{MPP04}), which gives the desired contradiction together with Proposition \ref{P:Sarkozy}.
\end{proof}

In contrast to the above, we need to impose additional conditions when the set of \emph{distances} of elements of $X$ is not contained in a multiplicative subgroup.  Whenever the characteristic in large enough, we will get a similar result for arbitrary sets. Indeed, as the additive group of a field of characteristic $0$ is divisible, Fact \ref{F:G00} (b) yields that in every pseudo-finite field $K$ of characteristic $0$, the connected component of the additive group $K^+$ exists and equals $K$ itself. In particular, every infinite definable subset of $K$ will be immediately generically intersective. This observation yields an asymptotic result at the finitary level if the characteristic of the field is sufficiently large.

\begin{theorem}\label{T:Sarkozy_fte}
Given a complexity $C>0$, there are constants $n_0$,  $r_C$ and $t_C$ in $\N$ such that for every finite field $k$ of characteristic at least $n_0$ and every $\LL_{\rm ring}$-definable subsets $X$ and $W$ of $k$ of complexity at most $C$, either $W$ has size at most $\ell_C$ (as in Fact \ref{F:CDM} (c)) or the set
\[ 
\left\{x\in X\ | \ \exists^{\le r_C} w \text{ in $W$ with $x+w$ in $X$} \right\}
\] has size at most $t_C$.  Moreover, there is a constant $\delta=O_C(1)>0$  such that whenever  $|X|\ge 2t_C+1$ and $|W|\ge \ell_C+1$, the subset  
\[ 
X_1 = \left\{x\in X\ | \ \exists^{\ge  \delta|k|} w \text{ in $W$ with $x+w$ in $X$} \right\}
\] has cardinality at least $\delta |k|$. 
\end{theorem}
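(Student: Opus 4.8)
The plan is to run the ultraproduct argument of Theorem \ref{T:Roth_fte} almost verbatim, replacing the appeal to Theorem \ref{T:Main} by Proposition \ref{P:Sarkozy} and using that the characteristic of the ultraproduct is forced to be $0$. Fix the finitely many $\LL_{\rm ring}$-formulae of complexity at most $C$; say $X$ and $W$ are given by instances of $\varphi(x,\bar y)$ and $\varphi'(w,\bar z)$. Besides the auxiliary formula $\psi(w,x,\bar y,\bar z)=\varphi'(w,\bar z)\wedge\varphi(x+w,\bar y)$, I would introduce
\[ \theta(x,\bar y,\bar z)=\varphi(x,\bar y)\wedge\exists^{\le\ell_D}w\,\psi(w,x,\bar y,\bar z), \qquad \chi=\varphi\wedge\neg\theta, \]
all of complexity $O_C(1)$, where $\ell_D$ is the constant of Fact \ref{F:CDM} (c) for $\psi$; then $\theta$ defines $\mathrm{B}_{r_C}(X)$ once we set $r_C=\ell_D$. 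Applying Fact \ref{F:CDM} (c) to $\varphi'$, to $\theta$ and to $\chi$ produces further constants $\ell_C,\ell_{D_1}$ and $\delta_C,\delta_D,\delta_{D_1}>0$, all $O_C(1)$; put $t_C=\ell_{D_1}$.

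Suppose the first assertion fails with these constants for every choice of $n_0$. Then for each $n$ there is a finite field $k_n$ of characteristic at least $n$ and instances $X_n,W_n$ of the above type with $|W_n|\ge\ell_C+1$ but $|\mathrm{B}_{r_C}(X_n)|\ge t_C+1$. Pass to a non-principal ultraproduct $K=\prod_{n\to\mathcal U}k_n$, pinning down fixed formulae $\varphi,\varphi'$ that occur $\mathcal U$-often; then $K$ is a pseudo-finite field of characteristic $0$, with definable sets $X=\varphi(K,b)$ and $W=\varphi'(K,c)$. By Fact \ref{F:CDM} (c) applied to $\varphi'$ and to $\theta$, the bounds $|W_n|\ge\ell_C+1$ and $|\mathrm{B}_{r_C}(X_n)|\ge\ell_{D_1}+1$ upgrade to $|W_n|\ge\delta_C|k_n|$ and $|\mathrm{B}_{r_C}(X_n)|\ge\delta_{D_1}|k_n|$, both unbounded since $|k_n|\ge n$; hence $W$ and $\mathrm{B}_{r_C}(X)$ are both infinite in $K$. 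Fix an elementary substructure $M$ of $K$ containing $b$ and $c$. Since $K^+$ is divisible, Fact \ref{F:G00} (b) gives $\GO M=K$, so $W$ is generically intersective, and Proposition \ref{P:Sarkozy} shows that $\{x\in X\ |\ \exists^{<\infty}\,w\in W\text{ with }x+w\in X\}$ is finite. As this set trivially contains $\mathrm{B}_{r_C}(X)$, we reach the desired contradiction; hence there is $n_0=O_C(1)$ for which the first assertion holds.

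For the \textquoteleft moreover clause\textquoteright, I would argue as in Theorem \ref{T:Roth_fte}: put $\delta=\min(\delta_D,\delta_{D_1})>0$. Assuming $|X|\ge 2t_C+1$ and $|W|\ge\ell_C+1$ (so the first assertion applies and $|\mathrm{B}_{r_C}(X)|\le t_C$), the disjoint decomposition $X=\theta(k,b)\sqcup\chi(k,b)$ gives $|\chi(k,b)|\ge|X|-t_C\ge t_C+1\ge\ell_{D_1}+1$, whence $|\chi(k,b)|\ge\delta_{D_1}|k|\ge\delta|k|$ by Fact \ref{F:CDM} (c). Finally $\chi(k,b)\subseteq X_1$: for $x\in\chi(k,b)$ the set $\{w\in W\ |\ x+w\in X\}$ has more than $\ell_D$ elements, so Fact \ref{F:CDM} (c) applied to $\psi$ with $x$ fixed upgrades this to at least $\delta_D|k|\ge\delta|k|$. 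Thus $|X_1|\ge|\chi(k,b)|\ge\delta|k|$.

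The one genuinely new point compared with Theorem \ref{T:Roth_fte} is the passage to characteristic $0$: it is what guarantees $\GO M=K$, hence the hypothesis of Proposition \ref{P:Sarkozy} that $W$ be generically intersective, and this is exactly why the statement restricts to fields of characteristic at least $n_0$ rather than merely of cardinality at least $n_0$. The remaining work is routine bookkeeping — keeping track that $X$ and $W$ may come from different complexity-$\le C$ formulae (handled by finiteness of that set), that all the constants depend only on $C$, and that a field of characteristic $\ge n_0$ automatically has at least $n_0$ elements.
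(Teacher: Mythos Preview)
Your proof is correct and follows essentially the same approach as the paper's own proof, which is only a sketch pointing to Proposition~\ref{P:Sarkozy} and the pseudo-finite yoga of Theorem~\ref{T:Roth_fte}. Your version is in fact slightly cleaner: you use separate parameter tuples $\bar y,\bar z$ for $X$ and $W$ (the paper's sketch conflates them), and you close the contradiction via the trivial inclusion $\mathrm{B}_{r_C}(X)\subseteq\{x\in X:\exists^{<\infty}w\in W,\ x+w\in X\}$, whereas the analogous passage in the proof of Theorem~\ref{T:Roth_fte} unnecessarily verifies the reverse inclusion as well. One cosmetic point: since $\theta$ and $\chi$ depend on both $\bar y$ and $\bar z$, write $\chi(k,b,c)$ rather than $\chi(k,b)$.
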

In particular,  for a non-constant polynomial $f(\t)$ over the finite field $k$ of degree at most $D$, the image $W=f(k)$ is a definable subset of complexity bounded in terms of $D$. Whenever $|k|\ge D\cdot \ell_C+1$, the subset $W$ cannot have size at most $\ell_C$. Thus, the above statement yields a S\'ark\"ozy-like statement restricted to definable subsets.  
\begin{proof}
The proof follows from a straightforward adaptation of the proof above, considering again the formulae 
\[ 
\psi(z,x,y)= \varphi(x, y)\land \varphi(x+z, y)\land \xi(w, y)
\] 
as well as  
\[ 
\theta(x, y)= \varphi(x, y) \land \exists^{\le \ell_D} z \psi(z, x, y) 
\text{ and  } \chi=\varphi\land \neg \theta,
\] 
where now the definable subsets $X$ and $W$ are given by instances of the formulae $\varphi(x, y)$ and $\xi(w, y)$ respectively. 
\end{proof}

\subsection*{Skew-corners in (pseudo-)finite fields} 
In this last subsection we prove that Proposition \ref{P:skew} and Corollary \ref{C:CDM_maxdim} allow us to extract a finitary version for pseudo-finite fields using pseudo-finite yoga, for which we will include again all the details for the sake of the presentation.

As noted in the previous subsection on S\'ark\"ozy's theorem, in every pseudo-finite field $K$ of characteristic $0$, the connected component of the additive group $K^+$ exists and equals $K$ itself. In particular, every non-constant polynomial $P(\t)$ preserves generically the connected component, since the element $g$ and its image $P(g)$ are interalgebraic over the coefficients of $P$.  
\begin{theorem}\label{T:Skew_fte}
Given a complexity $C>0$, there are constants $n_0$ and $t_C$ in $\N$ as well as  $\delta_C>0$ and $\delta'_C>0$ in $\Q$ such that for every finite field $k$ of characteristic at least $n_0$, every $\LL_{\rm ring}$-definable subset $S$ of $k^2$ of complexity at most $C$ and non-constant polynomials $P_1$ and $P_2$ of degree at most $C$ with coefficients in $k$, 
\[
\text{either } |S|<\delta_C |k|^2 \quad\text{or the set}\quad 
\left|\left\{g \in k \ | \  |\mathcal X_g|<\delta'_C|k|^3  \right\}\right| \le t_C,
\] 
where \[ \mathcal X_g =\{(x, y, y') \in k^3 \ | \ (x, y), (x, y'), (x+P_1(g), y) \text{ and } (x, y'+P_2(g)) \text{ all lie in } S\}. \] 
\end{theorem}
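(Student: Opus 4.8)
The plan is to deduce Theorem \ref{T:Skew_fte} from Proposition \ref{P:skew} together with Fact \ref{F:CDM} and Corollary \ref{C:CDM_maxdim} via the same pseudo-finite yoga used in the proofs of Theorems \ref{T:Roth_fte} and \ref{T:Sarkozy_fte}. First I would observe that, for a formula $\varphi(x,y,w)$ of complexity at most $C$ defining $S$ and for the two polynomials $P_1,P_2$ of degree at most $C$ (whose coefficients we absorb into the parameter tuple), the formula
\[
\chi(g,x,y,y',w)=\varphi(x,y,w)\land\varphi(x,y',w)\land\varphi(x+P_1(g),y,w)\land\varphi(x,y'+P_2(g),w)
\]
has complexity bounded by some $D=O_C(1)$, and it defines, for fixed $g$ and parameters, the set $\mathcal X_g\subseteq k^3$. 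Applying Corollary \ref{C:CDM_maxdim} to $\chi$ (in the variables $(x,y,y')$, so $n=3$) produces a complexity-$O_C(1)$ formula $\rho(g,w)$ and a threshold $\delta'_C>0$ such that $|\mathcal X_g|<\delta'_C|k|^3$ holds exactly when $g$ lies in $\rho(k,w)$, i.e.\ exactly when $\dim\mathcal X_g<3$ in the ultraproduct. Likewise, Corollary \ref{C:CDM_maxdim} applied to $\varphi$ itself (with $n=2$) gives a threshold $\delta_C>0$ distinguishing $|S|<\delta_C|k|^2$ from $|S|\ge\delta_C|k|^2$, the latter being equivalent to $\dim S=2$ in the ultraproduct. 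Finally, the set $\{g\in k : g\in\rho(k,w)\}$ is itself defined by a complexity-$O_C(1)$ formula in the single variable $g$, so Fact \ref{F:CDM} (c) supplies constants $\ell_C$ and $\delta''_C$: this set is either of size at most $\ell_C$ or of size at least $\delta''_C|k|$; I set $t_C=\ell_C$.

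Now I argue by contradiction in the usual way. Suppose the conclusion fails for some complexity $C$; then for every $n$ there is a finite field $k_n$ with $\mathrm{char}(k_n)\ge n$, a definable $S_n\subseteq k_n^2$ and polynomials $P_{1,n},P_{2,n}$ of degree $\le C$ with $|S_n|\ge\delta_C|k_n|^2$ yet more than $t_C=\ell_C$ values $g$ with $|\mathcal X_g|<\delta'_C|k_n|^3$. By Fact \ref{F:CDM} (c) that exceptional set of $g$'s then has size at least $\delta''_C|k_n|$. Pigeonholing the finitely many formulae of complexity $\le C$ (and the finitely many degree-$\le C$ polynomial shapes, whose coefficients I carry as parameters) along a non-principal ultrafilter $\mathcal U$ on $\N$, I pass to $K=\prod_{n\to\mathcal U}k_n$, a pseudo-finite field of characteristic $0$, with a single formula $\varphi$, parameter $b=[(b_n)]_{\mathcal U}$, polynomials $P_1,P_2$ with coefficients in $K$, and the definable set $S=\varphi(K^2,b)$. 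By \L o\'s, $\dim S=2$ (since $|S_n|\ge\delta_C|k_n|^2$ forces, via Corollary \ref{C:CDM_maxdim}, the top value $d=2$), and the set $\{g\in K : \dim\mathcal X_g<3\}$ is infinite (as $\delta''_C|k_n|\to\infty$). Since $\mathrm{char}(K)=0$, the additive group $K^+$ is divisible, so by Fact \ref{F:G00} (b) the connected component $\GO{}$ exists and equals $K$, and each non-constant $P_i$ preserves generically the connected component because $g$ and $P_i(g)$ are interalgebraic over the (finitely many) coefficients. Thus Proposition \ref{P:skew} applies and says the set $\{g\in\GO{} : \dim\mathcal X_g<3\}=\{g\in K : \dim\mathcal X_g<3\}$ is \emph{finite}, contradicting its infinitude. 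This contradiction proves the first dichotomy, after enlarging $t_C$ and imposing a lower bound $n_0$ on the characteristic exactly as in the proof of Theorem \ref{T:Roth_fte} to absorb the finitely many small fields.

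For the quantitative $\delta'_C$-clause: once we know that for $\mathrm{char}(k)\ge n_0$ and $|S|\ge\delta_C|k|^2$ all but at most $t_C$ values $g$ satisfy $\dim\mathcal X_g=3$ (in the sense of Corollary \ref{C:CDM_maxdim}, i.e.\ $g\notin\rho(k,b)$), Fact \ref{F:CDM} (a) applied to $\chi$ gives $|\mathcal X_g|=\mu|k|^3+O_C(|k|^{5/2})$ with $\mu$ ranging over a fixed finite set of positive rationals; taking $\delta'_C$ to be, say, half the minimum such $\mu$ and shrinking $n_0$ accordingly ensures $|\mathcal X_g|\ge\delta'_C|k|^3$ for those $g$, which is the stated bound. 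The one point needing a little care — and the main obstacle — is the bookkeeping of the polynomials $P_1,P_2$: their degrees are bounded by $C$ but their coefficients vary with $k$, so one must make sure the composite formula $\chi$ genuinely has complexity $O_C(1)$ uniformly (writing $P_i(g)$ out using its degree-$\le C$ monomials with the coefficients appearing as extra parameters $w$), and that in the ultraproduct $P_1,P_2$ remain non-constant — which holds since "$P_i$ is non-constant of degree $\le C$'' is an elementary condition on the coefficient tuple, preserved by \L o\'s. Everything else is the now-standard transfer between Fact \ref{F:CDM} and the rank-$1$ dimension theory of the ultraproduct.
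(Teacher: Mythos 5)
Your proposal is correct and follows essentially the same route as the paper's proof: encoding the polynomial coefficients as extra parameters in the formula $\chi$, applying Corollary \ref{C:CDM_maxdim} to $\chi$ and to $\varphi$, using Fact \ref{F:CDM}~(c) on the resulting exceptional-set formula to obtain $t_C$ and the lower bound $\delta''_C|k|$, and then running the ultraproduct contradiction against Proposition \ref{P:skew}, with divisibility in characteristic $0$ and interalgebraicity of $g$ with $P_i(g)$ supplying the hypotheses on the connected component. The only cosmetic remark is that your final paragraph re-derives what Corollary \ref{C:CDM_maxdim} already packages (the equivalence of $|\mathcal X_g|<\delta'_C|k|^3$ with failure of full dimension), and ``shrinking $n_0$'' should read ``enlarging $n_0$''.
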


\begin{proof}
Notice that a polynomial $P(\t)=\sum_{i=0}^C a_i \t^i$ of degree at most $C$ can be identified with its tuple $\bar a$ of coefficients. In particular, the collection of all non-constant polynomials of degree at most $C$ is an $\LL_\mathrm{ring}$-definable subset of $k^{C+1}$.  We will use in some formulae the compact notation $\bar a(g)$ for $P(g)=\sum\nolimits_{i=0}^C a_i g^i$.

Assume that an arbitrary $S$ is defined by an instance of the formula $\varphi(x,y,\bar z)$, of complexity $C$. The formula 
\[ 
\chi(x,y,y',v, \bar z,\bar u_1,\bar u_2)= \varphi(x, y,\bar z)\land \varphi(x, y',\bar z)\land \varphi(x+ \bar u_1(v), y,\bar z)\land\varphi(x, y'+\bar u_2(v),\bar z)
\] 
 has complexity $D=O_C(1)$ and describes the corresponding set $\mathcal X_v$ in each finite field. By Corollary \ref{C:CDM_maxdim}, there exists an $\LL_\mathrm{ring}$-formula $\rho'( v, \bar z,\bar u_1, \bar u_2)$ and a constant $\delta'_C$ associated to $\mathcal X_v$ such that the realizations of $\rho'$ consist of the parameters $(g, \bar c, P_1, P_2)$ for which the cardinality of the corresponding set $\mathcal X_g$ does not have full dimension. Furthermore, the complexity of $\rho'$ is $D_1=O_C(1)$.

Similarly, let $\rho(\bar z)$ and $\delta_C$ be the $\LL_\mathrm{ring}$-formula and the constant associated to $\varphi(x, y, \bar z)$. Fact \ref{F:CDM} (c) yields some constants $\ell_{D_1}$ and $\delta_{D_1}$ (only depending on $C$) for the formula $\rho'(v; \bar z, \bar u_1, \bar u_2)$ such that for every tuple $\bar c$ of a finite field $k$ and non-constant polynomials $P_1$ and $P_2$ over $k$ (modulo the above identification), the cardinality of the set $\rho'(v, \bar c, P_1, P_2)$ is either bounded by $\ell_{D_1}$ or is at least $\delta_{D_1}\cdot|k|$. Set now $t_C=\ell_{D_1}$. 

Assume for a contradiction that the statement of the theorem does not hold.  For a fixed complexity $C>0$ choose  for every natural number $n$ a finite field $k_n$ of characteristic at least $n$ as well as an $\LL_{\rm ring}$-definable subset $S_n$ of $k_n^2$, given by an instance $\varphi_n(x, y, c_n)$ for some $|\bar z|$-tuple $c_n$ in $k_n$ of a formula of $\varphi_n(x, y, \bar z)$ of complexity at most $C$ as well as two non-constant polynomials $P_{1,n}$ and $P_{2,n}$ of degree at most $C$ over $k_n$ such that the set $S_n$ has cardinality at least $\delta_C |k_n|^2$, where $\delta_C$ is as in the above paragraph, yet the set 
\[ 
\mathcal J_n=\left\{g \in k_n \ | \  |\mathcal X_g(k_n)|<\delta'_C|k|^3 \right\}
\] 
has size at least $t_C+1$, where $t_C=\ell_{D_1}$ and \[  \mathcal X_g(k_n) =\{ (x, y, y') \in k_n^3 \, | \, (x, y), (x, y'), (x+P_{1, n}(g), y) \text{ and } (x, y'+P_{2,n}(g)) \text{ lie in } S_n \}.\] 

Let $\mathcal U$ be a non-principal ultrafilter on $\N$. As in the proof of Theorem \ref{T:Roth_fte}, there is a fixed single formula $\varphi(x, y, \bar z)$ such that for $\mathcal U$-almost all $n$ in $\N$, the set $S_n$ is defined by $\varphi(x,y, \bar c_n)$. By construction and Corollary \ref{C:CDM_maxdim}, we have that $\bar c_n$ does not satisfy the associated formula $\rho(\bar z)$ to $\varphi$. 
Furthermore, Fact \ref{F:CDM} (c) and  Corollary \ref{C:CDM_maxdim} yields that the set 
\[
\mathcal J_n=\rho'(k_n, \bar c_n, P_{1,n},P_{2,n}).
\] 
has cardinality at least $\delta_{D_1}|k_n|$. 

Consider the pseudo-finite field $K=\prod_{n\to \mathcal U} k_n$ and set $\bar c=[(\bar c_n)_{n\in \N}]_{\mathcal U}$ in $K^{|\bar z|}$ as well as  the definable set $S=\varphi(K^2,\bar c)$. Let also  $P_1(\t)$ and $P_2(\t)$ be the polynomials over $K$ of degree at most $C$ given by the corresponding sequences $[(P_{1,n}(\t))_{n\in\N}]_{\mathcal U}$ and $[(P_{2,n}(\t))_{n\in \N}]_{\mathcal U}$. By \L o\'s's Theorem, the set $S$ has dimension $2$ by the above discussion and Corollary \ref{C:CDM_maxdim}, since $\bar c$ does not satisfy $\rho(\bar z)$. 

\L o\'s's Theorem again yields that $K$ has characteristic $0$ and hence is divisible, so $K$ equals its additive connected component by Fact \ref{F:G00} (b). For every non-constant polynomial $P(\t)$ with coefficients in the subfield $L$ of $K$, every element $b$ is interalgebraic with $P(b)$ over $L$. In particular, both (non-constant) polynomial functions $P_1(\t)$ and $P_2(\t)$ preserve generically the additive connected component. Moreover, if $g=[(g_n)_{n\in \N}]_{\mathcal U}$, the set $\mathcal X_g(K^3)=\prod_{n\to \mathcal U} \mathcal X_{g_n}(k^3_n)$.
By Proposition \ref{P:skew}, the set $\mathcal J=\{g \in K \ |  \ \dim(\mathcal X_g)<3\}$ is finite.  We will obtain the desired contradiction if we show that $\mathcal J=\prod_{n\to \mathcal U} \mathcal J_{n}$. Indeed, an element $g=[(g_n)_{n\in \N}]_{\mathcal U}$ belongs to $\mathcal J$ if and only if $g$ realizes the formula $\rho'(v, \bar c, P_1, P_2)$ in $K$, which by \L o\'s's Theorem  is equivalent to requiring that $g_n$ belongs to $\mathcal J_n$ for $\mathcal U$-almost all $n$, as desired.
\end{proof}


\begin{thebibliography}{99}


\bibitem{jA68} J. Ax, \emph{The elementary theory of finite fields}, Ann. Math. {\bf 88}, (1968), 239--271.

    \bibitem{BT21} N. Bhardwaj and C-M. Tran, \emph{The additive groups of $\mathbb {Z}$ and $\mathbb {Q}$ with predicates for being square-free}, J. Symb. Log. {\bf 86}, (2021), 1324--1349. 

\bibitem{tB25} T. Boege, \emph{The entropy profiles of a definable set over finite fields}, preprint (2025), \url{https://arxiv.org/abs/2502.20355}. 

\bibitem{pC90}
P. Cameron, \emph{Oligomorphic permutation groups}, 
London Mathematical Society Student Texts. 45. Cambridge: Cambridge University Press. x, 220 p. (1999).  

\bibitem{eC11} E. Casanovas, \emph{ Simple theories and hyperimaginaries}, Lecture Notes in Logic {\bf 39}. Cambridge: Cambridge University Press; Ithaca, NY: Association of Symbolic Logic (ASL) (ISBN 978-0-521-11955-9/hbk). xiv, 169 p. (2011). 

\bibitem{gC21} G. Conant, \emph{Stability in a group}, Groups Geom. Dyn. {\bf 15}, (2021), 1297--1330.

\bibitem{CPT20}  G. Conant, A. Pillay and C. Terry, \emph{A group version of stable regularity}, Math. Proc. Camb. Philos. Soc. {\bf 168}, (2020), 405--413. 

\bibitem{CP24} G.  Conant and A. Pillay, \emph{An analytic version of stable arithmetic regularity}, preprint, (2024), \url{http://arxiv.org/abs/2401.14363}. 

\bibitem{CDM92} Z. Chatzidakis, L. van den Dries and A.  Macintyre,  \emph{Definable sets over finite fields}, J. Reine Angew. Math. {\bf 427}, (1992), 107--135.

\bibitem{jC39} 
 J. G . van der Corput, \emph{\"Uber Summen von Primzahlen und Primzahlquadraten}, Math. Ann. {\bf 116} (1939), p. 1--50.

 \bibitem{hF77} H. Furstenberg, \emph{Ergodic behavior of diagonal measures and a theorem of Szemer\'edi on arithmetic progressions}, J. Analyse Math, {\bf 71} (1977), 204--256. 

\bibitem{jG05} J. Gagelman, \emph{Stability in geometric theories}, Ann. Pure  Appl. Logic
{\bf 132}, (2005), 313--326. 

\bibitem{bG05}  B. Green, \emph{Roth's theorem in the primes}, Ann.  Math. {\bf 161}, (2005), 1609--1636.

\bibitem{GT08}
B. Green and T. Tao, \emph{The primes contain arbitrarily long arithmetic progressions}, Ann. of Math. {\bf 167} (2008), p. 481--547.

\bibitem{eH03} E. Hrushovski, \emph{Pseudofinite fields and related structures} in  Model theory and applications
 (B\'elair. Chatzidakis. D'Aquino. Otero. Point, and Wilkie. editors), Quaderni di Matem\'atica {\bf 11}, (2003), 
 Univ. Napoli.

\bibitem{eH12} E. Hrushovski, \emph{Stable group theory and approximate
	subgroups}, J. AMS {\bf 25}, (2012), 189--243.

\bibitem{HP94} E. Hrushovski and A. Pillay, \emph{Groups definable in local
fields
and pseudo-finite fields}. Israel J. Math. {\bf 85}, (1994),  203--262.

\bibitem{JLO24} M. Jaber, S. Lovett and A. Ostuni, \emph{Strong bounds for skew corner-free sets}, Discrete Anal. 2025, Paper No. 20, 27 pp.

\bibitem{KS17}
I. Kaplan and S. Shelah, \emph{Decidability and classification of the theory of integers with primes}, 
J. Symb. Log. {\bf 82} (2017), 1041--1050.


\bibitem{KP97} B. Kim and A. Pillay, \emph{Simple theories}, Ann.
  Pure Appl. Logic {\bf 88}, (1997), 149--164.

\bibitem{tL14}
T. H. L\^e, \emph{Problems and results on intersective sets}, In: Nathanson, M. (eds) Combinatorial and Additive Number Theory, (2014), Springer Proceedings in Mathematics \& Statistics, vol 101. Springer, New York, NY. 

\bibitem{MPP24} 
A. Martin-Pizarro and D. Palac\'in, \emph{Complete type amalgamation for non-standard finite groups}, Model Theory {\bf 3} (2024), 1--37.

\bibitem{MPP25} A. Martin-Pizarro and D. Palac\'in, \emph{An exposition on the supersimplicity of certain expansions of the additive group of the integers}, preprint, (2025), \url{http://arxiv.org/abs/2502.03915}.

\bibitem{MPPW21} A. Martin-Pizarro, D. Palac\'in and J. Wolf, \emph{A
	model-theoretic note on the Freiman-Ruzsa theorem},  Sel. Math. {\bf 27}, (2021), Paper No. 53, 19 pp.

\bibitem{MPPW24}
A. Martin-Pizarro, D. Palac\'in and J. Wolf, \emph{Stability, corners, and other 2-dimensional shapes}, Canad. J. Math. (2025) Published online, \url{https://doi:10.4153/S0008414X25101971}.

\bibitem{MPP04} A. Martin-Pizarro and A. Pillay, \emph{Elliptic and 
hyperelliptic 
curves over supersimple fields}, J. London Math. Soc. {\bf 69} 
(2004), 1--13.

\bibitem{lM24} L. Mili\'cevi\'c, \emph{Good bounds for sets lacking skew corners}, Combinatorica 46 (2026), Paper No. 2, 32 pp. 


\bibitem{PSW98} A. Pillay, T. Scanlon and F. O. Wagner,
  \emph{Supersimple fields and division rings}, Math. Res. Lett. {\bf
    5}, (1998), 473--483.

\bibitem{PS24} 
A. Pillay and A. Stonestrom, \emph{An arithmetic algebraic regularity lemma}, preprint, (2024), \url{https://arxiv.org/abs/2412.11206}.

\bibitem{kP24} K. Pratt, \emph{On generalized corners and matrix multiplication},  In \emph{15th Innovations in Theoretical Computer Science Conference (ITCS 2024)},  LIPIcs {\bf  287}, (2024), Article 89, 17pp. 


\bibitem{kR53} K. Roth, \emph{On certain sets of integers}, 
		J. London Math. Soc. {\bf 28},  104--109 (1953).
        
\bibitem{aS78} A. S\'ark\"ozy, \emph{On difference sets of sequences of integers. III}, Acta Math. Acad. Sci. Hungar. {\bf 31},  (1978), 355--386. 

\bibitem{tS17}  T. Sanders, \emph{Solving $xz=y^2$ in certain subsets of finite groups}, Q. J. Math. {\bf 68} (2017), 243--273.

\bibitem{sS80} S. Shelah, \emph{Simple unstable theories}, Ann. Math. Logic {\bf 19} (1980), 177--203.


    \bibitem{sS90} S. Shelah, \emph{Classification Theory and the number of non-isomorphic models}, 
Studies in Logic and the Foundations of Mathematics {\bf 92}, (1990),  Amsterdam: North-Holland. xxxiv, 705 p.

\bibitem{eS75} E. Szemer\'edi, \emph{On sets of integers 
containing no $k$ elements in arithmetic progression}, 
Acta Arith. {\bf 27}, (1975), 199--245.

\bibitem{tTblog}
T. Tao, \emph{A spectral theory proof of the algebraic regularity lemma}. Blog-
post, (2013), \url{https://terrytao.wordpress.com/tag/algebraic-regularity-lemma/}.

\bibitem{tT15}
T. Tao, \emph{Expanding polynomials over finite fields of large characteristic, and a regularity lemma for definable sets}, Contrib. Discrete Math. 10 (2015), 22-98. 


\bibitem{TZ23} 
T. Tao and T. Ziegler, \emph{Infinite partial sumsets in the primes}, J. Anal. Math. {\bf 151}, (2023),  375--389.

\bibitem{TZ12} K. Tent and M. Ziegler, \emph{A course in model theory}, Lecture Notes in Logic {\bf 40}, (2012),  Cambridge: Cambridge University Press, x, 248 p.

\bibitem{TW19} C. Terry and J. Wolf, \emph{Stable arithmetic regularity in the finite field model}, Bull. London Math. Soc., {\bf 51}, (2019), 70--88.

\bibitem{fW00} F. O. Wagner, \emph{Simple theories}, Mathematics and its Applications {\bf 503}, (2000), Dordrecht: Kluwer Academic Publishers. xi, 260 p.  

\end{thebibliography}
\end{document}